\DeclareFontShape{T1}{lmr}{bx}{sc} { <-> ssub * cmr/bx/sc }{}
\theoremstyle:=definition,remark,plain\do{%
        \expandafter\g@addto@macro\csname th@\theoremstyle\endcsname{%
            \addtolength\thm@preskip\parskip
            }%
        }
\pgfplotsset{compat=newest}
\pgfplotsset{
        colormap={parula}{
            rgb255=(53,42,135)
            rgb255=(15,92,221)
            rgb255=(18,125,216)
            rgb255=(7,156,207)
            rgb255=(21,177,180)
            rgb255=(89,189,140)
            rgb255=(165,190,107)
            rgb255=(225,185,82)
            rgb255=(252,206,46)
            rgb255=(249,251,14)
        },
}
\let\oldbibliography\thebibliography
\renewcommand{\thebibliography}[1]{%
  \small
  \oldbibliography{#1}%
  \setlength{\itemsep}{0pt}%
}
\numberwithin{equation}{section}
\numberwithin{table}{section}
\numberwithin{figure}{section}
\newtheorem{theorem}{Theorem}[section]
\newtheorem{prop}[theorem]{Proposition}
\newtheorem{lemma}[theorem]{Lemma}
\crefname{lemma}{lemma}{lemmata}
\Crefname{lemma}{Lemma}{Lemmata}
\crefname{corollary}{corollary}{corollaries}
\Crefname{corollary}{Corollary}{Corollaries}
\newtheorem{assumption}[theorem]{Assumption}
\crefname{assumption}{assumption}{assumptions}
\Crefname{assumption}{Assumption}{Assumptions}
\theoremstyle{definition}
\newtheorem{remark}[theorem]{Remark}
\theoremstyle{definition}
\newtheorem{example}[theorem]{Example}
\DeclareMathOperator{\Span}{span}
\DeclareMathOperator{\diag}{diag}
\DeclareMathOperator{\rel}{rel}
\newcommand{\RealNumbers}{\mathbb{R}}
\newcommand{\addSet}{\mathcal{A}}
\newcommand{\InitialCondition}{\Sol_{0}}
\newcommand{\InitialConditionROM}{\rom{\Sol}_{0}}
\newcommand{\TimeEnd}{T}
\newcommand{\StateDimensionReduced}{r}
\newcommand{\dimROM}{\StateDimensionReduced}
\newcommand{\dimFOM}{N}
\newcommand{\dimPathSpace}{q}
\newcommand{\param}{\mu}
\newcommand{\CoordinateMatrix}{\Phi}
\newcommand{\Coefficient}{\Coeff}
\newcommand{\PodOperator}{\mathcal{R}}
\newcommand{\PODoperator}{\PodOperator}
\newcommand{\Eigenvalue}{\lambda}
\newcommand{\AutoCorrelationFunction}{R}
\newcommand{\rom}[1]{\widetilde{#1}}
\newcommand{\PODmassMatrix}{M}
\newcommand{\PODmassMatrixState}{M_{\mathrm{\SolROM}}}
\newcommand{\PODmassMatrixPath}{M_{\mathrm{\Path}}}
\newcommand{\PODpathMatrix}{N}
\newcommand{\ShiftOperator}{\mathcal{S}}
\newcommand{\dt}{\,\mathrm{d}t}
\newcommand{\X}{\mathscr{X}}
\newcommand{\Y}{\mathscr{Y}}
\newcommand{\FrozenSolution}{v}
\newcommand{\FrozenSolROM}{\tilde{\FrozenSolution}}
\newcommand{\PhaseCondition}{\Psi}
\newcommand{\LTwo}[1]{L^{2}\left(#1\right)}
\newcommand{\LocSquareIntegrableFunctions}[1]{L_{\mathrm{loc}}^{2}\left(#1\right)}
\newcommand{\Sol}{z}
\newcommand{\SolROM}{\tilde{\Sol}}
\newcommand{\SolROMlifted}{\hat{\Sol}}
\newcommand{\ParDer}[2]{\partial_{#1}{#2}}
\newcommand{\RhsOp}{\mathcal{F}}
\newcommand{\F}{\RhsOp}
\newcommand{\FROM}{\rom{F}}
\newcommand{\FROMstate}{\FROM_{\mathrm{\SolROM}}}
\newcommand{\FROMpath}{\FROM_{\mathrm{\Path}}}
\newcommand{\Coeff}{\SolROM}
\newcommand{\Mode}{\varphi}
\newcommand{\Hom}{\Transformation}
\newcommand{\Transformation}{\mathcal{T}}
\newcommand{\Path}{p}
\newcommand{\PathSpace}{\mathcal{P}}
\newcommand{\ie}{i{.}e{.}}
\newcommand{\st}{s{.}t{.}}
\newcommand{\operatorNorm}[1]{{\left\vert\kern-0.25ex\left\vert\kern-0.25ex\left\vert #1 \right\vert\kern-0.25ex\right\vert\kern-0.25ex\right\vert}}
\newcommand{\ipX}[1]{\left\langle #1\right\rangle_{\mspace{-3mu}\X}}
\newacronym{MOR}{MOR}{model order reduction}
\newacronym{ROM}{ROM}{reduced order model}
\newacronym{FOM}{FOM}{full order model}
\newacronym{SVD}{SVD}{singular value decomposition}
\newacronym{POD}{POD}{proper orthogonal decomposition}
\newacronym{sPOD}{shifted POD}{shifted proper orthogonal decomposition}
\newacronym{EIM}{EIM}{empirical interpolation method}
\newacronym{DEIM}{DEIM}{discrete empirical interpolation method}
\newacronym{DAE}{DAE}{differential-algebraic equation}
\newacronym{MFEM}{MFEM}{moving finite element method}
\title{Projection-Based Model Reduction with Dynamically Transformed Modes}
\author{Felix Black\footnotemark[1]~\footnotemark[2] \and Philipp Schulze\footnotemark[1]~\footnotemark[2] \and Benjamin Unger\footnotemark[1]~\footnotemark[3]}
\date{\today}
\begin{document}

\maketitle
\renewcommand{\thefootnote}{\fnsymbol{footnote}}
\footnotetext[1]{Institut f\"ur Mathematik,
Technische Universität Berlin, Str.\ des 17.~Juni~136,
10623~Berlin,
Germany,
\texttt{\{black,pschulze,unger\}@math.tu-berlin.de}. }
\footnotetext[2]{
The work of these authors is supported by the DFG Collaborative Research Center 1029 \emph{Substantial efficiency increase in gas turbines through direct use of coupled unsteady combustion and flow dynamics},
project number 200291049.}
\footnotetext[3]{
The work of this author is supported by the DFG Collaborative Research Center 910 \emph{Control of self-organizing nonlinear systems: Theoretical methods and concepts of application}, project number 163436311.}
%
\begin{abstract}
 	We propose a new model reduction framework for problems that exhibit transport phenomena. 
     As in the moving finite element method (MFEM), our method employs time-dependent transformation operators and, especially, generalizes MFEM to arbitrary basis functions.
     The new framework is suitable to obtain a low-dimensional approximation with small errors even in situations where classical model order reduction techniques require much higher dimensions for a similar approximation quality. 
     Analogously to the MFEM framework, the reduced model is designed to minimize the residual, which is also the basis for an a-posteriori error bound. 
     Moreover, since the dependence of the transformation operators on the reduced state is nonlinear, the resulting reduced order model is obtained by projecting the original evolution equation onto a nonlinear manifold.
     Furthermore, for a special case, we show a connection between our approach and the method of freezing, which is also known as symmetry reduction.
     Besides the construction of the reduced order model, we also analyze the problem of finding optimal basis functions based on given data of the full order solution.
     Especially, we show that the corresponding minimization problem has a solution and reduces to the proper orthogonal decomposition of transformed data in a special case.
     Finally, we demonstrate the effectiveness of our method with several analytical and numerical examples. 
\vskip .3truecm

{\bf Keywords:} transport dominated phenomena, model order reduction, error bound, nonlinear Galerkin, residual minimization
\vskip .3truecm

{\bf AMS(MOS) subject classification:} 35Q35, 65M15, 37L65
\end{abstract}


\section{Introduction}
Over the past three decades, \gls{MOR} has become an established tool to reduce the computational cost for obtaining high fidelity solutions of (partial) differential-algebraic equations that are required in parameter studies, controller design, and optimization. 
The key observation that is used in \gls{MOR} is that in many applications, the solution evolves in a low-dimensional manifold, which itself can be embedded approximately in a low-dimensional subspace. 
The projection of the dynamical system onto this low-dimensional subspace diminishes the computational cost while maintaining a high fidelity solution. 
A successful \gls{MOR} scheme can identify a suitable low-dimensional subspace and quantify the error of the solution of the resulting surrogate model with respect to the solution of the original dynamics. 
For an overview of such methods we refer to  \cite{BenCOW17,Ant05,QuaMN16,HesRS16,QuaR14} and the recent surveys \cite{BauBF14,BenGW15}. 
Most of these \gls{MOR} methods are formulated in a projection framework. 
In more detail, consider a separable Hilbert space $(\X, \ipX{\cdot,\cdot})$ with induced norm $\|\cdot\|_{\X}$ and an evolution equation of the form
\begin{align}
	\label{eq:FOMPDE}
	\dot{\Sol} \left( t
	\right) =
	\F
	\left(\Sol \left( t \right) \right), \quad
	\Sol(0) = \InitialCondition, \quad
	t \in (0,\TimeEnd),
\end{align}
where the operator 
$\F$ is defined on a dense subspace
$\Y\subseteq \X$,
\begin{align*} 
	\F\colon
	\Y \to \X,
	\quad y \mapsto \F(y).
\end{align*}
We assume $\InitialCondition\in\Y$ and call $\Sol$ a solution of \eqref{eq:FOMPDE} if $\Sol(t)\in\Y$ for all $t\geq 0$, $\Sol$ is continuous in $[0,\TimeEnd)$, differentiable in $(0,\TimeEnd)$, and \eqref{eq:FOMPDE} is satisfied. 

Standard projection-based \gls{MOR} is based on the identification of a suitable $\dimROM$-dimensional subspace $\mathcal{Y}\subseteq\Y$, conveniently described by a basis $\left(\Mode_1,\ldots,\Mode_{\dimROM}\right)$, and the assumption that the solution $\Sol$ of \eqref{eq:FOMPDE} is well-approximated within this space, i.e., there exist scalar functions $\Coefficient_i$ such that
\begin{equation}
	\label{eq:GalerkinAnsatz}
	\Sol(t) \approx \SolROMlifted(t) \vcentcolon= \sum_{i=1}^\dimROM \Coefficient_i(t)\Mode_{i} \qquad\text{for }t \in [0,\TimeEnd).
\end{equation}
Substituting $\SolROMlifted$ into \eqref{eq:FOMPDE} and requiring that the residual is orthogonal to the approximation space~$\mathcal{Y}$ results in the \gls{ROM}
\begin{equation}
	\label{eq:ROM}
	\sum_{j=1}^\dimROM \dot{\Coeff}_{j} \ipX{ \Mode_i,\Mode_j} = \ipX{ \Mode_i,\F\left(\sum_{j=1}^\dimROM \Coefficient_j \Mode_j \right)},\qquad i=1,\ldots,\dimROM.
\end{equation}
Note that the matrix $\PODmassMatrix\vcentcolon=[\ipX{ \Mode_i,\Mode_j}]_{i,j=1}^{\dimROM}$ is symmetric and nonsingular such that under reasonable assumptions on~$\F$ the \gls{ROM}~\eqref{eq:ROM} is uniquely solvable for any initial condition. For numerical reasons, the basis $\left(\Mode_1,\ldots,\Mode_{\dimROM}\right)$ is usually chosen to be orthonormal such that $\PODmassMatrix$ equals the $\dimROM$-dimensional identity matrix. The question that remains to be answered is how to choose the subspace $\mathcal{Y}$. The particular choice of this subspace is one of the distinguishing features of the different \gls{MOR} methods.

The best subspace of a given dimension --- in the sense that the worst-case approximation error is minimized --- is described by the Kolmogorov $n$-widths \cite{Kol36,Pin85} (or equivalently as shown in \cite{UngG19} by the Hankel singular values). 
For specific equations, it can be shown (cf.~\cite{MadPT02,MadPT02b,QuaMN16}) that the $n$-widths decay exponentially, enabling \gls{MOR} to succeed.
Unfortunately, there are also several problem classes, where the $n$-widths do not decay exponentially. 
This is typically the case if the dynamical system features strong convection or transport of a quantity within the spatial domain. 
For examples we refer to \cite{CagMS19} and \cite{GreU19}. 
For such problems, standard \gls{MOR} methods cannot produce a low-dimensional model that, at the same time, is very accurate. 
Several authors have observed this, yielding the emerging field of so-called \gls{MOR} for transport dominated phenomena. We give a detailed overview of the current state of the art in \cref{sec:literatureReview}.

The slow decay of the Kolmogorov $n$-widths is often related to sharp gradients of the solution that travel through the physical domain. 
The large gradients might develop over time (for instance, in Burgers' equation with large Reynolds number) or are enforced within the system by the initial condition. The \gls{MFEM} \cite{MilM81,GelDM81} accounts for these typically local effects by moving the discretization nodes in an automated fashion to the critical areas. Therefore the basis functions are transformed based on the current position of the respective nodes, and the node positions are considered as additional unknowns. The equation for the node position is derived from the necessary condition for minimizing the squared norm of the residual. In this paper we transfer this idea to the context of model order reduction by replacing the approximation \eqref{eq:GalerkinAnsatz} with
\begin{equation}
	\label{eq:multiframeAnsatz}
	\Sol(t) \approx \SolROMlifted(t) \vcentcolon=\sum_{i=1}^{\dimROM} \Coeff_{i}(t) \Transformation_i \left(\Path_i(t)\right)\Mode_i\qquad\text{for }t \in [0,\TimeEnd)
\end{equation}
with suitable transformation operators $\Transformation_i\colon \PathSpace_i \to\mathcal{B}(\X)$ for $i = 1,\ldots,\dimROM$ that are $\Y$-invariant, i.e., $\Transformation_i(\Path)\Y\subseteq\Y$ for all $\Path\in\PathSpace_i$. 
Here, $\mathcal{B}(\X)$ denotes the space of all linear and bounded operators on $\X$. For given paths $\Path_i$ the adaptive low-dimensional approximation manifold is thus given as the linear span of $\{\Transformation_i \left(\Path_i(t)\right)\Mode_i\}_{i=1}^r$. 
Since our main goal is to use the approximation~\eqref{eq:multiframeAnsatz} to derive a \gls{ROM}, we have to ensure that we can differentiate with respect to time. 
In particular, we have to choose suitable sets $\PathSpace_i$. 
Natural choices are to assume that the $\PathSpace_i$'s are Lie groups or real Banach spaces. 
For the main applications we have in mind, it suffices to choose $\PathSpace_i = \mathbb{R}^{\dimPathSpace_i}$ and for the ease of presentation we do not consider the more general cases described before.
We immediately arise at three questions:
\begin{compactenum}
	\item What are suitable families of transformation operators $\Transformation_i$?
	\item For given transformations $\Transformation_i$, how to determine $\Coeff_i$, $\Mode_i$, and $\Path_i$ such that the approximation error in \eqref{eq:multiframeAnsatz} is minimized?
	\item How to use \eqref{eq:multiframeAnsatz} in a projection framework such that the resulting \gls{ROM} can be evaluated efficiently?
\end{compactenum}

In practice, we often have knowledge about the solution behavior of the equations at hand and can thus design appropriate transformation operators. 
For instance, for wave-like phenomena, we can use translation operators that shift the basis functions or modes $\Mode_i$ in the spatial domain (see the forthcoming \Cref{ex:advectionEquationSPOD} for the advection equation and \Cref{rem:equivarianceSemigroup} for a more abstract discussion). 
Therefore, we assume for the remainder of the manuscript that the transformations $\Transformation_i$ are given such that we only have to deal with the second and the third question, which we address in \cref{sec:shiftedPOD} and \cref{sec:onlinePhase}, respectively. 
More precisely, we extend in \cref{sec:shiftedPOD} the residual minimization version of the \gls{sPOD} as presented in \cite{SchRM19} to the infinite-dimensional setting and, thereby, answer parts of the second question. 
The resulting adaptive basis is then used to obtain a \gls{ROM} via Galerkin projection. In most applications, the paths $\Path_i$ are unknown and have to be computed during the online phase along with the coefficient functions $\Coeff_i$, which renders the approximation \eqref{eq:multiframeAnsatz} nonlinear. 
Consequently, the \gls{ROM} that is obtained via Galerkin projection of \eqref{eq:FOMPDE} is an underdetermined nonlinear \gls{DAE} that needs to be completed with additional equations. This is discussed in detail in \cref{sec:onlinePhase}. Our main contributions are the following:
\begin{compactenum}
	\item In \cref{sec:literatureReview}, we give a detailed overview of the literature on model reduction methods with special emphasis on transport dominated problems or problems with slowly decaying Kolmogorov $n$-widths.
	\item We show (cf.\ \Cref{thm:sPODminimization}) that for given paths $\Path_i$, the optimization problem that minimizes the residual in \eqref{eq:multiframeAnsatz} has a solution. In the special case that all modes are transformed with the same operator and the same path we establish in \Cref{thm:sPODminimization:singleTransformation} that the minimization problem is equivalent to standard \gls{POD} (see~\cref{sec:POD}) with transformed data.
	\item Based on the approximation ansatz \eqref{eq:multiframeAnsatz}, we construct a \gls{ROM} that minimizes the residual, see \Cref{thm:continuousOptimality}, and thus extend the idea of the \gls{MFEM} to the model reduction context. The \gls{ROM} is certified in the sense that we provide an a-posteriori residual-based error bound in \Cref{thm:errorBound}. 
	\item For the special case that all modes are transformed with the same operator and the same path, we discuss in \cref{sec:symmetryReduction} the close connection to the symmetry reduction framework \cite{RowM00,BeyT04}. In particular, \Cref{thm:phaseCondition} details the connection of the phase condition (cf.~\eqref{eq:phaseConditions}) that is obtained from minimizing the residual and the phase condition that minimizes the temporal change of the reduced state.
\end{compactenum}

We close our discussion with three numerical examples. 
In the case of the advection-diffusion equation, which is discussed in \cref{subsec:advectionDiffusion}, our \gls{ROM} is capable of yielding a good approximation of the dynamics with only two modes. 
In addition, this example numerically justifies the approach to use the same transformation for several modes, as proposed in the forthcoming equation \eqref{eq:multframeAnsatzGroup}. This idea is not only advantageous for the actual implementation, but also for the theory; see \Cref{thm:wellPosedROM} and \Cref{rem:issueMultiframeApproach}.
In \cref{subsec:waveEquation}, we consider the linear wave equation and obtain an excellent agreement with the \gls{FOM} by using an approximation with only two modes.
Furthermore, we observe that the \gls{ROM} allows choosing significantly larger time steps than the full order model, which is an advantage of performing the model reduction on the time-continuous level.
Finally, we consider the viscous Burgers' equation in \cref{subsec:burgers} as an example of a nonlinear full order model.
In this context, we discuss how the evaluation of the reduced order model can be rendered independent from the full order dimension.
Furthermore, our approach outperforms the classical \gls{POD}-Galerkin approach and is able to yield a decent approximation of the dynamics with just seven modes.

\begin{remark}
	In \cite{LeeC19} the authors use a general nonlinear approximation of the form $\Sol(t) \approx g(\Coefficient_1(t),\ldots,\Coefficient_{\dimROM}(t))$, which certainly generalizes our approximation approach presented in \eqref{eq:multiframeAnsatz}. We believe however that our parametrization of the general nonlinear approximation offers some advantages. 
	First, in the general setting with a possibly infinite-dimensional Banach space $\X$ it seems rather difficult to prescribe a suitable nonlinear operator $g\colon\mathbb{R}^{\dimROM}\to\X$, even within a deep-learning framework as proposed in \cite{LeeC19}. 
	Working in the infinite-dimensional setting often allows using equivariance (see~\cref{sec:symmetryReduction} for further details), which may not be possible anymore after semi-discretization in space. 
	Moreover, the separation of amplitudes and transformed modes as in \eqref{eq:multiframeAnsatz}, gives a direct physical interpretation of the \gls{ROM}, where the transformed mode $\Transformation_i(\Path_i(t))\Mode_i$ may be related to a reference frame. 
	For additional numerical and theoretical benefits, this may be exploited even further by prescribing the same transformation operator with the same path for different modes $\Mode_i$, i.e., to replace \eqref{eq:multiframeAnsatz} with
	\begin{equation}
		\label{eq:multframeAnsatzGroup}
		\Sol(t) \approx \sum_{i=1}^{\hat{\dimROM}} \sum_{j=1}^{\dimROM_i} \Coefficient_{i,j}(t)\Transformation_i(\Path_i(t))\Mode_{i,j}.
	\end{equation}
	On the one hand, this approach reduces the computational cost required to solve the \gls{ROM}, since less path variables need to be computed and, on the other hand, provides more flexibility for theoretical investigations, see the forthcoming \Cref{rem:issueMultiframeApproach}.
\end{remark}

\section{State of the Art}
\label{sec:literatureReview}

As mentioned in the introduction, classical model order reduction techniques are usually not able to provide accurate low-dimensional models when applied to systems exhibiting the transport of structures with large gradients, such as shocks.
In the past years, there has been an increasing effort in the model reduction community to develop new methods that can solve this problem. 
The most relevant approaches can be roughly subdivided into three classes.
The first class of methods aims at describing the transport by appropriately chosen time-dependent coordinate transformations, which we hence refer to as \emph{reference frame methods}.
For instance, in the case of a simple advection problem, an apparent coordinate transformation is given by a time-dependent translation that describes the advective behavior. 
The second class incorporates a more general time-dependent update of the basis functions, which is not necessarily described by a coordinate transformation.
This update of the basis functions may be, e.g., realized by an $h$-refinement-like enrichment of the basis functions or by an equation describing the evolution of the basis functions.
The third class considers general nonlinear approximation ansatzes accompanied by a nonlinear Petrov--Galerkin projection to construct the \gls{ROM}.
The corresponding nonlinear mappings are usually constructed via techniques from machine learning.

\paragraph{Reference frame methods} A common approach among the methods using coordinate transformations consists of formulating the \gls{FOM} in a new coordinate system, which we refer to as the \emph{reference frame}.
The goal is to choose the coordinate system in such a way that the Kolmogorov $n$-widths for the transformed problem decay fast and thus enable standard \gls{MOR} methods to succeed.
The first developments in this direction are presented within the symmetry reduction framework, cf.~\cite{BeyT04,RowM00,RowKML03}.
The main idea is to approximate the solution by a composition of the action of a time-dependent group element and a so-called \emph{frozen solution}, cf.~\cref{sec:symmetryReduction}.
Ideally, the group action and the time-dependent element are chosen such that the frozen solution is almost constant over time, which supports a low-dimensional approximation.
The group action can, for example, be chosen based on physical considerations or from snapshot data of the full order solution.
For instance, in \cite{SonSK13}, the authors present a method to pre-process snapshot data to align them in such a way that the singular values decay fast and, thus, a low-dimensional description of the dynamics can be obtained.
To this end, they assume the snapshots to be almost identical up to the action of some underlying symmetry group.
Here, the coordinate transformation is described by the group action, and the task is to assign group elements to the snapshots optimally.
The proposed solution for this problem involves the recently introduced eigenvector alignment method \cite{Sin11}, which, in comparison to other methods, has the advantage that no template snapshot needs to be chosen.
Similar ideas to pre-process the snapshot data by some kind of alignment or calibration have been applied, for instance, in \cite{NonBRM19,SesS19}.
A different approach for determining a low-dimensional description of given snapshot data was presented in \cite{IolL14} and is based on solving optimal mass transfer problems.

The contributions mentioned so far mainly consider the task to find a low-dimensional approximation of snapshot data, but they do not construct a dynamic \gls{ROM} based on the low-dimensional description.
An approach that considers both tasks is, for instance, presented in \cite{MowS18}.
At the first step, the authors apply a symmetry reduction tool, called the method of slices \cite{RowM00}, to formulate the \gls{FOM} in the reference frame.
Afterward, they apply a standard model reduction scheme to the evolution equation in the reference frame.
Similar approaches based on the idea to formulate the \gls{FOM} in a different coordinate system and apply standard model reduction schemes afterward are discussed, e.g., in \cite{MojB17, OhlR13, RowM00, Tad19}.

Instead of first transforming the full order model and then reducing the transformed system, the authors in \cite{CagMS19} directly reduce the untransformed \gls{FOM} using an approximation ansatz that includes a coordinate transformation.
Their approach for the compression of snapshot data is similar to the ones mentioned above. However, they also present a method for constructing a \gls{ROM} based on the identified low-dimensional description of the snapshot data.
To this end, the authors first discretize in time and then substitute the approximation ansatz into the semi-discrete full order model.
They present an algorithm for updating the time-discrete states of the \gls{ROM} by minimizing the time-discrete residual.
The evaluation of the \gls{ROM} still scales with the full dimension, but they present an additional approximation of the \gls{ROM}, which allows achieving an efficient offline/online decomposition.
Further methods which are based on the idea to enhance the approximation ansatz via a coordinate transformation are discussed in \cite{CagCMA17, GlaMM98, KarBR19, NaiB18,RimPM20}.

All methods mentioned so far have in common that they apply a single coordinate transform in order to get a faster decay of the singular values or the Kolmogorov $n$-widths.
However, one coordinate transformation may not be sufficient to obtain a low-dimensional description in the case of multiple transport velocities in the system.
For instance, the analytical solution of the linear wave equation can be represented by two traveling waves, which cannot be described by two modes and a single transformation.
So far, there are only a few methods that consider the case of multiple transport velocities and treat them with different coordinate transformations.
For example, the \gls{sPOD} \cite{Rei18,ReiSSM18,SchRM19} considers an approximation ansatz, which allows to have different coordinate transformations for different sets of basis functions.
To compute a low-dimensional approximation of given snapshot data, the authors in \cite{ReiSSM18} propose a heuristic method that iteratively transforms the snapshots into the different reference frames and compresses the data in the respective frame using a singular value decomposition.
In \cite{Rei18}, the dominant modes are identified by solving an optimization problem that maximizes the leading singular values in each reference frame.
Also, in \cite{SchRM19}, a \gls{sPOD} approximation is computed by solving an optimization problem, which aims at minimizing the deviation of the approximation from the original snapshot data.
An alternative approach is considered in \cite{RimML18}, where the authors also present a method for identifying modes in several reference frames similar to the \gls{sPOD}.
However, instead of applying an iterative or optimization procedure, they extract the dominant modes one after another in a greedy fashion.
Thus, this method has a lower computational complexity in comparison to the \gls{sPOD}. However, it fails to give the minimum number of modes even for examples like the linear wave equation where the analytic solution is known.
Both the \gls{sPOD} and the transport reversal presented in \cite{RimML18} are methods for the identification of dominant modes only, and there is no dynamic \gls{ROM} constructed based on the determined modes.

In this paper, we close this gap by introducing a framework that allows constructing \glspl{ROM} based on transformed modes, which can, for instance, be computed by one of the contributions mentioned above.
Notably, our framework is not restricted to the case that all modes are transformed by the same coordinate transformation, but we consider the general ansatz \eqref{eq:multiframeAnsatz}, which allows incorporating several transformation operators.

\paragraph{Adaptive basis methods} 
The second class of \gls{MOR} methods for transport dominated systems is based on an online adaptation of the modes during the simulation of the \gls{ROM}.
For instance, in \cite{DihDH11}, the authors combine the reduced basis method with a segmentation of the time interval, i.e., they subdivide the total time interval of interest into subintervals and then compute a reduced basis on each of these subintervals.
The authors of \cite{DihDH11} propose an adaptive segmentation such that a prescribed error tolerance and a maximum number of basis functions per time interval is not exceeded.
Depending on the time interval segmentation, this may lead to a considerable reduction of the number of required basis functions in each time interval in comparison to a non-segmented approach.
In the online phase, they simulate the \gls{ROM} subsequently on each subinterval.
When the interface between two adjacent subintervals is reached, the initial value for the new subinterval is computed by an orthogonal projection of the current approximation of the full order state onto the span of the basis functions of the new subinterval.
Thus, the \gls{ROM} is a switched system, where the switching condition depends solely on time.
Furthermore, they provide an a-posteriori error estimator, which they use to drive a greedy parameter sampling and the adaptive time partitioning.

Differently from \cite{DihDH11}, the authors in \cite{EttC19} propose a scheme that adapts the reduced basis only in the online phase and only if the error estimator returns values which are too high or too low.
In the offline phase, they define a tree structure that represents hierarchical orthogonal decompositions of the underlying vector space of the \gls{FOM}.
Based on this tree structure, they are able to adaptively refine the basis if the error of the ROM is too high or to compress the basis if the error is smaller than a prescribed error threshold.
This refinement or compression of the basis corresponds to moving downwards or upwards in the tree structure, respectively.
As in \cite{DihDH11}, the obtained \gls{ROM} is a switched system, but in contrast to \cite{DihDH11}, the switching condition is state-dependent since the error estimators are based on the current state of the \gls{ROM}.
Thus, the switching times are a priori not known.
The work in \cite{EttC19} is based on the ideas presented in \cite{Car15} and extends them towards more general refinement trees and a more general and efficient basis compression scheme.
Another online-adaptive scheme is proposed in \cite{Peh18}, where the basis functions are regularly modified via a low-rank update, where the number of basis functions remains constant in contrast to \cite{EttC19}.
\begin{remark}
	Model reduction for switched systems is an active research area for itself, with several contributions within the last years. For an overview we mention \cite{BasPWL16,SchU18,GosPAF18,ScaA16a} and the references therein.
\end{remark}
In \cite{GerL14}, the authors present an approach that is also based on a reduced basis, which is adapted as time evolves.
However, in contrast to the works mentioned above, the rules for updating the basis are more problem-specific.
In concreto, they propose to use the eigenfunctions of a linear Schrödinger operator associated with the initial value of the \gls{FOM} as basis functions.
Then, the time evolution of the basis functions is performed in such a way that the basis functions remain eigenfunctions of a linear Schrödinger operator associated with the time-dependent \gls{FOM}.
Consequently, they obtain an additional evolution equation for the basis functions.
In contrast to the works mentioned above, the \gls{ROM} is not a switched system with time-discrete changes in the basis functions, but instead they obtain a time-continuous equation for the evolution of the basis functions.

In the last paragraphs, we have discussed model reduction methods, which are based on time-dependent basis functions, while there are different ways of establishing this time dependency.
Formally, also the reference frame methods fit into this rather general class of approaches since applying time-dependent coordinate transformations is a particular way of inducing a time dependency into the modes.
Nevertheless, we distinguish between these two classes since the main ideas are somewhat different.
The reference frame methods are motivated by physics and incorporate coordinate transformations to model the advection behavior present in the dynamics.
On the other hand, the adaptive basis methods propose rather general ways of updating the basis functions without attempting to model the advection explicitly.

\paragraph{Nonlinear projection methods}
The idea of the third class of methods is to approximate the solution via a general nonlinear approximation ansatz.
In \cite{LeeC19}, the authors use an autoencoder, which is a type of artificial neural network, to obtain a low-dimensional description of the \gls{FOM} solution.
Based on the snapshot data, a decoder and an encoder mapping are learned, where the decoder is a mapping from the reduced state space to the full state space and the encoder vice versa.
Especially, the lifting of the reduced state to an approximation of the full order state is performed by the decoder mapping, which thus describes the approximation ansatz.
The projection of the \gls{FOM} is carried out by substituting the approximation ansatz into the \gls{FOM} and then constructing the \gls{ROM} via minimization of the residual.
They propose two different approaches: One is based on minimizing the residual for the time-continuous \gls{FOM} while the other one considers the \gls{FOM} in the time-discrete setting.
The idea to use autoencoders for the purpose of model order reduction has been previously presented in \cite{Kas16} and \cite{HarM17}.

\begin{remark}
    For several hyperbolic problems, it is possible to derive an equivalent system of delay equations, see, for instance, \cite{Bra67, CooK68, Lop76}. Although these delay equations are still infinite-dimensional, they are -- from a computational perspective -- much easier to solve. From a model reduction perspective, this provides a different approach to the approximation of transport dominated phenomena by searching for a surrogate model that captures the transport by including a time-delay. Some first results in this direction are obtained in \cite{ScaA16,SchU16,SchUBG18,FosSU19-ppt,PonPS18}.
\end{remark}

\section{Notation and Preliminaries}

Throughout the paper we denote the Bochner space of  square integrable functions and locally square integrable functions in the time interval $[0,T]$ with values in a Banach space $\X$ by $\LTwo{0,T;\X}$ and $\LocSquareIntegrableFunctions{-\infty,\infty;\X}$, respectively. If $\X=\mathbb{R}$ we simply write $\LTwo{0,T}$. The Sobolov space of square integrable functions in a domain $\Omega \subseteq\mathbb{R}^d$ with square integrable derivative is denoted by $H^1(\Omega)$. If additionally periodic boundary conditions are assumed, we write $H^1_{\mathrm{per}}(\Omega)$. For an operator $\mathcal{A}$ we denote its adjoint operator by $\mathcal{A}^\star$. The induced operator norm of $\mathcal{A}$ is denoted by $\operatorNorm{\mathcal{A}}$ and the standard Euclidean norm on $\mathbb{R}^n$ is written as $\|\cdot\|_2$. The transpose of a matrix $M = [m_{ij}]\in\mathbb{R}^{n\times m}$ is denoted by $M^T$ and $\delta_{ij}$ is the Kronecker delta.

\subsection{Proper Orthogonal Decomposition}
\label{sec:POD}

To motivate what follows, we consider one classical way of constructing the \gls{MOR} projection basis $(\Mode_1,\ldots,\Mode_{\dimROM})$, namely \gls{POD}, see for instance \cite{HolLBR12,GubV17} and the references therein. Our starting point for the discussion is the approximation \eqref{eq:GalerkinAnsatz}. 
Suppose that we have access to a solution trajectory $\Sol$ of \eqref{eq:FOMPDE}, for instance via a numerical simulation. 
For a given dimension $\dimROM<\dim(\X)$, the approximation error in \eqref{eq:GalerkinAnsatz} is minimized via the optimization problem
\begin{equation}
	\label{eq:PODminimization}
	\left\{\begin{aligned}
		&\min \frac{1}{2}\int_0^\TimeEnd \left\lVert \Sol(t)-\sum_{j=1}^{\dimROM} \Coeff_{j}(t) \Mode_{j}\right\rVert_\X^2 \dt\\
		&\text{\st}\ \{\Mode_{j}\}_{j=1}^{\dimROM} \subseteq \Y \text{ and } \ipX{ \Mode_i, \Mode_j} = \delta_{ij},\; i,j=1,\ldots,\dimROM.
	\end{aligned}\right.
\end{equation}
For later referencing, we call the solution $\{\Mode_{j}\}_{j=1}^\dimROM$ of \eqref{eq:PODminimization} the \emph{dominant modes} for the solution trajectory $\Sol$. 
Notice that the optimization problem \eqref{eq:PODminimization} depends on the coefficient functions $\Coeff_{j}$ for $j=1,\ldots,\dimROM$ and in principle one should also minimize over the $\Coeff_{j}$'s. 
It is however well-known that the best approximation in a subspace is given by the orthogonal projection onto this subspace and hence we have $\Coeff_{j}(t) = \ipX{ \Sol(t), \Mode_{j}}$ for $j = 1,\ldots,\dimROM$.
Thus, we are interested in the optimization problem
\begin{equation}
	\label{eq:PODminimization2}
	\left\{\begin{aligned}
		&\min \frac{1}{2}\int_0^\TimeEnd \left\lVert \Sol(t)-\sum_{j=1}^{\dimROM} \ipX{ \Sol(t),\Mode_{j}} \Mode_{j}\right\rVert_\X^2 \dt\\
		&\text{\st}\ \{\Mode_{j}\}_{j=1}^\dimROM\subseteq \Y \text{ and } \ipX{ \Mode_i, \Mode_j} = \delta_{ij},\; i,j=1,\ldots,\dimROM.
	\end{aligned}\right.
\end{equation}

Following \cite{GubV17}, the optimization problem can be solved by computing the eigenvalues of the nonnegative, self-adjoint compact operator
\begin{equation}
	\label{eq:PODoperator}
	\PODoperator\colon \X\to\X,\qquad \PODoperator\Mode = \int_0^\TimeEnd \ipX{ \Sol(t),\Mode} \Sol(t) \dt.
\end{equation}
Observe that $\Sol\in \LTwo{0,\TimeEnd;\Y}$ implies (see also \cite[Lemma~1.24]{GubV17})  $\PODoperator\Mode\in \Y$ for all~$\Mode\in\X$. In this case, a solution of \eqref{eq:PODminimization2} can be obtained as follows.

\begin{theorem}[{\cite[Theorem~1.15]{GubV17}}]
	\label{thm:PODminimization}
	Let $\X$ be a separable real Hilbert space and suppose that $\Sol\in\LTwo{0,\TimeEnd;\X}$ is given. Then there exist nonnegative eigenvalues $\Eigenvalue_i$ and associated orthonormal eigenvectors $\Mode_i\in\X$ for $i\in\mathcal{I}$ with
	\begin{displaymath}
		\mathcal{I} \vcentcolon= \begin{cases}
			\{1,\ldots,\dim(\X)\}, & \text{if $\dim(\X)<\infty$},\\
			\mathbb{N}, & \text{otherwise},
		\end{cases}
	\end{displaymath}		
	 satisfying
	\begin{align}
		\label{eq:PODeigenvalueProblem}
		&&\PODoperator\Mode_i &= \Eigenvalue_i\Mode_i &&\text{for $i\in\mathcal{I}$},\\
		\text{and}&& \Eigenvalue_{i} \geq \Eigenvalue_{i+1} &\geq 0 &&\text{for $i<\dim(\X)$},
	\end{align}
	with $\PODoperator$ as defined in \eqref{eq:PODoperator}. 
	If in addition $\Sol\in\LTwo{0,\TimeEnd;\Y}$ with a dense subspace $\Y\subseteq\X$, then for any $\dimROM\in\mathcal{I}$ with $\lambda_{\dimROM} > 0$, the set of the $\dimROM$ leading eigenvectors $\{\Mode_i\}_{i=1}^{\dimROM}$ is a solution of \eqref{eq:PODminimization2}.
\end{theorem}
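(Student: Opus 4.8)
The plan is to establish the structural properties of the operator $\PODoperator$ defined in \eqref{eq:PODoperator}, invoke the spectral theorem to obtain the eigenpairs, and then recast the optimization problem \eqref{eq:PODminimization2} as a finite-dimensional trace-maximization problem that the leading eigenvectors solve. First I would verify that $\PODoperator$ is linear, bounded, self-adjoint, nonnegative, and compact. Linearity is immediate, and self-adjointness together with nonnegativity follow from
\[
	\ipX{\PODoperator\Mode,\psi} = \int_0^\TimeEnd \ipX{\Sol(t),\Mode}\ipX{\Sol(t),\psi}\dt,
\]
which is symmetric in $(\Mode,\psi)$ and yields $\ipX{\PODoperator\Mode,\Mode} = \int_0^\TimeEnd \ipX{\Sol(t),\Mode}^2\dt \geq 0$. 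Boundedness follows from the Cauchy--Schwarz inequality in the Bochner space, giving $\operatorNorm{\PODoperator} \leq \int_0^\TimeEnd \|\Sol(t)\|_\X^2\dt$, which is finite since $\Sol\in\LTwo{0,\TimeEnd;\X}$. Compactness is the analytic heart of the first claim: $\PODoperator$ is nuclear (hence Hilbert--Schmidt, hence compact), as it is the operator-norm limit of the finite-rank operators obtained by replacing $\Sol$ with piecewise-constant approximations in the Bochner integral. The spectral theorem for compact self-adjoint operators on a separable Hilbert space then supplies an orthonormal eigensystem $\{\Mode_i\}_{i\in\mathcal{I}}$ with real eigenvalues, which nonnegativity forces to be $\geq 0$; since the nonzero eigenvalues can accumulate only at $0$, relabelling them in nonincreasing order gives $\Eigenvalue_i\geq\Eigenvalue_{i+1}\geq 0$ and the eigenvalue problem \eqref{eq:PODeigenvalueProblem}, settling the first assertion.

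For the optimization part, I would exploit the orthonormality constraint $\ipX{\Mode_i,\Mode_j}=\delta_{ij}$ via a Pythagoras computation: writing $P\Sol(t)=\sum_{j=1}^\dimROM\ipX{\Sol(t),\Mode_j}\Mode_j$ for the orthogonal projection onto $\Span\{\Mode_j\}$, one has
\[
	\left\lVert\Sol(t)-\sum_{j=1}^\dimROM \ipX{\Sol(t),\Mode_j}\Mode_j\right\rVert_\X^2 = \|\Sol(t)\|_\X^2 - \sum_{j=1}^\dimROM \ipX{\Sol(t),\Mode_j}^2.
\]
Integrating in time and using the identity $\ipX{\PODoperator\Mode_j,\Mode_j}=\int_0^\TimeEnd\ipX{\Sol(t),\Mode_j}^2\dt$, the first summand $\tfrac12\int_0^\TimeEnd\|\Sol(t)\|_\X^2\dt$ is a constant independent of the modes, so minimizing the objective of \eqref{eq:PODminimization2} is \emph{equivalent} to maximizing $\sum_{j=1}^\dimROM \ipX{\PODoperator\Mode_j,\Mode_j}$ over all orthonormal families $\{\Mode_j\}_{j=1}^\dimROM\subseteq\Y$.

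By the variational (Ky~Fan / Courant--Fischer) characterization of the spectrum of a compact self-adjoint nonnegative operator, this sum is bounded above by $\sum_{j=1}^\dimROM \Eigenvalue_j$, with equality attained by the $\dimROM$ leading eigenvectors. It then remains to confirm feasibility, namely that these eigenvectors lie in the constraint set $\Y$: under the hypothesis $\Sol\in\LTwo{0,\TimeEnd;\Y}$ we have $\PODoperator\Mode\in\Y$ for every $\Mode\in\X$ (as observed just before the theorem), and for $i\leq\dimROM$, where $\Eigenvalue_i>0$, the relation $\Mode_i=\Eigenvalue_i^{-1}\PODoperator\Mode_i$ places $\Mode_i\in\Y$. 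Hence the leading eigenvectors form an admissible maximizer, and since the global upper bound $\sum_{j=1}^\dimROM\Eigenvalue_j$ (taken a priori over all of $\X$) is already achieved within $\Y$, restricting to $\Y$ does not lower the supremum; thus $\{\Mode_i\}_{i=1}^\dimROM$ solves \eqref{eq:PODminimization2}.

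I expect two steps to require the most care. The first is the rigorous proof of compactness of $\PODoperator$ in the genuinely infinite-dimensional setting, where the finite-rank approximation argument must be carried out with the Bochner integral and justified in operator norm. The second is making the trace-maximization upper bound fully rigorous: one must establish the Ky~Fan inequality over \emph{arbitrary} orthonormal $\dimROM$-frames rather than merely comparing eigenvectors, and then reconcile the constraint $\{\Mode_j\}\subseteq\Y$ with the bound derived on all of $\X$ — a reconciliation that succeeds precisely because the extremal eigenvectors already belong to $\Y$.
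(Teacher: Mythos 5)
Your proof is correct and follows essentially the same route as the source the paper cites for this result (the paper gives no proof of its own beyond the reference to \cite[Theorem~1.15]{GubV17}): spectral theory for the compact, self-adjoint, nonnegative operator $\PODoperator$, the Pythagoras reduction to trace maximization with the Ky~Fan bound, and the regularity argument $\Mode_i=\Eigenvalue_i^{-1}\PODoperator\Mode_i\in\Y$ for $\Eigenvalue_i>0$ to secure feasibility of the leading eigenvectors in the constraint set. In particular, your finite-rank approximation of $\PODoperator$ via simple-function approximations of $\Sol$ in the Bochner norm is a valid way to obtain compactness, and your handling of the $\Y$-versus-$\X$ reconciliation matches the remark the paper makes just before the theorem statement.
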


\begin{remark}
	\label{rem:uniquePODbasis}
	One can show (see for instance \cite{Vol01}) that \eqref{eq:PODeigenvalueProblem} equals the first-order necessary optimality condition for the minimization problem \eqref{eq:PODminimization2}. 
	In particular, the minimizer of \eqref{eq:PODminimization2} is unique if, and only if, the first $\dimROM+1$ eigenvalues of $\PODoperator$ are simple, that is $\Eigenvalue_1 > \ldots > \Eigenvalue_\dimROM > \Eigenvalue_{\dimROM+1}$.
\end{remark}

To illustrate \Cref{thm:PODminimization} we consider the following simple example with an advection equation, see also \cite{Ung13,HolLBR12}.
\begin{example}
	\label{ex:advectionEquationPOD}
	The one-dimensional linear advection equation with constant coefficients and periodic boundary condition is given by
	\begin{equation}
		\label{eq:advectionEquation}
		\left\{\begin{aligned}
			\ParDer{t}{\Sol}(t,\xi) + \ParDer{\xi}{\Sol}(t,\xi) &= 0, & (t,\xi)\in(0,1)\times(0,1),\\
			\Sol(t,0) &= \Sol(t,1), & t\in(0,1),\\
			\Sol(0,\xi) &= \InitialCondition(\xi), & \xi\in(0,1),
		\end{aligned}\right.
	\end{equation}
	with given initial value 
	\begin{displaymath}
		\InitialCondition\in \Y\vcentcolon= H^1_{\mathrm{per}}(0,1).
	\end{displaymath}
	For notational convenience, we consider $\InitialCondition$ as an element of $\LocSquareIntegrableFunctions{\mathbb{R}}$ via periodic continuation. 
	It is well-known that the solution of \eqref{eq:advectionEquation} is given by $\Sol(t,\xi) = \InitialCondition(\xi-t)$ for $(t,\xi)\in(0,1)\times(0,1)$. 
	To compute the \gls{POD} basis as in \Cref{thm:PODminimization} we set $\X = \LTwo{0,1}$ and observe that the operator in \eqref{eq:PODoperator} is given by
	\begin{displaymath}
		(\PODoperator\Mode)(\zeta) = \int_0^1 \AutoCorrelationFunction(\zeta,\xi)\Mode(\xi) \,\mathrm{d}\xi
	\end{displaymath}
	with auto-correlation function $\AutoCorrelationFunction(\zeta,\xi) = \int_0^1 \Sol(t,\zeta)\Sol(t,\xi)\dt = \int_0^1 \InitialCondition(t)\InitialCondition(t + (\zeta-\xi))\dt$. In particular, $\AutoCorrelationFunction(\zeta,\xi)$ depends only on the distance between $\zeta$ and $\xi$. Since $\AutoCorrelationFunction$ is periodic in the first argument, and thus also in the distance $\xi-\zeta$, we can consider its representation in the Fourier basis, i.e.,
	\begin{displaymath}
		\AutoCorrelationFunction(\zeta,\xi) = \sum_{k=-\infty}^\infty c_k \exp\left(2\pi\imath k (\xi-\zeta)\right)\qquad\text{with}\ c_k\in\mathbb{R}.
	\end{displaymath}
	Using this expression, we observe that the eigenvectors of $\PODoperator$, which correspond to the solution of the minimization problem \eqref{eq:PODminimization2} for the advection equation \eqref{eq:advectionEquation}, are given by the functions $\Mode_i(\xi) = \exp(2\pi\imath i\xi)$. 
	Note that the eigenvectors of $\PODoperator$ are independent of the initial value, which completely describes the solution of \eqref{eq:advectionEquation} and the initial value only influences the ordering of the dominant modes.
\end{example}

In practice, the solution of the \gls{FOM} \eqref{eq:FOMPDE} usually depends on an additional variable $\param$, i.e., $\Sol(t) = \Sol(t;\param)$, which may represent physical or geometry parameters or a control function. In any case, the dominant modes should reflect the dynamics for a large range of the additional variable $\param$, which we hereafter refer to as \emph{parameter}. 
The current state of the art is to sample the parameter space, i.e., to pick parameters $\param_j$ for $j=1,\ldots,M$ and solve \eqref{eq:FOMPDE} for each parameter value $\param_j$. 
The dominant modes can then be computed by solving \eqref{eq:PODminimization2} simultaneously for all parameters \cite{GubV17}, which is equivalent to concatenating the solution trajectories for different parameters and solve \eqref{eq:PODminimization2} based on the concatenated solution.
An alternative approach is to determine dominant modes for each parameter and then combine the different dominant modes. 
In general, the first approach provides a smaller set of dominant modes, while the second approach allows to pick the parameters iteratively, for instance by a greedy selection procedure. 
Let us mention that convergence rates for the greedy approach can be related to the decay of the Kolmogorov $n$-widths, see for instance \cite{Haa13,UngG19}.

\subsection{Galerkin Projection and Offline/Online Decomposition}\label{sec:offlineOnlineDecomposition}
Having identified a set of dominant modes $\{\Mode_{j}\}_{j=1}^{\dimROM}$ we substitute the Galerkin ansatz \eqref{eq:GalerkinAnsatz} into \eqref{eq:FOMPDE} and obtain at time $t>0$ the residual
\begin{displaymath}
	\sum_{i=1}^\dimROM \dot{\Coefficient}_i(t)\Mode_i - \F\left(\sum_{i=1}^\dimROM \Coefficient_i(t)\Mode_i\right).
\end{displaymath}
Note that the choice of the modes $\{\Mode_{j}\}_{j=1}^{\dimROM}$ fixes the initial condition and thus the coefficient functions $\Coefficient_i$ at time $t=0$. 
Thus, if we want to minimize the norm of the residual we can do so only by optimizing over the slope of the coefficient functions. 
The concept of minimizing the norm of the residual with respect to the slope of the coefficient functions is called \emph{continuous optimality} in \cite{CarBA17}. 
We deduce that the partial derivative with respect to $\dot{\Coefficient}_\ell$ is given by
\begin{equation*}
	\frac{\partial}{\partial \dot{\Coefficient}_\ell} \left\|\sum_{i=1}^\dimROM \dot{\Coefficient}_i\Mode_i - \F\left(\sum_{i=1}^\dimROM \Coefficient_i\Mode_i\right)\right\|_\X^2 = 2\sum_{i=1}^r \dot{\Coefficient}_i \ipX{ \Mode_\ell,\Mode_i} - 2\ipX{ \Mode_\ell, \F\left(\sum_{i=1}^\dimROM \Coefficient_i\Mode_i\right)}.
\end{equation*}
As a consequence the \gls{ROM} \eqref{eq:ROM}, or equivalently
\begin{equation}
	\label{eq:ROM2}
	\PODmassMatrix\dot{\SolROM}(t) = \FROM(\SolROM(t)),\quad 
	\SolROM(0) = \InitialConditionROM, \quad t\in(0,\TimeEnd)
\end{equation}
with $\PODmassMatrix\vcentcolon=[\ipX{ \Mode_i,\Mode_j}]_{i,j=1}^\dimROM\in\mathbb{R}^{\dimROM\times\dimROM}$, reduced state $\SolROM(t)\in\mathbb{R}^\dimROM$, $\FROM\colon\mathbb{R}^{\dimROM}\to\mathbb{R}^{\dimROM}$, and initial value $\InitialConditionROM\in\mathbb{R}^{\dimROM}$ defined as
\begin{displaymath}
	\SolROM(t) \vcentcolon= \begin{bmatrix}
		\Coeff_1(t)\\\vdots\\\Coeff_{\dimROM}(t)
	\end{bmatrix},\qquad
	\FROM(\SolROM)\vcentcolon= \begin{bmatrix}
		\ipX{ \Mode_1, \F\left(\sum_{i=1}^\dimROM \Coefficient_i \Mode_{i}\right)}\\
		\vdots\\
		\ipX{ \Mode_\dimROM, \F\left(\sum_{i=1}^\dimROM \Coefficient_i \Mode_{i}\right)}
	\end{bmatrix},\qquad\text{and}\qquad
	\InitialConditionROM \vcentcolon= \begin{bmatrix}
		\ipX{ \Mode_1, \InitialCondition}\\
		\vdots\\
		\ipX{ \Mode_{\dimROM}, \InitialCondition}
	\end{bmatrix},
\end{displaymath}
satisfies the following result, see also \cite{CarBA17}.

\begin{lemma}[Continuous optimality]
	\label{lem:continuousOptimality}
	The \gls{ROM} \eqref{eq:ROM2} is continuously optimal in the sense that if $\SolROM$ is a solution of \eqref{eq:ROM2}, then for each $t>0$, the velocity $\dot{\SolROM}(t)$ is the unique minimizer of 
	\begin{displaymath}
		\min_{\alpha=[\alpha_i]_{i=1}^r} \left\|\sum_{i=1}^r \alpha_i\Mode_i - \F\left(\sum_{i=1}^r \Coefficient_i(t)\Mode_i\right)\right\|_\X.
	\end{displaymath}
\end{lemma}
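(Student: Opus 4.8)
The plan is to recognize this minimization as an unconstrained, strictly convex quadratic program in $\alpha\in\mathbb{R}^{\dimROM}$ and to show that its unique stationary point is exactly the ROM velocity. First I would observe that minimizing $\|\cdot\|_\X$ and minimizing $\tfrac12\|\cdot\|_\X^2$ produce the same minimizers, since $s\mapsto\tfrac12 s^2$ is strictly increasing on $[0,\infty)$. Hence it suffices to study
\[
	g(\alpha)\vcentcolon=\tfrac12\left\|\sum_{i=1}^{\dimROM}\alpha_i\Mode_i-\F\bigl(\SolROMlifted(t)\bigr)\right\|_\X^2,\qquad\text{where }\SolROMlifted(t)=\sum_{i=1}^{\dimROM}\Coeff_i(t)\Mode_i.
\]
Expanding via bilinearity of $\ipX{\cdot,\cdot}$ rewrites $g$ as the quadratic $g(\alpha)=\tfrac12\,\alpha^T\PODmassMatrix\alpha-\alpha^T b+c$ with Gram matrix $\PODmassMatrix=[\ipX{\Mode_i,\Mode_j}]$, linear part $b=[\ipX{\Mode_\ell,\F(\SolROMlifted(t))}]_{\ell}$, and a constant $c$. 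This is well defined because $\SolROMlifted(t)\in\mathcal{Y}\subseteq\Y$, so $\F(\SolROMlifted(t))\in\X$ and all the inner products are finite.

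Second, I would establish strict convexity. Since $\{\Mode_1,\ldots,\Mode_{\dimROM}\}$ is a basis of the approximation space $\mathcal{Y}$, it is linearly independent, so its Gram matrix $\PODmassMatrix$ is symmetric positive definite. Consequently $g$ is strictly convex and coercive, which guarantees that a minimizer exists, is unique, and is characterized by the first-order condition $\nabla g(\alpha)=0$.

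Third, using the partial-derivative computation carried out immediately before the lemma (with $\alpha$ in place of $\dot{\Coeff}$), the stationarity condition reads $\PODmassMatrix\alpha=b$, and by the definition of $\FROM$ we have $b=\FROM(\SolROM(t))$. Thus the unique minimizer is the solution of $\PODmassMatrix\alpha=\FROM(\SolROM(t))$. Finally, if $\SolROM$ solves the ROM \eqref{eq:ROM2}, then by definition $\PODmassMatrix\dot{\SolROM}(t)=\FROM(\SolROM(t))$ for each $t>0$, so $\alpha=\dot{\SolROM}(t)$ satisfies the optimality condition and is therefore the unique minimizer, as claimed.

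I expect no serious obstacle here, as the argument is essentially a least-squares computation. The only points requiring care are the reduction from the norm to its square and, more importantly, the justification that $\PODmassMatrix$ is positive definite (equivalently, that the modes are linearly independent); this is precisely what upgrades ``stationary point'' to ``unique global minimizer'' and hence delivers both uniqueness and minimality rather than mere criticality.
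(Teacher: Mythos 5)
Your proposal is correct and follows essentially the same route as the paper: the paper proves the lemma via the partial-derivative computation of the squared residual norm immediately preceding the statement (yielding the normal equations $\PODmassMatrix\alpha = \FROM(\SolROM(t))$), combined with its earlier observation that the Gram matrix $\PODmassMatrix$ is symmetric and nonsingular, which is exactly your convexity/positive-definiteness argument made explicit. Your write-up is, if anything, slightly more careful in spelling out why the stationary point is the unique global minimizer.
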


Although the \gls{ROM} is formally stated in $\mathbb{R}^{\dimROM}$, it still depends on the evaluation of $\F$ in the original space $\Y$ and thus it might still be computationally intractable to solve \eqref{eq:ROM2} efficiently. However, in many cases we can precompute all quantities that depend on $\Y$, which allows a fast evaluation of \eqref{eq:ROM2}. This process is called \emph{efficient offline/online decomposition} in the \gls{MOR} literature. For instance for a linear operator $\mathcal{A}\colon\Y\to\X$ we have
\begin{displaymath}
	\begin{bmatrix}
		\ipX{ \Mode_1,\mathcal{A}(\sum_{i=1}^\dimROM \Coefficient_{i}\Mode_{i})}\\
		\vdots\\
		\ipX{ \Mode_{\dimROM}, \mathcal{A}(\sum_{i=1}^\dimROM \Coefficient_{i}\Mode_{i})}\\
	\end{bmatrix} = \begin{bmatrix}
		\ipX{ \Mode_1,\mathcal{A}\Mode_1} & \dots & \ipX{ \Mode_1, \mathcal{A}\Mode_{\dimROM}}\\
		\vdots & & \vdots\\
		\ipX{ \Mode_{\dimROM}, \mathcal{A}\Mode_1 } & \dots & \ipX{ \Mode_{\dimROM}, \mathcal{A}\Mode_{\dimROM}}\\
	\end{bmatrix}\begin{bmatrix}
		\Coefficient_1\\\vdots\\\Coefficient_{\dimROM}
	\end{bmatrix} =\vcentcolon \rom{\mathcal{A}}\SolROM
\end{displaymath}
with $\rom{\mathcal{A}}\in\mathbb{R}^{\dimROM\times\dimROM}$. 
Notice that the precomputation is possible for all polynomial structures, see for instance \cite{KraW19} and the references therein. 
Indeed, let us present the details for a quadratic polynomial, exemplified by a linear operator $\rom{\mathcal{N}}\colon (\Y\otimes\Y)\to\X$, where $\Y\otimes\Y$ denotes the tensor product of $\Y$ with itself. In this case, we have
\begin{displaymath}
	\mathcal{N}\left(\left(\sum_{i=1}^{\dimROM} \Coefficient_i\Mode_i\right)\otimes \left(\sum_{\ell=1}^{\dimROM} \Coefficient_\ell\Mode_\ell\right)\right) = \sum_{i=1}^{\dimROM}\sum_{\ell=1}^{\dimROM} \Coefficient_i\Coefficient_\ell \mathcal{N}(\Mode_i\otimes \Mode_\ell).
\end{displaymath}
Defining
\begin{displaymath}
	\rom{\mathcal{N}} \vcentcolon= \begin{bmatrix}
		\ipX{ \Mode_1,\mathcal{N}(\Mode_1\otimes \Mode_1)} & \ipX{ \Mode_1,\mathcal{N}(\Mode_1\otimes \Mode_2)} & \dots & 
		\ipX{ \Mode_1, \mathcal{N}(\Mode_{\dimROM}\otimes \Mode_{\dimROM}) }\\
		\vdots & \vdots & & \vdots\\
		\ipX{ \Mode_{\dimROM}, \mathcal{N}(\Mode_1\otimes \Mode_1)} & \ipX{ \Mode_{\dimROM}, \mathcal{N}(\Mode_1\otimes \Mode_2)} & \dots & 
		\ipX{ \Mode_{\dimROM}, \mathcal{N}(\Mode_{\dimROM}\otimes \Mode_{\dimROM}) }
	\end{bmatrix}\in\mathbb{R}^{\dimROM\times \dimROM^2}
\end{displaymath}
implies
\begin{displaymath}
	\begin{bmatrix}
		\ipX{ \Mode_1, \mathcal{N}\left(\left(\sum_{i=1}^{\dimROM} \Coefficient_i\Mode_i\right)\otimes \left(\sum_{\ell=1}^{\dimROM} \Coefficient_\ell\Mode_\ell\right)\right) }\\
		\vdots\\
		\ipX{ \Mode_{\dimROM}, \mathcal{N}\left(\left(\sum_{i=1}^{\dimROM} \Coefficient_i\Mode_i\right)\otimes \left(\sum_{\ell=1}^{\dimROM} \Coefficient_\ell\Mode_\ell\right)\right) }
	\end{bmatrix} = \rom{\mathcal{N}}(\SolROM\otimes\SolROM),
\end{displaymath}
where $\rom{\mathcal{N}}$ can be precomputed in the offline phase and, thus, the right-hand side can be computed independently of the original space $\Y$. 
Recall that, for each time instance, $\SolROM$ is an $r$-dimensional real vector and thus the tensor product $\SolROM\otimes\SolROM$ reduces (up to an isomorphism) to the standard Kronecker product. The procedure extends directly for a general polynomial but for the sake of notation we omit the details.

\begin{remark}
	If the Hilbert space $\X$ is finite-dimensional it is isomorphic to $\mathbb{R}^{\dimFOM}$ (with standard inner product) such that without loss of generality we may assume $\X = \mathbb{R}^{\dimFOM}$. In this case, the basis vectors $\Mode_{j}$ ($j=1,\ldots,\dimROM$) form a matrix
	\begin{displaymath}
		\CoordinateMatrix \vcentcolon= \begin{bmatrix}
			\Mode_1 & \dots & \Mode_{\dimROM}
		\end{bmatrix}\in\mathbb{R}^{\dimFOM\times\dimROM}.
	\end{displaymath}
	In this case, the polynomial operators are associated with matrices, i.e., $\mathcal{A}\in\mathbb{R}^{\dimFOM\times\dimFOM}$ and $\mathcal{N}\in\mathbb{R}^{\dimFOM\times\dimFOM^2}$ and the reduced analogues are given by
	\begin{displaymath}
		\rom{\mathcal{A}} = \CoordinateMatrix^T\mathcal{A}\CoordinateMatrix\qquad\text{and}\qquad
		\rom{\mathcal{N}} = \CoordinateMatrix^T\mathcal{N}(\CoordinateMatrix\otimes\CoordinateMatrix),
	\end{displaymath}
	where again $\otimes$ denotes the Kronecker product.
\end{remark}

Albeit many nonlinear systems can be rewritten as polynomial systems by introducing additional states \cite{Gu11}, it may not be possible to reduce the computational complexity to a satisfactory level with the approach presented above. 
To remedy this problem a standard approach is to further approximate the nonlinear function, for instance via the \gls{EIM} \cite{BarMNP04} or the \gls{DEIM} \cite{ChaS11}. 
Although the extension of these methods to our methodology presented in the forthcoming \cref{sec:onlinePhase} is certainly an interesting aspect we consider this a second step and postpone the extension to a future work.

\begin{remark}[Parameter separability]
	\label{rem:paramSeparability}
	If the right-hand side in \eqref{eq:FOMPDE} depends on a parameter $\param$ and is separable with respect to this parameter, that is $\F(\Sol,\param) = \sum_{k=1}^K \theta_k(\param)\F_k(\Sol)$ with suitable scalar-valued functions $\theta_k$, then this structure is retained in the \gls{ROM} by setting $\FROM(\SolROM,\param) = \sum_{k=1}^K \theta_k(\param) \FROM_k(\SolROM)$. 
	This facilitates the efficient usage of the \gls{ROM} in a many-query context, where the \gls{ROM} has to be evaluated for many different parameter values.
\end{remark}

\section{Identification of Dominant Modes}
\label{sec:shiftedPOD}

Similarly as in \cref{sec:POD}, we aim for identifying the dominant modes of the system, which capture most of the dynamics.
However, instead of considering a linear Galerkin approach as in \eqref{eq:GalerkinAnsatz}, we use the more general ansatz \eqref{eq:multiframeAnsatz} here.
Thus, similarly to \cref{sec:POD}, we assume that we are given $\Sol\in\LTwo{0,T;\X}$, which we want to approximate via \eqref{eq:multiframeAnsatz}, \ie, we want to solve the minimization problem
\begin{equation}
	\label{eq:sPODminimization}
	\left\{\begin{aligned}
		&\min\frac{1}{2}\int\limits_0^\TimeEnd \left\lVert z(t)-\sum_{i=1}^{\dimROM} \Coeff_{i}(t) \Transformation_i \left(\Path_i(t)\right)\Mode_i\right\rVert_\X^2 \dt,\\
		&\text{\st}\ \Mode\in\Y^{\dimROM},\, \|\Mode_i\|_\X = 1,\, \Coefficient_i\in\LTwo{0,T},\, \Path_i\in\LTwo{0,T;\PathSpace_i}\ \text{for $i=1,\ldots,\dimROM$}.
	\end{aligned}\right.
\end{equation}
Here, we assume that the mappings $\Transformation_i\colon \PathSpace_i\rightarrow \mathcal{B}(\X)$ for $i=1,\ldots,\dimROM$ are given and satisfy the following assumption. 

\begin{assumption}
	\label{ass:offline} 
	For each $\Mode_i\in\Y$ and each $i\in\lbrace 1,\ldots,\dimROM\rbrace$, the mappings $\Transformation_i(\cdot)\Mode_i\colon\PathSpace_i\to\Y$ are continuous. Moreover, there exists a constant $\overline{c}>0$ such that 
	\begin{equation}
		\label{eq:boundednessOfTrafo}
		\operatorNorm{\Transformation_i(\eta)} \leq \overline{c},\quad\forall\,\eta\in\PathSpace_i
		\end{equation}
	for $i=1,\ldots,\dimROM$, where $\operatorNorm{\cdot}$ denotes the induced operator norm.
\end{assumption}

\begin{lemma}
	Let the mappings $\Transformation_i$ satisfy \Cref{ass:offline} and assume that $\Sol\in\LTwo{0,T;\X}$, $\Mode\in\Y^{\dimROM}$, $\SolROM\in\LTwo{0,T;\mathbb{R}^{\dimROM}}$, and $\Path\in\LTwo{0,T;\PathSpace}$ are given.
	Then the integal in \eqref{eq:sPODminimization} is defined.
\end{lemma}

\begin{proof}
	We define for $i=1,\ldots,\dimROM$ the mapping
	\begin{displaymath}
		\alpha_i\colon(0,T)\times \PathSpace_i\to \X,\qquad (t,\eta_i)\mapsto \Coefficient_i(t)\Transformation_i(\eta_i)\Mode_i
	\end{displaymath}
	and the associated Nemytskij operator $\mathcal{A}_i(\Path_i)(t) = \alpha_i(t,\Path_i(t))$ for almost all $t\in(0,T)$. 
	\Cref{ass:offline} implies that $\alpha_i(t,\cdot)\colon\PathSpace_i\to\X$ is continuous for almost all $t\in(0,T)$. 
	By assumption, $\SolROM$ is measurable and thus $\alpha_i(\cdot,\eta_i)$ is measurable for all $\eta_i\in\PathSpace_i$. In particular, $\alpha_i$ satisfies the Carath\'eodory condition and thus $\mathcal{A}_i(\Path_i)$ is measurable \cite{GolKT92}. We conclude the proof by observing
	\begin{align*}
		\|\mathcal{A}_i(\Path_i)\|_{\LTwo{0,T;\X}}^2 = \int_0^T |\Coefficient_i(t)|^2\|\Transformation_i(\Path_i(t))\Mode_i\|_\X^2 \dt \leq \overline{c}^2\|\Coefficient_i\|_{\LTwo{0,T}}^2
	\end{align*}
	and thus $\mathcal{A}_i(\Path_i)\in\LTwo{0,T;\X}$.
\end{proof}

Before we discuss existence of a minimizer of \eqref{eq:sPODminimization} let us illustrate the usefulness of the ansatz \eqref{eq:multiframeAnsatz} by revisiting the linear advection equation discussed in \Cref{ex:advectionEquationPOD}.
\begin{example}
	\label{ex:advectionEquationSPOD}
	Recall that the solution $\Sol$ of the linear advection equation in \Cref{ex:advectionEquationPOD} is given by a shift of the initial condition, \ie, $\Sol(t,\xi)=\InitialCondition(\xi-t)$ for all $(t,\xi)\in(0,1)\times(0,1)$.
	Defining for $\eta\in\mathbb{R}$ the shift operator
	\begin{align*}
		\ShiftOperator(\eta)&\colon \LTwo{0,1} \rightarrow \LTwo{0,1}, & \ShiftOperator(\eta)f  &\vcentcolon= f(\cdot-\eta)
	\end{align*}
	via periodic continuation, we observe that the solution of the advection equation can be written as 
	\begin{displaymath}
		\Sol(t,\xi)=\ShiftOperator(t)\InitialCondition(\xi)\qquad\text{for all $(t,\xi)\in(0,1)\times(0,1)$}.
	\end{displaymath}
	Thus, a minimizer of \eqref{eq:sPODminimization} is given by the choice $\dimROM=1$, $\Hom_1=\ShiftOperator$, $\Mode_1 = \InitialCondition/\|\InitialCondition\|_\X$, $\Coeff_1(t)\equiv \|\InitialCondition\|_\X$, and $\Path_1(t)=t$ for $t\in (0,1)$. 
	Thus, the solution can be described without approximation error with just one mode when using the ansatz \eqref{eq:multiframeAnsatz}.
	Furthermore, while the dominant modes determined via \gls{POD} are independent from the initial condition (cf.~\Cref{ex:advectionEquationPOD}), here, the dominant mode is given by the (normalized) initial condition itself, which in turn fully describes the solution.
	Especially, it is possible to construct initial conditions which result in a need of arbitrarily many \gls{POD} modes to capture the solution, cf.~\cite{CagMS19}, while using \eqref{eq:multiframeAnsatz} only one mode is needed regardless of which initial condition is chosen.
\end{example}

Let us emphasize that in contrast to the \gls{POD} minimization problem \eqref{eq:PODminimization} discussed in \cref{sec:POD}, we only require the modes to be normalized but not necessarily to form an orthonormal set. The proof of \Cref{thm:PODminimization} relies heavily on the fact that the modes are orthogonal. Mimicking this proof would require that $\Transformation_i(\Path_i(t))\Mode_i$ is orthogonal to $\Transformation_j(\Path_j(t))\Mode_j$ for all $i\neq j$ and all $t\in[0,T]$. The next example highlights that in general, this is not a reasonable assumption. Instead, we only require the modes to be normalized in~$\X$.

\begin{example}[Wave equation]
	\label{ex:waveEquation}
	We consider the linear acoustic wave equation in $\Omega \vcentcolon= (0,1)$ with periodic boundary conditions for the density $\rho$ and the velocity $v$ given by
	\begin{equation}
		\label{eq:linWaveEq}
		\left\{\begin{aligned}
			\ParDer{t}{\rho}(t,\xi) + \ParDer{\xi}{v}(t,\xi) &= 0, & (t,\xi)\in(0,1)\times\Omega,\\
			\ParDer{t}{v}(t,\xi) + \ParDer{\xi}{\rho}(t,\xi) &= 0, & (t,\xi)\in(0,1)\times\Omega,\\
			\rho(t,0) &= \rho(t,1), & t\in(0,1),\\
			v(t,0) &= v(t,1), & t\in(0,1),\\
			\rho(0,\xi) &= \rho_0(\xi), & \xi\in\Omega,\\
			v(0,\xi) &= 0, & \xi\in\Omega,
		\end{aligned}\right.
	\end{equation}
	with given initial value 
	\begin{equation*}
		z_0=
		\begin{bmatrix}
			\rho_0\\
			0
		\end{bmatrix}
		\in \Y\vcentcolon= \left(H^1_{\mathrm{per}}(\Omega)\right)^2.
	\end{equation*}
	Similar to \Cref{ex:advectionEquationPOD}, we consider $\InitialCondition$ as an element of $\LocSquareIntegrableFunctions{\RealNumbers;\mathbb{R}^2}$ via periodic continuation. 
	The analytic solution can be expressed as
	\begin{equation}
		\label{eq:linWaveSolution}
		\begin{bmatrix}
			\rho(t,\xi)\\
			v(t,\xi)
		\end{bmatrix}
		=
		\begin{bmatrix}
			1\\
			1
		\end{bmatrix}
		q_{+}(\xi-t)+
		\begin{bmatrix}
			1\\
			-1
		\end{bmatrix}
		q_{-}(\xi+t),
	\end{equation}
	where $q_{+},q_{-}\in \LocSquareIntegrableFunctions{\mathbb{R}}$ are functions determined via the initial value and the boundary conditions, cf.\ \cite{HirR07}.
	In the case of a homogeneous initial condition for $v$, the values of $q_{+}$ and $q_{-}$ in $\Omega$ are determined via
	\begin{equation*}
			q_{+}(\xi)=q_{-}(\xi)=\frac{1}{2}\rho_0(\xi),\quad \xi\in \Omega,
	\end{equation*}
	and are periodically extended to $\LocSquareIntegrableFunctions{\mathbb{R}}$. Now let us assume that we are only interested in a low-dimensional approximation of the density which is given by
	\begin{equation*}
		\rho(t,\xi) = \frac{1}{2}\left(\rho_0(\xi-t)+\rho_0(\xi+t)\right)=\frac{1}{2}\left(\ShiftOperator(t)\rho_0(\xi)+\ShiftOperator(-t)\rho_0(\xi)\right)=\vcentcolon z(t,\xi)
	\end{equation*}
	for $(t,\xi)\in(0,1)\times\Omega$ with the shift operator as defined in \Cref{ex:advectionEquationSPOD}.
	Thus, a minimizer of \eqref{eq:sPODminimization} is given by the choice $\dimROM=2$, $ \Hom_1=\Hom_2=\ShiftOperator$, $\Mode_1 = \Mode_2 = \rho_0/\|\rho_0\|_\X$, and
	\begin{align*}
		\Coeff_1(t)=\Coeff_2(t)&\equiv \frac{\|\rho_0\|_\X}{2}, & \Path_1(t)&=t, & \Path_2(t)&=-t \quad\text{ for } t\in (0,1),
	\end{align*}
	and we conclude that the solution can be described without approximation error with just two modes with \eqref{eq:multiframeAnsatz}.
	Especially, we observe that the transformed modes $\Hom_1(\Path_1(t))\Mode_1$ and $\Hom_2(\Path_2(t))\Mode_2$ become linearly dependent for $t=0$ and $t=1$.
	Thus, even a minimizer of the cost functional may lead to transformed modes which become linearly dependent.
	This observation indicates that it is not reasonable to enforce orthogonality of the transformed modes in contrast to the POD minimization problem addressed in \cref{sec:POD}.
	In this example, we may still obtain linearly independent modes as long as $\rho_0\neq 0$, by adding the velocity data, \ie, by considering $z=[\rho\,\; v]^T$ instead of $z=\rho$.
\end{example}

By assumption, $\Y$ is a dense subspace of $\X$. Since we only require the modes to be normalized in $\X$, it is not clear that \eqref{eq:sPODminimization} possesses a minimizer in $\Y^{\dimROM}$. Instead, we assume that $\Y$ itself is a reflexive Banach space with norm $\|\cdot\|_\Y$. Moreover, we assume that $\Y$ is compactly embedded in $\X$ (cf.~\cite[Def.~21.13]{Zei90a}) and propose to restrict the admissible set of \eqref{eq:sPODminimization} by imposing a bound on the modes with respect to $\|\cdot\|_\Y$. Another drawback of not enforcing that the modes are linearly independent is that the coefficient functions $\Coefficient_i$ may become unbounded. To prevent this, we further restrict the admissible set by imposing a bound on $\SolROM$ in the $L^2$ norm. Finally, we simplify the minimization problem \eqref{eq:sPODminimization} by assuming that the paths are known a priori or have been determined in a pre-processing step, see for instance \cite{MenBALK19,ReiSSM18,RimML18,SchRM19}. In summary, we assume that the cost functional is defined as
\begin{equation}
	\label{eq:sPODminimization:cost}
	J\colon \LTwo{0,T;\mathbb{R}^{\dimROM}}\times\X^{\dimROM} \to \mathbb{R}, \qquad (\SolROM,\Mode)\mapsto 
	\frac{1}{2}\int_0^T\left\|\Sol(t) - \sum_{i=1}^{\dimROM} \Coefficient_i(t)\Transformation_i(\Path_i(t))\Mode_i\right\|_\X^2 \dt,
\end{equation}
where we use the notation $\SolROM = (\SolROM_1,\ldots,\SolROM_{\dimROM})$ and $\Mode = (\Mode_1,\ldots,\Mode_{\dimROM})$. The admissible set is given by
\begin{equation}
	\label{eq:sPODminimzation:admissibleSet}
	\addSet \vcentcolon= \left\{
		(\SolROM,\Mode)\in \LTwo{0,T;\mathbb{R}^{\dimROM}}\times\X^{\dimROM}\,\left|\, \begin{gathered}
			\Mode \in \Y^{\dimROM}, \max \{\|\Mode_i\|_{\Y},\|\Coefficient_i\|_{\LTwo{0,T;\mathbb{R}}}\} \leq C,\\
			\text{and } \|\Mode_i\|_{\X} = 1 \text{ for } i=1,\ldots,\dimROM
		\end{gathered} \right.
	\right\}
\end{equation}
with given constant $C>0$ that is large enough. 
Instead of the minimization problem \eqref{eq:sPODminimization} we thus consider 
\begin{equation}
	\label{eq:sPODminimizationRestriction}
	\min J(\SolROM,\Mode)\qquad\text{\st}\ (\SolROM,\Mode)\in\addSet.
\end{equation}

\begin{example}
	\label{ex:compactEmbedding}
	For a bounded Lipschitz domain $\Omega\subseteq\mathbb{R}^d$ set $\X = \LTwo{\Omega}$ and $\Y = H^1(\Omega)$. Then $\Y$ is compactly embedded in $\X$, see for instance \cite[Thm.~9.16]{Bre11} or \cite[Thm.~21.A]{Zei90a}. Note that in this case, we can replace $\|\Mode_i\|_\Y\leq C$ by $\|\partial_\xi \Mode_i\|_\X \leq C$ in \eqref{eq:sPODminimzation:admissibleSet}.
\end{example}

\begin{theorem}
	\label{thm:sPODminimization}
	Let $(\Y,\|\cdot\|_{\Y})$ be a reflexive Banach space which is compactly embedded in $\X$. Moreover, assume that paths $\Path_i\in\LTwo{0,T;\PathSpace_i}$ are given and the transformation operators satisfy \Cref{ass:offline}. Then \eqref{eq:sPODminimizationRestriction} has a solution.
\end{theorem}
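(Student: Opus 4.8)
The plan is to apply the direct method of the calculus of variations. First I would observe that $J\geq 0$ and that $\addSet$ is nonempty (for $C$ large enough one may take any $\Mode_i\in\Y$ with $\|\Mode_i\|_\X=1$ together with $\Coefficient_i\equiv 0$), so the infimum $m\vcentcolon=\inf_{\addSet}J$ is a finite nonnegative number. Let $(\SolROM^{(n)},\Mode^{(n)})_{n\in\mathbb{N}}\subseteq\addSet$ be a minimizing sequence. The constraints defining $\addSet$ give $\|\Coefficient_i^{(n)}\|_{\LTwo{0,T}}\leq C$ and $\|\Mode_i^{(n)}\|_\Y\leq C$ uniformly in $n$. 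Since $\LTwo{0,T;\mathbb{R}^{\dimROM}}$ is a Hilbert space and $\Y$ is reflexive, after passing to a subsequence (not relabeled) I obtain $\SolROM^{(n)}\rightharpoonup\SolROM^\ast$ weakly in $\LTwo{0,T;\mathbb{R}^{\dimROM}}$ and $\Mode_i^{(n)}\rightharpoonup\Mode_i^\ast$ weakly in $\Y$ for each $i$, so in particular $\Coefficient_i^{(n)}\rightharpoonup\Coefficient_i^\ast$ weakly in $\LTwo{0,T}$. The compact embedding $\Y\hookrightarrow\X$ upgrades the mode convergence to strong convergence $\Mode_i^{(n)}\to\Mode_i^\ast$ in $\X$.

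Next I would verify that $(\SolROM^\ast,\Mode^\ast)\in\addSet$. The bounds $\|\Mode_i^\ast\|_\Y\leq C$ and $\|\Coefficient_i^\ast\|_{\LTwo{0,T}}\leq C$ follow from weak lower semicontinuity of the respective norms, and $\Mode_i^\ast\in\Y$ as a weak limit in $\Y$. The delicate constraint is the normalization $\|\Mode_i^\ast\|_\X=1$: the unit sphere of $\X$ is not weakly closed, so weak $\X$-convergence alone would only yield $\|\Mode_i^\ast\|_\X\leq 1$. Here the strong convergence $\Mode_i^{(n)}\to\Mode_i^\ast$ in $\X$ furnished by the compact embedding is exactly what is needed, since then $\|\Mode_i^\ast\|_\X=\lim_n\|\Mode_i^{(n)}\|_\X=1$.

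It remains to establish $J(\SolROM^\ast,\Mode^\ast)\leq\liminf_n J(\SolROM^{(n)},\Mode^{(n)})=m$. I would prove that the lifted approximations $\SolROMlifted^{(n)}(t)\vcentcolon=\sum_{i=1}^{\dimROM}\Coefficient_i^{(n)}(t)\Transformation_i(\Path_i(t))\Mode_i^{(n)}$ converge weakly in $\LTwo{0,T;\X}$ to $\SolROMlifted^\ast$, defined analogously with the limits. Splitting each summand as $\Coefficient_i^{(n)}\Transformation_i(\Path_i)(\Mode_i^{(n)}-\Mode_i^\ast)+(\Coefficient_i^{(n)}-\Coefficient_i^\ast)\Transformation_i(\Path_i)\Mode_i^\ast$, the first part tends to zero strongly in $\LTwo{0,T;\X}$ because $\operatorNorm{\Transformation_i(\eta)}\leq\overline{c}$ and $\|\Coefficient_i^{(n)}\|_{\LTwo{0,T}}\leq C$ while $\Mode_i^{(n)}\to\Mode_i^\ast$ in $\X$. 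For the second part, testing against an arbitrary $\psi\in\LTwo{0,T;\X}$ reduces the pairing to $\int_0^T(\Coefficient_i^{(n)}-\Coefficient_i^\ast)(t)\,g(t)\dt$ with $g(t)\vcentcolon=\ipX{\Transformation_i(\Path_i(t))\Mode_i^\ast,\psi(t)}$; the preceding lemma guarantees $t\mapsto\Transformation_i(\Path_i(t))\Mode_i^\ast\in\LTwo{0,T;\X}$, so $g\in\LTwo{0,T}$ and the integral vanishes by $\Coefficient_i^{(n)}\rightharpoonup\Coefficient_i^\ast$. Hence $\SolROMlifted^{(n)}\rightharpoonup\SolROMlifted^\ast$, so $\Sol-\SolROMlifted^{(n)}\rightharpoonup\Sol-\SolROMlifted^\ast$, and the weak lower semicontinuity of $\|\cdot\|_{\LTwo{0,T;\X}}$ yields the claim; thus $(\SolROM^\ast,\Mode^\ast)$ is a minimizer.

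The main obstacle is twofold, and both difficulties are resolved by the same structural feature: passing to the limit in the bilinear term $\Coefficient_i\Transformation_i(\Path_i)\Mode_i$, which is only weakly--strongly (not weakly--weakly) continuous, and retaining the nonconvex normalization constraint. Both are handled by exploiting that the compact embedding $\Y\hookrightarrow\X$ converts the weak $\Y$-boundedness of the modes into strong $\X$-convergence.
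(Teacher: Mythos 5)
Your proof is correct and follows essentially the same route as the paper: a minimizing sequence, weak compactness from reflexivity, the compact embedding $\Y\hookrightarrow\X$ to upgrade the modes to strong $\X$-convergence (which, as you note, is exactly what preserves the normalization constraint), weak convergence of the bilinear lifting, and weak sequential lower semicontinuity of the norm. The only cosmetic difference is that you verify weak convergence of $\SolROMlifted^{(n)}$ by an explicit weak--strong splitting of the bilinear term, while the paper cites a product-convergence result from Zeidler for the same step.
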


\begin{proof}
	Let $J^\star\geq0$ denote the infimum of $J$ over $\addSet$ and let $(\SolROM^k,\Mode^k)$ denote a sequence in $\addSet$ with $\lim_{k\to\infty} J(\SolROM^k,\Mode^k) = J^\star$. 
	Since $(\SolROM^k,\Mode^k)$ is bounded in $\LTwo{0,T,\mathbb{R}^{\dimROM}}\times \Y^{\dimROM}$, the Eberlein-\u{S}muljian theorem \cite[Thm.~21.D]{Zei90a} ensures that $(\SolROM^k,\Mode^k)$ possesses a weakly convergent subsequence $(\SolROM^{k_n},\Mode^{k_n})$ with limit $(\SolROM^\star,\Mode^\star)$, i.e., $(\SolROM^{k_n},\Mode^{k_n}) \rightharpoonup (\SolROM^\star,\Mode^\star)$ in $\LTwo{0,T;\mathbb{R}^{\dimROM}}\times\Y^{\dimROM}$ for $n\to\infty$. 
	Since $\Y$ is compactly embedded into $\X$ we conclude strong convergence in $\X$ \cite[Prop.~21.35]{Zei90a}, i.e., $\Mode^{k_n}\to \Mode^\star$ in $\X^r$. 
	This immediately implies $\|\Mode_i^\star\|_\X = 1$ for $i=1,\ldots,\dimROM$. Define the bilinear mapping
	\begin{displaymath}
		\beta\colon \LTwo{0,T;\mathbb{R}^{\dimROM}} \times \X^{\dimROM}\to \LTwo{0,T;\X},\qquad (\SolROM,\Mode) \mapsto \sum_{i=1}^\dimROM \Coefficient_i(\cdot)\Transformation_i(\Path_i(\cdot))\Mode_i.
	\end{displaymath}
	For $f\in\LTwo{0,T;\X}$, $\SolROM\in\LTwo{0,T;\mathbb{R}^{\dimROM}}$, and $\Mode\in \X^{\dimROM}$ we compute
	\begin{align*}
		\langle f,\beta(\SolROM,\Mode)\rangle_{\LTwo{0,T;\X}}
		&= \sum_{i=1}^{\dimROM} \int_0^T \Coefficient_i(t) \ipX{ f(t),\Transformation_i(\Path_i(t))\Mode_i} \dt.
	\end{align*}
	Since $\Mode_i^{k_n}\to \Mode_i^\star$ for $n\to\infty$ implies $\ipX{ f(t),\Transformation_i(\Path_i(t))\Mode_i^{k_n}} \to \ipX{ f(t),\Transformation_i(\Path_i(t))\Mode_i^\star}$ for $n\to\infty$ and almost all $t\in(0,T)$, we use \cite[Prop.~21.23(j)]{Zei90a} to infer $\beta(\SolROM^{k_n},\Mode^{k_n}) \rightharpoonup \beta(\SolROM^\star,\Mode^\star)$. 
	The claim now follows from the fact that the norm is weakly sequentially lower semi-continuous \cite[Prop.~21.23(c)]{Zei90a} and thus
	\begin{displaymath}
		J^\star \leq J(\SolROM^\star,\Mode^\star) \leq \liminf_{n\to\infty} J(\SolROM^{k_n},\Mode^{k_n}) = \lim_{k\to\infty} J(\SolROM^k,\Mode^k) = J^\star.\qedhere
	\end{displaymath}
\end{proof}

\begin{remark}
	Instead of restricting the admissible set in \eqref{eq:sPODminimzation:admissibleSet} to bounded coefficients and modes, we can alternatively regularize the cost functional. In more detail, we consider
	\begin{equation}
		\label{eq:sPODminimizationRegularization}
		\left\{\begin{aligned}
		&\min J(\SolROM,\Mode) + \frac{\gamma_1}{2}\|\SolROM\|_{\LTwo{0,T;\RealNumbers^{\dimROM}}}^2 + \frac{\gamma_2}{2}\|\Mode\|_{\Y^{\dimROM}}^2\\
		&\text{\st}\ \Mode\in \Y^{\dimROM} \text{ and } \|\Mode_i\|_\X = 1 \text{ for } i=1,\ldots,\dimROM
	\end{aligned}\right.
	\end{equation}
	with given regularization parameters $\gamma_1,\gamma_2>0$ instead of \eqref{eq:sPODminimizationRestriction}. Note that $\gamma_1,\gamma_2>0$ implies that a minimizing sequence $(\SolROM^k,\Mode^k)$ is bounded and thus one can show existence of a minimizer of \eqref{eq:sPODminimizationRegularization} as in the proof of \Cref{thm:sPODminimization}. In general, we expect that \eqref{eq:sPODminimizationRegularization} is favorable compared to \eqref{eq:sPODminimizationRestriction} from a numerical point of view. This is subject to further investigation.
\end{remark}

Let us emphasize that even in the case that all transformation operators are given by the identity, it is not clear that the minimizer of \eqref{eq:sPODminimizationRestriction} is unique, see \Cref{rem:uniquePODbasis}. In particular the uniqueness depends on the data $\Sol$ and thus without further restrictions on $\Sol$ we cannot expect to establish that the minimizer of \eqref{eq:sPODminimizationRestriction} is unique. 

In order to numerically solve the minimization problem \eqref{eq:sPODminimizationRestriction}, it needs to be discretized in space and time.
	In \cite{SchRM19}, the authors present a method which first discretizes the problem and afterwards solves the fully discrete minimization problem numerically.
	Related approaches are presented in \cite{Rei18,ReiSSM18,RimML18}, where the transformed modes are also identified based on fully discrete problems.
	These approaches have in common that the problem to be solved is already formulated based on a given discretization in space and time.
	
In the case that all modes are transformed by the same operator, i.e., $J$ is given by
\begin{equation}
	\label{eq:sPODminimization:costFunctional:singleShift}
	J(\SolROM,\Mode) \vcentcolon= \frac{1}{2}\int_0^T \left\| \Sol(t)-\Transformation(\Path(t))\sum_{i=1}^{\dimROM} \Coefficient_i(t)\Mode_i\right\|_{\X}^2 \dt,
\end{equation}
we can use the following observation to compute a minimizer of \eqref{eq:sPODminimizationRestriction}. For the special case that the transformation $\Transformation$ is given by the shift operator, this was recognized for instance in \cite{CagMS19,ReiSSM18,SchRM19}.

\begin{theorem}
	\label{thm:sPODminimization:singleTransformation}
	For given data $\Sol\in\LTwo{0,T;\Y}$ and a given path $\Path\in\LTwo{0,T;\PathSpace}$, consider the minimization problem \eqref{eq:sPODminimizationRestriction} with $J$ as defined in \eqref{eq:sPODminimization:costFunctional:singleShift}. 
	Suppose that $\Transformation$ is isometric and satisfies \Cref{ass:offline}. Let $\Mode^\star \vcentcolon= (\Mode_1^\star,\ldots,\Mode_{\dimROM}^\star)$ denote a solution of the \gls{POD} minimization problem \eqref{eq:PODminimization2} with transformed data $\Transformation^*(\Path)\Sol$ with corresponding eigenvalues $\lambda_1\geq\ldots\geq\lambda_{\dimROM} > 0$ as defined in \Cref{thm:PODminimization}. 
	Define $\SolROM^\star = (\Coefficient_1^\star,\ldots,\Coefficient_{\dimROM}^\star)$ via $\SolROM_i^\star \vcentcolon= \ipX{ \Sol,\Mode_i^\star}$ for $i=1,\ldots,\dimROM$. 
	If $C$ in \eqref{eq:sPODminimzation:admissibleSet} satisfies
	\begin{displaymath}
		\max\left\{\frac{1}{2\lambda_{\dimROM}}\left(\|\Sol\|_{\LTwo{0,T,\X}}^2 + \|\Sol\|_{\LTwo{0,T,\Y}}^2\right),\|\Sol\|_{\LTwo{0,T,\X}}\right\} \leq C,
	\end{displaymath}
	then $(\SolROM^\star,\Mode^\star)$ is a minimizer of \eqref{eq:sPODminimizationRestriction}.
\end{theorem}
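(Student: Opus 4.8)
The plan is to move the transformation off the modes and onto the data, thereby reducing the single-transformation problem to the ordinary \gls{POD} minimization \eqref{eq:PODminimization2}, to which \Cref{thm:PODminimization} applies. The crucial algebraic observation is that, since $\Transformation(\Path(t))$ is isometric on $\X$ (equivalently $\Transformation(\Path(t))^\star\Transformation(\Path(t)) = \mathrm{Id}$ and $\operatorNorm{\Transformation(\Path(t))} = 1$), for every $w\in\X$ and almost every $t$ one has
\begin{displaymath}
	\left\|\Sol(t) - \Transformation(\Path(t))w\right\|_\X^2 = \left\|\Transformation(\Path(t))^\star\Sol(t) - w\right\|_\X^2 + \|\Sol(t)\|_\X^2 - \|\Transformation(\Path(t))^\star\Sol(t)\|_\X^2 .
\end{displaymath}
Inserting $w = \sum_{i=1}^{\dimROM}\Coefficient_i(t)\Mode_i$ and integrating shows that on $\addSet$ the functional \eqref{eq:sPODminimization:costFunctional:singleShift} coincides, up to the additive constant $\tfrac12\int_0^T\big(\|\Sol(t)\|_\X^2 - \|\Transformation(\Path(t))^\star\Sol(t)\|_\X^2\big)\dt$ that is independent of $(\SolROM,\Mode)$, with the \gls{POD} cost functional for the transformed data $\Transformation^\star(\Path)\Sol$. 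Hence the two minimization problems have the same minimizers over $\addSet$.

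Next I would apply \Cref{thm:PODminimization} to the transformed data. First one checks that $\Transformation^\star(\Path)\Sol\in\LTwo{0,T;\Y}$: measurability follows as in the Nemytskij-operator argument of the preceding lemma, and boundedness in $\LTwo{0,T;\Y}$ uses that $\Transformation^\star(\Path(t))$ is non-expansive on $\Y$ (as it is for the unitary group actions we have in mind, such as the shift operator). \Cref{thm:PODminimization} then provides orthonormal eigenvectors $\Mode_i^\star\in\Y$ with $\Eigenvalue_i>0$. For any \emph{fixed} modes $\Mode$, minimizing over the coefficients $\Coefficient_i$ is a best-approximation problem in $\Span\{\Mode_1,\ldots,\Mode_{\dimROM}\}$, whose solution is the orthogonal projection; since this span has dimension at most $\dimROM$, its projection error is bounded below by the projection error onto the best $\dimROM$-dimensional subspace, which is exactly the value attained by the \gls{POD} modes. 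I expect \emph{this} to be the main obstacle: unlike \eqref{eq:PODminimization2}, the set $\addSet$ only requires the modes to be normalized rather than orthonormal, so one must argue that dropping orthogonality cannot improve on the \gls{POD} optimum. Granting this, $\Mode^\star$ together with the projection coefficients $\Coefficient_i^\star(t) = \ipX{\Transformation(\Path(t))^\star\Sol(t),\Mode_i^\star} = \ipX{\Sol(t),\Transformation(\Path(t))\Mode_i^\star}$ minimizes the transformed functional over $\addSet$.

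Finally, it remains to verify $(\SolROM^\star,\Mode^\star)\in\addSet$, which is where the lower bound on $C$ enters. The constraint $\|\Mode_i^\star\|_\X = 1$ is immediate from orthonormality. For the coefficients, Cauchy--Schwarz together with $\operatorNorm{\Transformation^\star(\Path(t))}=1$ gives $\|\Coefficient_i^\star\|_{\LTwo{0,T}}\le\|\Transformation^\star(\Path)\Sol\|_{\LTwo{0,T;\X}}\le\|\Sol\|_{\LTwo{0,T;\X}}$. For the $\Y$-bound I would exploit the eigenvalue relation
\begin{displaymath}
	\Mode_i^\star = \frac{1}{\Eigenvalue_i}\int_0^T \ipX{\Transformation(\Path(t))^\star\Sol(t),\Mode_i^\star}\,\Transformation(\Path(t))^\star\Sol(t)\dt ,
\end{displaymath}
take $\|\cdot\|_\Y$ inside the integral, and apply Cauchy--Schwarz to obtain
\begin{displaymath}
	\|\Mode_i^\star\|_\Y \le \frac{1}{\Eigenvalue_i}\|\Coefficient_i^\star\|_{\LTwo{0,T}}\,\|\Transformation^\star(\Path)\Sol\|_{\LTwo{0,T;\Y}} \le \frac{1}{\Eigenvalue_{\dimROM}}\|\Sol\|_{\LTwo{0,T;\X}}\,\|\Sol\|_{\LTwo{0,T;\Y}} .
\end{displaymath}
The elementary estimate $ab\le\tfrac12(a^2+b^2)$ then turns the right-hand side into the first entry of the maximum defining the admissible $C$, so both constraints are met and $(\SolROM^\star,\Mode^\star)$ is the desired minimizer. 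Besides the orthogonality issue above, the only remaining care concerns the bookkeeping that guarantees $\Transformation^\star(\Path)\Sol\in\LTwo{0,T;\Y}$ and the non-expansiveness of $\Transformation^\star$ on $\Y$.
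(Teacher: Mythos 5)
Your proposal is correct and takes essentially the same route as the paper: rewrite the cost via the isometry as a \gls{POD} problem for the transformed data $\Transformation^\star(\Path)\Sol$, observe that relaxing orthonormality of the modes to mere normalization cannot improve on the \gls{POD} optimum (the paper dispatches your ``main obstacle'' in one sentence, with exactly the span/projection argument you sketch), and verify admissibility of $(\SolROM^\star,\Mode^\star)$ with the same Cauchy--Schwarz and Young-type estimates based on the eigenvalue relation. The only differences are refinements within that route: your polarization identity $\left\|\Sol-\Transformation w\right\|_\X^2 = \left\|\Transformation^\star\Sol-w\right\|_\X^2 + \|\Sol\|_\X^2 - \|\Transformation^\star\Sol\|_\X^2$ is valid for non-surjective isometries, whereas the paper simply writes the norm equality, and you explicitly flag the $\Y$-regularity hypotheses ($\Transformation^\star(\Path)\Sol\in\LTwo{0,T;\Y}$ and non-expansiveness of $\Transformation^\star$ on $\Y$) that the paper uses tacitly when bounding $\|\PODoperator\Mode_i^\star\|_\Y$ by $\int_0^T |\Coefficient_i^\star(t)|\,\|\Sol(t)\|_\Y \dt$.
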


\begin{proof}
	Since $\Transformation$ is isometric, we have
	\begin{displaymath}
		J(\SolROM,\Mode) = \frac{1}{2}\int_0^T \left\| \Transformation^*(\Path(t))\Sol(t)-\sum_{i=1}^\dimROM \Coefficient_i(t) \Mode_i \right\|_{\X}^2 \dt.
	\end{displaymath}
	It is easy to see that we can substitute the condition $\|\Mode_i\|_\X = 1$ for $i=1,\ldots,\dimROM$ in the admissible set \eqref{eq:sPODminimzation:admissibleSet} by the condition $\ipX{ \Mode_i,\Mode_j} = \delta_{ij}$ for $i,j=1,\ldots,\dimROM$ without changing the minimum. It thus remains to show that $(\SolROM^\star,\Mode^\star)$ is an element of the admissible set defined in \eqref{eq:sPODminimzation:admissibleSet}. We immediately obtain
	\begin{displaymath}
		\|\Coefficient_i^\star\|_{\LTwo{0,T}}^2 \leq \int_0^T \|\Sol(t)\|_{\X}^2\|\Mode_i^\star\|_{\X}^2 \dt = \|\Sol\|_{\LTwo{0,T;\X}}^2 \leq C.
	\end{displaymath}
	For the estimate of the modes we use the operator $\mathcal{R}$ defined in \eqref{eq:PODoperator} and Young's inequality to obtain
	\begin{align*}
		\|\Mode_i^\star\|_{\Y} &= \frac{1}{\lambda_i}\|\mathcal{R}\Mode_i^\star\|_{\Y} \leq \frac{1}{\lambda_i}\int_0^T |\Coefficient_i^\star(t)|\,\|\Sol(t)\|_{\Y}\dt \leq \frac{1}{2\lambda_i}\left( \|\Coefficient_i^\star\|_{\LTwo{0,T}}^2 + \|\Sol\|_{\LTwo{0,T;\Y}}^2\right)\\
		&\leq \frac{1}{2\lambda_i}\left(\|\Sol\|_{\LTwo{0,T;\X}}^2 + \|\Sol\|_{\LTwo{0,T;\Y}}^2\right) \leq C.\qedhere
	\end{align*}
\end{proof}

\section{Reduced Order Model with Transformed Modes}
\label{sec:onlinePhase}

Suppose now that we are given suitable transformation operators $\Transformation_i$ and have identified a set of dominant modes $\Mode_i$, for instance via the procedure described in \cref{sec:shiftedPOD}. Then we are able to construct a \gls{ROM} for \eqref{eq:FOMPDE} via Galerkin projection, i.e., by substituting the approximation \eqref{eq:multiframeAnsatz} in \eqref{eq:FOMPDE} and formally project the resulting equations onto the time-dependent approximation space
\begin{align}
	\label{eq:ProjectionSubspace}
    \Span \left\{ \Transformation_j(\Path_j(t))\Mode_j \mid j=1,\ldots,\dimROM\right\}.
\end{align}
Since the abstract differential equation \eqref{eq:FOMPDE} involves a differentiation with respect to time, we have to assume that the transformation operators are continuously differentiable. Indeed, we only require that the transformation applied to the respective mode is continuously differentiable and thus make the following assumption. 

\begin{assumption}
	\label{ass:TransformationDifferentiability}
	The mappings $\Transformation_i(\cdot)\Mode_i\colon \PathSpace_i\to\Y$ are continuously differentiable.
\end{assumption}

\begin{example}
	\label{ex:shiftOperator}
	The shift operator $\Transformation(\Path)\Sol = \Sol(\cdot - \Path)$ from \Cref{ex:advectionEquationSPOD} with periodic embedding into $\LTwo{0,1}$ is a strongly continuous semigroup \cite[Ex.~I.5.4]{EngN00}. In particular, semigroup theory implies that $\Transformation(\cdot)\Mode$ is continuously differentiable for all
	\begin{displaymath}
		\Mode\in D(\mathcal{A}) = H^1_{\mathrm{per}}(0,1)
	\end{displaymath}
	see for instance \cite[Cha.\,1,~Thm.\,2.4]{Paz83}.
	We conclude that the shift operator satisfies \Cref{ass:TransformationDifferentiability}, whenever $\Y \subseteq D(\mathcal{A})$.
\end{example}

By abuse of notation we denote the derivative of $\Transformation_i(\cdot)\Mode_i$ at $\Path_i\in\PathSpace_i$ by $\left[\Transformation_i'(\Path_i)\Mode_i\right]\in \mathcal{L}(\PathSpace_i,\X)$.
Recall that for the sake of notation we assume $\PathSpace_i = \mathbb{R}^{\dimPathSpace_i}$ and thus $\PathSpace \vcentcolon = \PathSpace_1 \times \dots \times \PathSpace_{\dimROM} = \mathbb{R}^{\dimPathSpace}$ with $\dimPathSpace \vcentcolon= \sum_{i=1}^\dimROM \dimPathSpace_i$. The Galerkin projection of \eqref{eq:FOMPDE} onto \eqref{eq:ProjectionSubspace} is then given by
\begin{equation}
	\label{eq:ROMtransformed}
	\PODmassMatrixState(\Path(t))\dot{\SolROM}(t) + \PODpathMatrix(\Path(t))D(\SolROM(t))\dot{\Path}(t) = \FROMstate(\Path(t),\SolROM(t))
\end{equation}
with state and path vectors
\begin{align}
	\label{eq:ROMtransformedState}
	\SolROM(t) &\vcentcolon= \begin{bmatrix}
		\Coefficient_1(t) &
		\cdots &
		\Coefficient_{\dimROM}(t)
	\end{bmatrix}^T \in \mathbb{R}^{\dimROM}, & 
	\Path(t) &\vcentcolon= \begin{bmatrix}
		\Path_1^T(t) & \cdots & \Path_{\dimROM}^T(t)
	\end{bmatrix}^T \in \mathbb{R}^{\dimPathSpace},
\end{align}
mass matrix $\PODmassMatrixState(\Path) \vcentcolon= [\ipX{ \Transformation_i(\Path_i)\Mode_i,\Transformation_j(\Path_j)\Mode_j}]_{i,j=1}^{\dimROM}\in\mathbb{R}^{\dimROM\times\dimROM}$, correlation block matrix 
\begin{displaymath}
	\PODpathMatrix(\Path) \vcentcolon= \begin{bmatrix}\ipX{ \Transformation_i(\Path_i)\Mode_i,[\Transformation_j'(\Path_j)\Mode_j]e_1} & \cdots & \ipX{ \Transformation_i(\Path_i)\Mode_i,[\Transformation_j'(\Path_j)\Mode_j]e_{\dimPathSpace_j}} \end{bmatrix}_{i,j=1}^{\dimROM}\in\mathbb{R}^{\dimROM\times\dimPathSpace},
\end{displaymath}
diagonal matrix $D(\SolROM) \vcentcolon= \diag(\Coefficient_1 I_{\dimPathSpace_1},\ldots,\Coefficient_{\dimROM}I_{\dimPathSpace_{\dimROM}})\in\mathbb{R}^{\dimPathSpace\times\dimPathSpace}$,
and right-hand side
\begin{align*}
	\FROMstate(\Path,\SolROM) \vcentcolon= \begin{bmatrix}
		\ipX{ \Transformation_1(\Path_1)\Mode_1, \F(\sum_{i=1}^{\dimROM} \Coefficient_i\Transformation_i(\Path_i)\Mode_i} \\
		\vdots \\
		\ipX{ \Transformation_{\dimROM}(\Path_{\dimROM})\Mode_{\dimROM}, \F(\sum_{i=1}^{\dimROM} \Coefficient_i\Transformation_i(\Path_i)\Mode_i}
	\end{bmatrix}.
\end{align*}
Hereby, $e_i$ denotes the $i$th unit vector of suitable dimension, such that $(e_1,\ldots,e_{\dimPathSpace_i})$ forms a basis of $\PathSpace_i = \mathbb{R}^{\dimPathSpace_i}$. 

As in \cref{sec:offlineOnlineDecomposition}, the right-hand side $\FROMstate$ still depends on the original space $\Y$ and requires further simplifications. For instance, if $\F$ is given by a quadratic polynomial of the form $\F(\Sol) = \mathcal{N}(\Sol\otimes\Sol)$ with linear operator $\mathcal{N}\colon \Y\otimes\Y\to\X$ we can (for given $\Path$) precompute the quantities $\ipX{ \Transformation_j(\Path_j)\Mode_j,\mathcal{N}(\Transformation_i(\Path_i)\Mode_i\otimes \Transformation_\ell(\Path_\ell)\Mode_\ell)}$ for $i,j,\ell=1,\ldots,\dimROM$. A further simplification is possible if $\F$ is \emph{equivariant} with respect to the transformation operators $\Transformation_i$ (see the upcoming \Cref{ass:equivarianceGroupAction} and the discussion thereafter for further details). Let us mention that parameter separability (see~\Cref{rem:paramSeparability}) is easily retained in the \gls{ROM} \eqref{eq:ROMtransformed}.

\begin{remark}
	In contrast to \gls{POD} we cannot ensure that $\PODmassMatrixState(\Path)$ is nonsingular for every $\Path\in\PathSpace$, since some of the modes $\Transformation_i(\Path_i)\Mode_i$ may become linearly dependent (see \Cref{ex:waveEquation}). 
	This may happen either at single time points or at a complete time-interval. 
	In the latter case this implies that some of the modes are redundant and can be removed during this interval. 
	In any case, whenever $\PODmassMatrixState(\Path)$ becomes singular we have to restart the computation of the reduced model.
\end{remark}

It is clear that \eqref{eq:ROMtransformed} is not sufficient to compute $\SolROM$ and $\Path$ and hence can be understood as underdetermined \gls{DAE}, cf.\,\cite{KunM06}. To complete the underdetermined \gls{DAE} \eqref{eq:ROMtransformed} we have to add additional equations
\begin{equation}
	\label{eq:phaseConditions}
	\PhaseCondition(\Path,\dot{\Path},\SolROM,\dot{\SolROM}) = 0
\end{equation}
and consider the coupled system \eqref{eq:ROMtransformed} and \eqref{eq:phaseConditions}. 
In the literature, these equations are called \emph{phase conditions} \cite{BeyT04, OhlR13} or \emph{reconstruction equations} \cite{RowKML03,RowM00} and are used to determine the path $\Path(t)$ along the solution $\SolROM$. 
Although several choices for $\PhaseCondition$ are proposed in \cite{BeyT04}, it is not clear a-priori, which phase condition benefits the model the most. 
Since our \gls{ROM} is obtained via Galerkin projection, which satisfies the continuous optimality principle (see \Cref{lem:continuousOptimality} and \cite{CarBA17,LeeC19}), we propose to construct the phase condition also via continuous optimality. 
More precisely we define
\begin{equation}
	\label{eq:sPODconstraint}
	\PhaseCondition_{\mathrm{Res}}(\Path,\dot{\Path},\SolROM,\dot{\SolROM}) \vcentcolon= D(\SolROM)^T\left(\PODpathMatrix(\Path)^T\dot{\SolROM} + \PODmassMatrixPath(\Path)D(\SolROM)\dot{\Path} - \FROMpath(\Path,\SolROM)\right)
\end{equation}
with block mass matrix
\begin{equation}
	\label{eq:PhaseConstraingMatrices}
	\begin{aligned}
		\PODmassMatrixPath(\Path) &\vcentcolon= \begin{bmatrix}\ipX{ [\Transformation_i'(\Path_i)\Mode_i]e_1,[\Transformation_j'(\Path_j)\Mode_j]e_1} & \cdots & \ipX{ [\Transformation_i'(\Path_i)\Mode_i]e_1,[\Transformation_j'(\Path_j)\Mode_j]e_{\dimPathSpace_j}} \\
		\vdots & & \vdots\\
		\ipX{ [\Transformation_i'(\Path_i)\Mode_i]e_{\dimPathSpace_i},[\Transformation_j'(\Path_j)\Mode_j]e_1} & \cdots & \ipX{ [\Transformation_i'(\Path_i)\Mode_i]e_{\dimPathSpace_i},[\Transformation_j'(\Path_j)\Mode_j]e_{\dimPathSpace_j}}\end{bmatrix}_{i,j=1}^{\dimROM}\hspace*{-2em} \in \mathbb{R}^{\dimPathSpace\times\dimPathSpace}
	\end{aligned}
\end{equation}
and reduced right-hand side
\begin{displaymath}
		\FROMpath(\Path,\SolROM) \vcentcolon= \begin{bmatrix} \ipX{ [\Transformation_i'(\Path_i)\Mode_i]e_1, \F\left(\sum_{j=1}^{\dimROM}\Coefficient_j\Transformation_j(\Path_j)\Mode_j\right)}\\
		\vdots\\
		\ipX{ [\Transformation_i'(\Path_i)\Mode_i]e_{\dimPathSpace_i}, \F\left(\sum_{j=1}^{\dimROM}\Coefficient_j\Transformation_j(\Path_j)\Mode_j\right)}
		 \end{bmatrix}_{i=1}^{\dimROM}\hspace*{-1.5em} \in \mathbb{R}^{\dimPathSpace}.
\end{displaymath}
The coupled \gls{ROM} for the reduced state $\SolROM$ and the transformation path $\Path$ is thus given by
\begin{subequations}
	\label{eq:ROMcoupled}
	\begin{align}
		\label{eq:ROMcoupled:ROM}\PODmassMatrixState(\Path(t))\dot{\SolROM}(t) + \PODpathMatrix(\Path(t))D(\SolROM(t))\dot{\Path}(t) &= \FROMstate(\Path(t),\SolROM(t)),\\
		\label{eq:ROMcoupled:PhaseConstraint}D(\SolROM(t))^T\PODpathMatrix(\Path(t))^T\dot{\SolROM}(t) + D(\SolROM(t))^T\PODmassMatrixPath(\Path(t))D(\SolROM(t))\dot{\Path}(t) &= D(\SolROM(t))^T\FROMpath(\Path(t),\SolROM(t)),
	\end{align}
\end{subequations}
or equivalently in matrix notation
\begin{equation}
	\begin{bmatrix}
		I_{\dimROM} & 0\\
		0 & D(\SolROM)^T
	\end{bmatrix}
	\begin{bmatrix}
		\PODmassMatrixState(\Path) & \PODpathMatrix(\Path) \\
		\PODpathMatrix(\Path)^T & \PODmassMatrixPath(\Path)
	\end{bmatrix}\begin{bmatrix}
		I_{\dimROM} & 0\\
		0 & D(\SolROM)
	\end{bmatrix}\begin{bmatrix}
		\dot{\SolROM}\\
		\dot{\Path}
	\end{bmatrix} = \begin{bmatrix}
		\FROMstate(\Path,\SolROM)\\
		D(\SolROM)^T\FROMpath(\Path,\SolROM)
	\end{bmatrix}.
\end{equation}
\begin{remark}
	The phase condition \eqref{eq:ROMcoupled:PhaseConstraint} can be obtained from \eqref{eq:FOMPDE} by substituting the ansatz \eqref{eq:multiframeAnsatz} and enforcing that the residual is orthogonal to
	\begin{equation}
		\label{eq:PhaseConditionProjectionSpace}
		\Span\left\{ \Coefficient_i[\Transformation_i'(\Path_i)\Mode_i]e_j \mid i=1,\ldots,\dimROM, j=1,\ldots,\dimPathSpace_i\right\}.
	\end{equation}
	Thus the phase condition \eqref{eq:ROMcoupled:PhaseConstraint} can be obtained via projection onto the space in \eqref{eq:PhaseConditionProjectionSpace}.
\end{remark}
For $t>0$ we define $\mathscr{R}\colon (0,T]\times \mathbb{R}^{\dimROM}\times \PathSpace \to \X$ via
\begin{equation*}
	\mathscr{R}(t,x,\eta) = \sum_{i=1}^\dimROM x_i \Transformation_i(\Path_i(t))\Mode_i + \sum_{i=1}^r \Coefficient_i(t) \left[\Transformation_i'(\Path_i(t))\Mode_i\right]\eta_i - \F\left(\sum_{i=1}^{\dimROM}\Coefficient_i(t)\Transformation_i(\Path_i(t))\Mode_i\right)
\end{equation*}
such that the residual that is obtained at time $t>0$ by substituting the ansatz~\eqref{eq:multiframeAnsatz} into the evolution equation~\eqref{eq:FOMPDE} is given by $\mathscr{R}(t,\dot{\SolROM}(t),\dot{\Path}(t))$.

\begin{theorem}[Continuous optimality]
	\label{thm:continuousOptimality}
	The \gls{ROM} \eqref{eq:ROMcoupled} is continuously optimal in the sense that if $(\SolROM,\Path)$ is a solution of \eqref{eq:ROMcoupled}, then for each $t>0$, the pair $(\dot{\SolROM}(t),\dot{\Path}(t))$ is a minimizer of the norm of $\mathscr{R}$, i.e.
	\begin{displaymath}
		\|\mathscr{R}(t,\dot{\SolROM}(t),\dot{\Path}(t))\|_\X \leq \|\mathscr{R}(t,x,\eta)\|_\X\qquad\text{for all $(x,\eta)\in\mathbb{R}^{\dimROM}\times \PathSpace$}.
	\end{displaymath}
\end{theorem}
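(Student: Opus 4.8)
The plan is to fix $t>0$ and exploit that the residual map $\mathscr{R}(t,\cdot,\cdot)\colon\mathbb{R}^{\dimROM}\times\PathSpace\to\X$ is affine in $(x,\eta)$: writing $\mathscr{R}(t,x,\eta)=L_t(x,\eta)+b_t$, where $L_t$ is the linear part formed by the two sums and $b_t=-\F(\sum_{i=1}^{\dimROM}\Coefficient_i(t)\Transformation_i(\Path_i(t))\Mode_i)$ is the constant part, the minimization of $\|\mathscr{R}(t,x,\eta)\|_\X$ becomes a linear least-squares problem in the Hilbert space $\X$. The range of $L_t$ is the finite-dimensional (hence closed) subspace spanned by the vectors $\Transformation_i(\Path_i(t))\Mode_i$ for $i=1,\ldots,\dimROM$ and $\Coefficient_i(t)[\Transformation_i'(\Path_i(t))\Mode_i]e_j$ for $i=1,\ldots,\dimROM$, $j=1,\ldots,\dimPathSpace_i$, i.e.\ precisely the span \eqref{eq:ProjectionSubspace} together with \eqref{eq:PhaseConditionProjectionSpace}.

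Second, I would invoke the standard characterization of least-squares minimizers. Since $(x,\eta)\mapsto\|\mathscr{R}(t,x,\eta)\|_\X^2$ is convex (the composition of an affine map with the convex squared norm), a point is a global minimizer if and only if it is stationary, which by the projection theorem is equivalent to $\mathscr{R}(t,x,\eta)$ being orthogonal to the range of $L_t$, that is, orthogonal to each spanning vector. Differentiating the squared norm (equivalently, pairing $\mathscr{R}$ with the generators of the range) yields the normal equations
\[
\ipX{\Transformation_\ell(\Path_\ell(t))\Mode_\ell,\mathscr{R}(t,x,\eta)}=0
\qquad\text{and}\qquad
\Coefficient_\ell(t)\ipX{[\Transformation_\ell'(\Path_\ell(t))\Mode_\ell]e_m,\mathscr{R}(t,x,\eta)}=0
\]
for all admissible indices $\ell,m$, where the factor $\Coefficient_\ell(t)$ in the second family comes from $\partial_{\eta_{\ell,m}}\mathscr{R}=\Coefficient_\ell(t)[\Transformation_\ell'(\Path_\ell(t))\Mode_\ell]e_m$.

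Third, the core of the argument is a bookkeeping step. I would expand each inner product using the definition of $\mathscr{R}$ and match the terms against the definitions of $\PODmassMatrixState$, $\PODpathMatrix$, $D$, and $\FROMstate$ for the first family, and against $\PODpathMatrix^T$, $\PODmassMatrixPath$, $D$, and $\FROMpath$ for the second. The first family reproduces \eqref{eq:ROMcoupled:ROM} verbatim with $(x,\eta)=(\dot{\SolROM}(t),\dot{\Path}(t))$, and the second family — after collecting the common factor $\Coefficient_\ell(t)$ into the left-multiplication by $D(\SolROM)^T$ — reproduces the phase condition \eqref{eq:ROMcoupled:PhaseConstraint}. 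Since $(\SolROM,\Path)$ solves \eqref{eq:ROMcoupled} by hypothesis, the normal equations hold at $(x,\eta)=(\dot{\SolROM}(t),\dot{\Path}(t))$, so this pair is a minimizer.

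The only delicate point is that the weight $\Coefficient_\ell(t)$ sits in front of the derivative modes in \eqref{eq:PhaseConditionProjectionSpace}, so the second block of orthogonality conditions naturally carries the diagonal factor $D(\SolROM)^T$ rather than being the bare orthogonality against $[\Transformation_\ell'(\Path_\ell(t))\Mode_\ell]e_m$; this is exactly the origin of the $D(\SolROM)^T$ appearing in \eqref{eq:ROMcoupled:PhaseConstraint} (cf.\ the remark preceding the theorem). I would also stress that, in contrast to \Cref{lem:continuousOptimality}, uniqueness is \emph{not} claimed: when the transformed modes and their derivatives become linearly dependent, as happens in \Cref{ex:waveEquation}, the map $L_t$ fails to be injective, so although the optimal residual $\mathscr{R}(t,\dot{\SolROM}(t),\dot{\Path}(t))$ is unique, the optimal pair need not be — hence the statement asserts only that $(\dot{\SolROM}(t),\dot{\Path}(t))$ is \emph{a} minimizer.
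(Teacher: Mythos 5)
Your proposal is correct and follows essentially the same route as the paper: both arguments rest on the convexity of $(x,\eta)\mapsto\|\mathscr{R}(t,x,\eta)\|_\X^2$ (the squared norm of an affine map), so that the first-order conditions are sufficient, and both verify that these conditions --- your normal equations, the paper's explicit partial derivatives with respect to $x_\ell$ and $\eta_\ell$, including the factor $\Coefficient_\ell(t)$ that produces $D(\SolROM)^T$ --- coincide exactly with \eqref{eq:ROMcoupled}. Your projection-theorem framing and the closing remark on non-uniqueness of the minimizing pair are accurate glosses rather than a different argument.
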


\begin{proof}
	Let $t>0$. We first notice that $\|\mathscr{R}(t,x,\eta)\|_\X^2$ is convex in $(x,\eta)$ and hence the first-order necessary optimality condition is also sufficient. The partial derivatives with respect to the first variable are given by
	\begin{multline*}
		\frac{\partial}{\partial x_\ell} \|\mathscr{R}(t,x,\eta)\|_\X^2 = 2\sum_{i=1}^{\dimROM} x_i \ipX{ \Transformation_\ell(\Path_\ell(t))\Mode_\ell,\Transformation_i(\Path_i(t))\Mode_i}\\
		 + 2\sum_{i=1}^{\dimROM} \Coefficient_i(t) \ipX{ \Transformation_\ell(\Path_\ell(t))\Mode_\ell,\left[\Transformation_i'(\Path_i(t))\Mode_i\right]\eta_i} - 2\ipX{ \Transformation_\ell(\Path_\ell(t))\Mode_\ell, \F\left(\sum_{i=1}^{\dimROM} \Coefficient_i(t)\Transformation_i(\Path_i(t))\Mode_i\right)}
	\end{multline*}
	for $\ell=1,\ldots,\dimROM$. 
	The partial derivatives with respect to the second variable constitute linear mappings $\frac{\partial}{\partial \eta_\ell} \|\mathscr{R}(t,x,\eta)\|_\X^2 \colon \PathSpace_\ell \to \mathbb{R}$ given by
	\begin{multline*}
		\frac{\partial}{\partial \eta_\ell} \|\mathscr{R}(t,x,\eta)\|_\X^2(\lambda_\ell) = 2 \sum_{i=1}^{\dimROM} \Coefficient_\ell(t)\Coefficient_i(t) \ipX{ \left[\Transformation_\ell'(\Path_\ell(t))\Mode_\ell\right]\lambda_\ell, \left[\Transformation_i'(\Path_i(t))\Mode_i\right]\eta_i} \\
		+ 2 \sum_{i=1}^{\dimROM} x_i\Coefficient_\ell(t) \ipX{ \left[\Transformation_\ell'(\Path_\ell(t))\Mode_\ell\right]\lambda_\ell,\Transformation_i(\Path_i(t))\Mode_i}\\
		- 2\Coefficient_\ell(t)\ipX{ \left[\Transformation_\ell'(\Path_\ell(t))\Mode_\ell\right]\lambda_\ell, \F\left(\sum_{i=1}^{\dimROM} \Coefficient_i(t)\Transformation_i(\Path_i(t))\Mode_i\right)}.
	\end{multline*}
	Choosing the standard basis $(e_1,\ldots,e_{\dimPathSpace_\ell})$ for $\PathSpace_\ell$ and using the notation above implies that the first-order necessary condition is given by
	\begin{equation}
		\label{eq:ContinousOptimalityNecessaryCondition}
		\begin{aligned}
			\PODmassMatrixState(\Path(t))x+ \PODpathMatrix(\Path(t))D(\SolROM(t))\eta &= \FROMstate(\Path(t),\SolROM(t)),\\
			D(\SolROM(t))^T\PODpathMatrix(\Path(t))^Tx + D(\SolROM(t))^T\PODmassMatrixPath(\Path(t))D(\SolROM(t))\eta &= D(\SolROM(t))^T\FROMpath(\Path(t),\SolROM(t)).
		\end{aligned}
	\end{equation}	
	Since $(\SolROM,\Path)$ is a solution of \eqref{eq:ROMcoupled} we conclude that $(\dot\SolROM,\dot{\Path})$ is a solution of \eqref{eq:ContinousOptimalityNecessaryCondition} and thus a minimizer of $\|\mathscr{R}(t,x,\eta)\|_\X$.
\end{proof}

\begin{remark}
	The proof of \Cref{thm:continuousOptimality} shows that instead of using the standard basis of $\PathSpace_i = \mathbb{R}^{\dimPathSpace_i}$ it is possible to use any basis of $\PathSpace_i$ for the construction of the \gls{ROM} \eqref{eq:ROMcoupled}.
\end{remark}

If all transformations are chosen constant, then it is easy to see that the phase condition \eqref{eq:phaseConditions} is satisfied for any $(\SolROM,\Path)$ and hence the \gls{ROM} \eqref{eq:ROMcoupled} may not have a unique solution. We immediately conclude that the minimizer in \Cref{thm:continuousOptimality} may not be unique. On the other hand, by virtue of \Cref{ass:TransformationDifferentiability}, the matrix
\begin{equation}
	\label{eq:ROMcoupled:massMatrix}
	M(\Path)\vcentcolon= \begin{bmatrix}
		\PODmassMatrixState(\Path) & \PODpathMatrix(\Path) \\
		\PODpathMatrix(\Path)^T & \PODmassMatrixPath(\Path)
	\end{bmatrix}\in\mathbb{R}^{(\dimROM+\dimPathSpace)\times(\dimROM+\dimPathSpace)}
\end{equation}
is continuous with respect to $\Path$ and thus, if we assume that $M(\Path(0))$ is nonsingular, then there exists a neighborhood $\mathcal{U}\subseteq\mathbb{R}^{\dimPathSpace}$ around $\Path(0)$ such that $M(\Path(0))$ is nonsingular for all $\Path\in\mathcal{U}$. We conclude that $M(\Path)^{-1}$ is continuous for all $\Path\in\mathcal{U}$. As a direct consequence we have proven the following result.

\begin{prop}
	\label{thm:wellPosedROM}
	Let $(\SolROM_{(0)},\Path_{(0)})$ denote the initial value for \eqref{eq:ROMcoupled}, i.e.,
	\begin{equation}
		\label{eq:ROMcoupled:IC}
		\SolROM(0) = \SolROM_{(0)}\qquad\text{and}\qquad
		\Path(0) = \Path_{(0)}.
	\end{equation}
	Assume that \smash{$M(\Path_{(0)})$} in \eqref{eq:ROMcoupled:massMatrix} is nonsingular, $e_i^T\SolROM_{(0)} \neq 0$ for all $i=1,\ldots,\dimROM$, and the transformation operators satisfy \Cref{ass:TransformationDifferentiability}. If $\F$ is continuous, then there exists $\widetilde{T}>0$ such that the \gls{ROM} \eqref{eq:ROMcoupled} has a (classical) solution on $[0,\widetilde{T})$. If the transformation operators and $\F$ are sufficiently smooth, then the solution is unique.
\end{prop}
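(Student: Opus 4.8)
The plan is to recast the coupled system \eqref{eq:ROMcoupled} as an explicit ordinary differential equation for $y\vcentcolon=(\SolROM,\Path)$ and then apply the classical existence and uniqueness theory. Setting $\Lambda(\SolROM)\vcentcolon=\diag(I_{\dimROM},D(\SolROM))$ and using that $D(\SolROM)$ is symmetric, the matrix form of \eqref{eq:ROMcoupled} reads
\begin{equation*}
	\Lambda(\SolROM)\,M(\Path)\,\Lambda(\SolROM)\begin{bmatrix}\dot{\SolROM}\\\dot{\Path}\end{bmatrix} = \begin{bmatrix}\FROMstate(\Path,\SolROM)\\D(\SolROM)\FROMpath(\Path,\SolROM)\end{bmatrix} =\vcentcolon G(\SolROM,\Path),
\end{equation*}
with $M$ as in \eqref{eq:ROMcoupled:massMatrix}. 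The entire statement therefore hinges on the invertibility of the mass matrix $\Lambda(\SolROM)M(\Path)\Lambda(\SolROM)$ in a neighborhood of the initial datum.

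First I would check nonsingularity at $t=0$. By hypothesis $M(\Path_{(0)})$ is nonsingular, and since $e_i^T\SolROM_{(0)}\neq 0$ for every $i$, the block-diagonal matrix $D(\SolROM_{(0)})$ has only nonsingular diagonal blocks and is therefore invertible; hence so is $\Lambda(\SolROM_{(0)})$, and consequently the product $\Lambda(\SolROM_{(0)})M(\Path_{(0)})\Lambda(\SolROM_{(0)})$ is nonsingular. I would then propagate this to a neighborhood: by \Cref{ass:TransformationDifferentiability} the entries of $M(\Path)$ are continuous in $\Path$, and $\Lambda(\SolROM)$ depends polynomially, hence continuously, on $\SolROM$, so the map $(\SolROM,\Path)\mapsto\det\bigl(\Lambda(\SolROM)M(\Path)\Lambda(\SolROM)\bigr)$ is continuous and nonzero at $(\SolROM_{(0)},\Path_{(0)})$. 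Thus there is an open neighborhood $\mathcal{V}$ of $(\SolROM_{(0)},\Path_{(0)})$ on which the mass matrix is invertible with continuous inverse.

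On $\mathcal{V}$ I would solve for the velocities, obtaining the explicit initial value problem
\begin{equation*}
	\begin{bmatrix}\dot{\SolROM}\\\dot{\Path}\end{bmatrix} = \bigl(\Lambda(\SolROM)M(\Path)\Lambda(\SolROM)\bigr)^{-1}G(\SolROM,\Path) =\vcentcolon f(\SolROM,\Path),\qquad (\SolROM(0),\Path(0))=(\SolROM_{(0)},\Path_{(0)}).
\end{equation*}
Since $\F$ is continuous, the inner products defining $\FROMstate$ and $\FROMpath$ are continuous in $(\SolROM,\Path)$, so $G$ is continuous; together with the continuity of the inverse mass matrix this makes $f$ continuous on $\mathcal{V}$. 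Peano's existence theorem then yields a classical ($C^1$) solution on some interval $[0,\widetilde{T})$, which proves the first assertion. For uniqueness I would merely upgrade the regularity: if $\F$ and the maps $\Transformation_i(\cdot)\Mode_i$ are sufficiently smooth (e.g.\ continuously differentiable, so that all entries of $M$, $\Lambda$, and $G$ are locally Lipschitz), then $f$ is locally Lipschitz on $\mathcal{V}$, and the Picard--Lindel\"of theorem gives local uniqueness.

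The main obstacle is precisely the invertibility of the combined mass matrix $\Lambda(\SolROM)M(\Path)\Lambda(\SolROM)$, which genuinely requires both hypotheses: nonsingularity of $M(\Path_{(0)})$ and the nondegeneracy condition $e_i^T\SolROM_{(0)}\neq 0$, the latter being exactly what guarantees that $D(\SolROM_{(0)})$, and therefore $\Lambda(\SolROM_{(0)})$, is invertible. Without the condition on the coefficients the system cannot be solved for the derivatives at $t=0$, in line with the observation that constant transformations make the phase condition degenerate. Once this invertibility is secured and extended to a neighborhood by continuity, the remainder is a routine application of the classical ODE existence and uniqueness theorems.
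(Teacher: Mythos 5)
Your proposal is correct and takes essentially the same route as the paper: the paper likewise writes \eqref{eq:ROMcoupled} in the symmetric matrix form with mass matrix $\begin{bmatrix} I & 0\\ 0 & D(\SolROM)^T\end{bmatrix} M(\Path) \begin{bmatrix} I & 0\\ 0 & D(\SolROM)\end{bmatrix}$, uses the continuity of $M(\Path)$ from \Cref{ass:TransformationDifferentiability} together with nonsingularity of $M(\Path_{(0)})$ and of $D(\SolROM_{(0)})$ (the latter from $e_i^T\SolROM_{(0)}\neq 0$) to invert in a neighborhood of the initial datum, and then concludes by classical ODE theory (Peano for existence, Picard--Lindel\"of under additional smoothness for uniqueness). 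Your write-up merely spells out the details the paper leaves as ``a direct consequence.''
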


\begin{remark}
	\label{rem:issueMultiframeApproach}
	The approximation ansatz \eqref{eq:multiframeAnsatz} suffers from the fact that at each time $t\in[0,T)$ with $\Coefficient_i(t)=0$ we cannot expect to determine any information on $\Path_i(t)$. This drawback results in the rather restrictive assumption $e_i^T\SolROM_{(0)}\neq 0$ for all $i=1,\ldots,\dimROM$ in \Cref{thm:wellPosedROM}. We can mitigate this restriction by enforcing the same transformation and the same path for a couple of modes as proposed in \eqref{eq:multframeAnsatzGroup}. In this case, we only have to ensure that for each of the transformations one single coefficient of the initial value is nonzero. This means that the initial condition has to contribute to every reference frame that we are interested in.
\end{remark}

\begin{remark}
	\label{rem:MFEMregularization}
	In principle, the \gls{MFEM} may also suffer from a possible degenerate mass matrix. To circumvent this issue, a regularization is proposed in \cite{MilM81} to prevent nodes to move arbitrarily. In our setting, this corresponds to adding a regularization term for the path variable, respectively its derivative.
\end{remark}

In order to apply \Cref{thm:wellPosedROM}, respectively \Cref{rem:issueMultiframeApproach}, we have to discuss how to choose the initial values $\SolROM_{(0)}$ and $\Path_{(0)}$. Following our general approximation \eqref{eq:multiframeAnsatz} we thus have to find a minimizer for the optimization problem
\begin{equation}
	\label{eq:ICminimization}
	\min_{\Path_{(0)}, \SolROM_{(0)}} J_{\mathrm{IV}} \vcentcolon= \left\|\InitialCondition - \sum_{i=1}^{\dimROM} \Coefficient_{(0),i}\Transformation_i(\Path_{(0),i})\Mode_i\right\|_\X^2.
\end{equation}
The first-order optimality condition is given by
\begin{subequations}
	\label{eq:ICfirstOrder}
\begin{align}
	\label{eq:ICfirstOrder:a}\PODmassMatrixState(\Path_{(0)})\SolROM_{(0)} &= \left[\ipX{ \InitialCondition, \Transformation_i(\Path_{(0),i})\Mode_i}\right]_{i=1}^{\dimROM} =\vcentcolon b_\mathrm{z}(\Path_{(0)}),\\	
	\label{eq:ICfirstOrder:b}D(\SolROM_{(0)})^T\PODpathMatrix(\Path_{(0)})^T\SolROM_{(0)} &= D(\SolROM_{(0)})^T\begin{bmatrix}
		\ipX{ \InitialCondition,[\Transformation'_i(\Path_{(0),i})\Mode_i]e_1}\\
		\vdots\\
		\ipX{ \InitialCondition,[\Transformation'_i(\Path_{(0),i})\Mode_i]e_{\dimPathSpace_i}}
	\end{bmatrix}_{i=1}^r \!\!\!\!\!\!=\vcentcolon D(\SolROM_{(0)})^T b_{\mathrm{p}}(\Path_{(0)}).
\end{align}
\end{subequations}
We immediately notice that if $\PODmassMatrixState(\Path_{(0)})$ is singular, then in general we cannot expect a solution of \eqref{eq:ICfirstOrder:a}. On the other hand, if $\PODmassMatrixState(\Path_{(0)})$ is nonsingular, then we can solve the first equation for $\SolROM(0)$ and it is easy to see that in this case $\Path_{(0)}$ has to satisfy
\begin{equation}
	\label{eq:ICfirstOrder:Path}
	D(\SolROM_{(0)})^T \PODpathMatrix(\Path_{(0)})^T\PODmassMatrixState(\Path_{(0)})^{-1} b_{\mathrm{z}}(\Path_{(0)}) - D(\SolROM_{(0)})^T b_{\mathrm{p}}(\Path_{(0)}) = 0.
\end{equation}
In order to apply the inverse function theorem, we need additional smoothness of the mappings in \Cref{ass:TransformationDifferentiability}. In the context of semigroups, this imposes stronger restrictions on the modes $\Mode_i$. 
Instead, we simply assume that we have initial values $\SolROM_{(0)}$ and $\Path_{(0)}$ available, such that the approximation error $J_{\mathrm{IV}}(\SolROM_{(0)},\Path_{(0)})$ is sufficiently small. If we pick $\Path_{(0)}$ such that $\Transformation_i(\Path_{(0),i}) = \mathrm{Id}_\X$ is the identity on $\X$, then \eqref{eq:ICfirstOrder:a} simply describes the projection of the initial condition on the dominant modes.

For the remainder of this section we analyze the special case, where the right-hand side of \eqref{eq:FOMPDE} is given by a linear operator $\mathcal{A}\colon D(\mathcal{A})\to\X$, i.e., $\F(\Sol) = \mathcal{A}
\Sol$. Assuming additionally that $\mathcal{A}$ is the generator of a strongly continuous semigroup \cite[Cha.~1, Def.~2.1]{Paz83} allows us immediately to establish a simple a-posteriori error bound. Note that in this case $D(\mathcal{A})$ is a dense subspace of $\X$ such that we can choose $\Y \vcentcolon= D(\mathcal{A})$. 

\begin{theorem}[A-posteriori error bound]
	\label{thm:errorBound}
	Let $\Sol$ denote the solution of the \gls{FOM} \eqref{eq:FOMPDE} with initial value $\InitialCondition\in D(\mathcal{A}) = \Y$, linear right-hand side $\F(\Sol) = \mathcal{A}\Sol$ and suppose that $\mathcal{A}$ is the generator of a strongly continuous semigroup $\{S(t)\}_{t\geq 0}$. Assume that the assumptions from \Cref{thm:wellPosedROM} are satisfied and that the mappings $t\mapsto \Transformation_i(\Path_i(t))\Mode_i$ are twice continuously differentiable. For the unique solution $(\SolROM,\Path)$ of the \gls{ROM} \eqref{eq:ROMcoupled} with initial condition \eqref{eq:ROMcoupled:IC} define the error $\varepsilon \vcentcolon= \Sol - \sum_{i=1}^{\dimROM} \SolROM_i\Transformation_i(\Path_i)\Mode_i$.
	Then there exist constants $\widetilde{T},\widetilde{C},\omega\geq 0$ independent of the modes $\Mode_i$ and the transformation operators $\Transformation_i$ such that
	\begin{equation}
		\label{eq:posterioriErrorBound}
		\|\varepsilon(t)\|_\X \leq \widetilde{C}\mathrm{e}^{\omega t}\left(J_{\mathrm{IV}}(\SolROM_{(0)},\Path_{(0)}) + t\|\mathscr{R}(\cdot,\dot{\SolROM},\dot{\Path})\|_{L^\infty(0,t;\X)}\right)
	\end{equation}
	for $t\in [0,\widetilde{T})$.
\end{theorem}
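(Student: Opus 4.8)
The plan is to reduce the assertion to the classical variation-of-constants (Duhamel) formula for the $C_0$-semigroup $\{S(t)\}_{t\geq 0}$, combined with the standard growth estimate $\operatorNorm{S(t)}\leq \widetilde{C}\mathrm{e}^{\omega t}$ that every strongly continuous semigroup satisfies (cf.\ \cite{Paz83}); here one may take $\widetilde{C}\geq 1$ and $\omega\geq 0$ without loss of generality. Crucially, these constants originate solely from the semigroup generated by $\mathcal{A}$, so they are automatically independent of the modes $\Mode_i$ and the transformation operators $\Transformation_i$, as the statement requires.

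First I would set $\SolROMlifted(t)\vcentcolon=\sum_{i=1}^{\dimROM}\SolROM_i(t)\Transformation_i(\Path_i(t))\Mode_i$, so that $\varepsilon=\Sol-\SolROMlifted$. Using \Cref{ass:TransformationDifferentiability} and the chain rule we have $\frac{\mathrm d}{\mathrm d t}\big[\Transformation_i(\Path_i(t))\Mode_i\big]=[\Transformation_i'(\Path_i(t))\Mode_i]\dot{\Path}_i(t)$, and the product rule gives $\dot{\SolROMlifted}(t)=\sum_{i=1}^{\dimROM}\dot{\SolROM}_i(t)\Transformation_i(\Path_i(t))\Mode_i+\sum_{i=1}^{\dimROM}\SolROM_i(t)[\Transformation_i'(\Path_i(t))\Mode_i]\dot{\Path}_i(t)$. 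Comparing this with the definition of $\mathscr{R}$ and using $\F=\mathcal{A}$, the residual is exactly $\mathscr{R}(t,\dot{\SolROM}(t),\dot{\Path}(t))=\dot{\SolROMlifted}(t)-\mathcal{A}\SolROMlifted(t)$. Subtracting this from the \gls{FOM} $\dot{\Sol}=\mathcal{A}\Sol$ yields the inhomogeneous error equation
\begin{equation*}
	\dot{\varepsilon}(t)=\mathcal{A}\varepsilon(t)-\mathscr{R}(t,\dot{\SolROM}(t),\dot{\Path}(t)),\qquad \varepsilon(0)=\InitialCondition-\SolROMlifted(0).
\end{equation*}

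Next I would argue that $\varepsilon$ is a genuine classical solution of this abstract Cauchy problem on $[0,\widetilde{T})$: since $\Transformation_i(\Path_i(t))\Mode_i\in\Y=D(\mathcal{A})$ by $\Y$-invariance, $\SolROMlifted(t)\in D(\mathcal{A})$ for every $t$, and the twice-continuous-differentiability hypothesis together with the regularity of $(\SolROM,\Path)$ from \Cref{thm:wellPosedROM} ensures that $\SolROMlifted$ is $C^1$ into $\X$ and that $t\mapsto\mathcal{A}\SolROMlifted(t)$, hence the residual, is continuous. Consequently $\varepsilon$ coincides with the mild solution and admits the representation $\varepsilon(t)=S(t)\varepsilon(0)-\int_0^t S(t-s)\mathscr{R}(s,\dot{\SolROM}(s),\dot{\Path}(s))\,\mathrm{d}s$. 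Taking $\X$-norms, inserting $\operatorNorm{S(\tau)}\leq\widetilde{C}\mathrm{e}^{\omega\tau}$, and using $\mathrm{e}^{\omega(t-s)}\leq\mathrm{e}^{\omega t}$ for $0\leq s\leq t$ (valid because $\omega\geq 0$), the convolution term is dominated by $\widetilde{C}\mathrm{e}^{\omega t}\,t\,\|\mathscr{R}(\cdot,\dot{\SolROM},\dot{\Path})\|_{L^\infty(0,t;\X)}$ and the initial term by $\widetilde{C}\mathrm{e}^{\omega t}\|\varepsilon(0)\|_\X$. Finally, the definition of $J_{\mathrm{IV}}$ in \eqref{eq:ICminimization} gives $\|\varepsilon(0)\|_\X^2=J_{\mathrm{IV}}(\SolROM_{(0)},\Path_{(0)})$, and collecting both contributions produces \eqref{eq:posterioriErrorBound}.

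I expect the estimation itself to be routine; the main obstacle is the middle step, namely the rigorous justification that $\varepsilon$ is classical so that the variation-of-constants formula applies unchanged. This is precisely where $\Y$-invariance (so that $\SolROMlifted(t)\in D(\mathcal{A})$) and the twice-differentiability assumption (continuity of the residual) are needed, and it is the reason the hypotheses of \Cref{thm:wellPosedROM} are invoked. A minor point to watch is that the quadratic definition of $J_{\mathrm{IV}}$ yields $\|\varepsilon(0)\|_\X=\sqrt{J_{\mathrm{IV}}}$, so the first summand in \eqref{eq:posterioriErrorBound} should be read accordingly.
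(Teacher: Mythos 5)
Your proposal matches the paper's proof essentially step for step: the same error--residual relation $\dot{\varepsilon}(t)=\mathcal{A}\varepsilon(t)-\mathscr{R}(t,\dot{\SolROM}(t),\dot{\Path}(t))$, the same variation-of-constants representation, and the same growth bound $\operatorNorm{S(\tau)}\leq\widetilde{C}\mathrm{e}^{\omega \tau}\leq\widetilde{C}\mathrm{e}^{\omega t}$, with the constants independent of the modes and transformations for exactly the reason you state; the only (immaterial) divergence is in justifying Duhamel's formula, where the paper infers that $\mathscr{R}$ is continuously differentiable and invokes \cite[Cha.~4, Cor.~2.5]{Paz83}, while you argue directly that $\varepsilon$ is a classical solution and hence coincides with the mild one, which in fact needs only continuity of the residual. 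Your closing remark is also well taken: since $J_{\mathrm{IV}}$ is defined in \eqref{eq:ICminimization} as a squared norm, the proof actually yields $\|\varepsilon(0)\|_\X=\sqrt{J_{\mathrm{IV}}(\SolROM_{(0)},\Path_{(0)})}$ in the first summand, a discrepancy the paper's own proof (which simply estimates by $\|\varepsilon_0\|_\X$) leaves unaddressed.
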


\begin{proof}
	For the proof we first note that~\Cref{thm:wellPosedROM} implies the existence of $\widetilde{T}$ such that~\eqref{eq:ROMcoupled} possesses a unique solution for $t\in[0,\widetilde{T})$ and observe a standard error residual relation given by the (abstract) differential equation
	\begin{equation}
		\label{eq:errorResidualRelation}
		\begin{aligned}
		\dot{\varepsilon}(t) &= \dot{\Sol}(t) - \sum_{i=1}^{\dimROM}\dot{\Coefficient}_i(t)\Transformation_i(\Path_i(t))\Mode_i - \sum_{i=1}^{\dimROM}\Coefficient_i(t)[\Transformation_i'(\Path_i(t))\Mode_i]\dot{\Path}_i(t)\\
			&= \mathcal{A}\varepsilon(t) - \mathscr{R}(t,\dot{\SolROM}(t),\dot{\Path}(t))
		\end{aligned}
	\end{equation}
	together with the initial condition
	\begin{displaymath}
		\varepsilon(0) = \varepsilon_0 \vcentcolon= \InitialCondition-\sum_{i=1}^{\dimROM}\Coefficient_{(0),i}\Transformation_i(\Path_{(0),i})\Mode_i\in D(\mathcal{A}).
	\end{displaymath}
	Since $\{S(t)\}_{t\geq 0}$ is a strongly continuous semigroup, there exist (cf.~\cite[Cha.~1, Thm~2.2]{Paz83}) constants~$\omega\geq 0$ and~$\widetilde{C}\geq 1$ with $\operatorNorm{S(t)} \leq \widetilde{C}\mathrm{e}^{\omega t}$ for all $t\geq 0$. 
	Since $(\SolROM,\Path)$ is a solution of \eqref{eq:ROMcoupled}, we infer that the residual $\mathscr{R}$ is continuously differentiable and thus \cite[Cha.~4, Cor.~2.5]{Paz83} ensures that the (classical) solution of \eqref{eq:errorResidualRelation} is given by 
	\begin{displaymath}
		\varepsilon(t) = S(t)\varepsilon_0 - \int_0^t S(t-\tau)\mathscr{R}(\tau,\dot{\SolROM}(\tau),\dot{\Path}(\tau))\,\mathrm{d}\tau.
	\end{displaymath}
	We conclude the proof by estimating the integral with the supremum norm and the fact that $\operatorNorm{S(\tau)} \leq \widetilde{C}\mathrm{e}^{\omega \tau} \leq \widetilde{C}\mathrm{e}^{\omega t}$ for all $0\leq \tau \leq t$. 
\end{proof}

\begin{remark}
	In many applications, we expect that $\mathcal{A}$ is a semigroup of contractions or even an analytic semigroup with negative spectral abscissa. In either case, we can remove the exponential factor in \eqref{eq:posterioriErrorBound} and hence obtain a linear growth factor in the time variable. Moreover, we can modify the result to include the $L^2$-norm of the residual instead of the $L^\infty$-norm by using Young's inequality in last step of the proof. Further possible modifications are the incorporation of the time-discretization error \cite{HaaO08,JanNP13}, using a weighted norm \cite{Gre05}, or a space-time formulation \cite{UrbP14}. Moreover, the error bound might be extended to nonlinear systems by using similar techniques as in \cite{TadPQ15}. Modifications of the error bound are considered future work.
\end{remark}

Assume again that $\mathcal{A}$ is the generator of a strongly continuous semigroup $\{S(t)\}_{t\geq 0}$. 
For the moment, let us further assume that for some $\Path_i(t)$ we have $\Transformation_i(\Path_i(t)) = S(t)$. 
Using $\tfrac{\mathrm{d}}{\mathrm{d}t} S(t)\Sol = \mathcal{A}S(t)\Sol = S(t)\mathcal{A}\Sol$ for all $\Sol\in D(\mathcal{A})$ (cf.~\cite[Cha.\,1, Thm.\,2.4]{Paz83}), we observe $\FROMstate(\Path,\SolROM) = \PODpathMatrix(\Path)D(\SolROM)\dot{\Path}$ and thus \eqref{eq:ROMtransformed} is given by
\begin{equation}
	\label{eq:ROMtransformationSemigroup}
		\PODmassMatrix(\Path(t))\dot{\SolROM}(t) = 0.
\end{equation}
Clearly, $\SolROM(t) = \SolROM_0$ for all $t\geq 0$ is a solution of \eqref{eq:ROMtransformationSemigroup} that is particularly easy to compute. 

\begin{example}
	\label{ex:advectionEquationROM}
	Let us reconsider the advection equation (cf.~\Cref{ex:advectionEquationPOD}). For the \gls{ROM} \eqref{eq:ROMcoupled} we use the shift operator $\Transformation(\Path)\Sol = \Sol(\cdot - \Path)$ with the usual embedding into $\X = \LTwo{0,1}$, i.e., $\Transformation_i(\Path_i) \vcentcolon =\Transformation(\Path_i)$. It is well-known that $\Transformation$ is a semi-group and $-\partial_\xi\colon\Y\to\X$ is its generator \cite[Sec.~II.2.10]{EngN00}. For the right-hand side in \eqref{eq:ROMcoupled:PhaseConstraint} we obtain
	\begin{align*}
		\FROMpath(\Path,\SolROM) &= \begin{bmatrix}
			\ipX{ \partial_\xi \Transformation(\Path_1)\Mode_1, \partial_\xi \left(\sum_{j=1}^{\dimROM} \Coefficient_j\Transformation(\Path_j)\Mode_j\right)} \\
			\vdots\\
			\ipX{ \partial_\xi \Transformation(\Path_{\dimROM})\Mode_{\dimROM}, \partial_\xi \left(\sum_{j=1}^{\dimROM} \Coefficient_j\Transformation(\Path_j)\Mode_j\right)} 
		\end{bmatrix}\\
		&= \begin{bmatrix}
			\ipX{ \partial_\xi \Transformation(\Path_1)\Mode_1,\partial_\xi \Transformation(\Path_1)\Mode_1} & \cdots & \ipX{ \partial_\xi \Transformation(\Path_1)\Mode_1,\partial_\xi \Transformation(\Path_{\dimROM})\Mode_{\dimROM}}\\
			\vdots & & \vdots\\
			\ipX{ \partial_\xi \Transformation(\Path_{\dimROM})\Mode_{\dimROM},\partial_\xi \Transformation(\Path_1)\Mode_1} & \cdots & \ipX{ \partial_\xi \Transformation(\Path_{\dimROM})\Mode_{\dimROM},\partial_\xi \Transformation(\Path_{\dimROM})\Mode_{\dimROM}}
		\end{bmatrix}\SolROM\\
		&= \PODmassMatrixPath(\Path) D(\SolROM)e,
	\end{align*}
	with $e \vcentcolon= \smash{\begin{bmatrix}
			1 & \cdots & 1
		\end{bmatrix}^T}\in\mathbb{R}^{\dimROM}$. Similarly, $\FROMstate(\Path,\SolROM) = \PODpathMatrix(\Path)D(\SolROM)e$. Substituting \eqref{eq:ROMcoupled:ROM} into \eqref{eq:ROMcoupled:PhaseConstraint} thus implies
	\begin{displaymath}
		D(\SolROM)^T\left(\PODpathMatrix(\Path)^T \PODmassMatrixState^{-1}(\Path)\PODpathMatrix(\Path) - \PODmassMatrixPath(\Path)\right) D(\SolROM)(\dot{\Path}-e) = 0.
	\end{displaymath}
	Note that $\PODpathMatrix(\Path)^T \PODmassMatrixState^{-1}(\Path)\PODpathMatrix(\Path) - \PODmassMatrixPath(\Path)$ is the Schur complement of $\PODmassMatrixState(\Path)$ in $M(\Path)$ defined in \eqref{eq:ROMcoupled:massMatrix}. In particular, the assumptions of \Cref{thm:wellPosedROM} imply $\dot{\Path} =e$ and hence the initial condition $\Path(0) = 0$ implies that $\Transformation(\Path_i)$ equals the semigroup associated with the advection equation \eqref{eq:advectionEquation}.
\end{example}

\section{Connection with Symmetry Reduction}
\label{sec:symmetryReduction}
\allowdisplaybreaks

In this section we investigate how the methodology outlined in \cref{sec:onlinePhase} compares to the symmetry reduction framework as described in \cite{BeyT04,RowM00,RowKML03,OhlR13}. To this end we focus on the special case that all modes are transformed with the same transformation $\Hom$ and the same path $\Path(t)\in\PathSpace=\RealNumbers^q$. 
More precisely, we assume $\hat{\dimROM} = 1$ in \eqref{eq:multframeAnsatzGroup}, i.e., we consider the approximation.
\begin{equation}
	\label{eq:referenceFrameApproach}
	\Sol(t) \approx \sum_{i=1}^{\dimROM} \Coefficient_i(t)\Transformation(\Path(t))\Mode_i.
\end{equation}
In this case, the matrices in the \gls{ROM} \eqref{eq:ROMcoupled}, which we recall here
\begin{subequations}
	\label{eq:ROMcoupledOneTrafo}
	\begin{align}
		\label{eq:ROMcoupled:ROMOneTrafo}\PODmassMatrixState(\Path(t))\dot{\SolROM}(t) + \PODpathMatrix(\Path(t))D(\SolROM(t))\dot{\Path}(t) &= \FROMstate(\Path(t),\SolROM(t)),\\
		\label{eq:ROMcoupled:PhaseConstraintOneTrafo}D(\SolROM(t))^T\PODpathMatrix(\Path(t))^T\dot{\SolROM}(t) + D(\SolROM(t))^T\PODmassMatrixPath(\Path(t))D(\SolROM(t))\dot{\Path}(t) &= D(\SolROM(t))^T\FROMpath(\Path(t),\SolROM(t)).
	\end{align}
\end{subequations}
simplify as follows: The matrices $\PODmassMatrixState$ and $\FROMstate$ are defined as in \cref{sec:onlinePhase} but with $\Transformation(\Path)$ instead of $\Transformation_i(\Path_i)$, $D(\SolROM) \vcentcolon= \Coefficient \otimes I_{q}\in\mathbb{R}^{\dimROM\dimPathSpace\times\dimPathSpace}$, where $\otimes$ denotes the Kronecker product,
\begin{align*}
	\PODpathMatrix(\Path) &\vcentcolon= \begin{bmatrix}\ipX{ \Hom(\Path)\Mode_i,[\Hom'(\Path)\Mode_j]e_1} & \cdots & \ipX{ \Hom(\Path)\Mode_i,[\Hom'(\Path)\Mode_j]e_{\dimPathSpace}} \end{bmatrix}_{i,j=1}^{\dimROM}\in\mathbb{R}^{\dimROM\times\dimROM\dimPathSpace},\\
	\PODmassMatrixPath(\Path) &\vcentcolon= 
	\begin{bmatrix}
		\ipX{ [\Hom'(\Path)\Mode_i]e_1,[\Hom'(\Path)\Mode_j]e_1} & \cdots & \ipX{ [\Hom'(\Path)\Mode_i]e_1,[\Hom'(\Path)\Mode_j]e_{\dimPathSpace}} \\
		\vdots & & \vdots\\
		\ipX{ [\Hom'(\Path)\Mode_i]e_{\dimPathSpace},[\Hom'(\Path)\Mode_j]e_1} & \cdots & \ipX{ [\Hom'(\Path)\Mode_i]e_{\dimPathSpace},[\Hom'(\Path)\Mode_j]e_{\dimPathSpace}}
	\end{bmatrix}
	_{i,j=1}^{\dimROM}\hspace*{-2em} \in \mathbb{R}^{\dimROM\dimPathSpace\times\dimROM\dimPathSpace},\\
	\FROMpath(\Path,\SolROM) &\vcentcolon= 
	\begin{bmatrix} 
		\ipX{ [\Hom'(\Path)\Mode_i]e_1, \F\left(\sum_{j=1}^{\dimROM}\Coefficient_j\Hom(\Path)\Mode_j\right)}\\
		\vdots\\
		\ipX{ [\Hom'(\Path)\Mode_i]e_{\dimPathSpace}, \F\left(\sum_{j=1}^{\dimROM}\Coefficient_j\Hom(\Path)\Mode_j\right)}
	\end{bmatrix}
	_{i=1}^{\dimROM}\hspace*{-1.5em} \in \mathbb{R}^{\dimROM\dimPathSpace}.
\end{align*}
Recall that \eqref{eq:ROMcoupled:PhaseConstraintOneTrafo} is the phase condition that corresponds to minimizing the residual with respect to $\dot{\Path}$ and is therefor referred to as $\PhaseCondition_{\mathrm{Res}}$.

Another common assumption in the symmetry reduction framework is that $\Transformation$ is a group action and the right-hand side $\F$ is equivariant with respect to $\Transformation$, cf.~\Cref{ass:equivarianceGroupAction}.
Furthermore, we assume in the following that $\Hom(\eta)$ is isometric for all $\eta\in\PathSpace$.

\begin{assumption}
	\label{ass:equivarianceGroupAction}
	The right-hand side $\F$ is \emph{equivariant} with respect to $\Transformation$, i.e.,
	\begin{equation}
		\label{eq:equivariance}
		\F(\Transformation(\Path)\Mode) = \Transformation(\Path)\F(\Mode)\qquad\text{for all $\Mode\in\Y$ and $p\in\PathSpace$}.
	\end{equation}
	Moreover the mapping $\Transformation\colon \PathSpace\times \X \to \X$ is a \emph{group action}, i.e.
	\begin{equation}
		\label{eq:groupAction}
		\Transformation(0) = \mathrm{Id}_{\X}\qquad\text{and}\qquad
		\Transformation(\tilde{\Path})\Transformation(\Path)\Mode = \Transformation(\tilde{\Path}+\Path)\Mode\qquad\text{for all $\Mode\in\X$ and $p,\tilde p\in\PathSpace$}.
	\end{equation}
\end{assumption}

\begin{remark}
	\label{rem:equivarianceSemigroup}
	The properties of $\F$ and $\Hom$ stated in \Cref{ass:equivarianceGroupAction} can also be motivated by considering them from the perspective of the semigroup theory.
	In \cref{sec:onlinePhase} we demonstrated that if the right-hand side $\F$ is linear and the corresponding linear operator is the generator of a strongly continuous semigroup, choosing the transformation $\Transformation$ to be the action of this semigroup leads to a particularly simple \gls{ROM}.
	However, in practice we do not expect to have direct access to the semigroup $\{S(t)\}_{t\geq 0}$ or, more generally speaking, to the flow of the differential equation.
	Still, this discussion provides a good starting point for the construction of the transformation operator $\Transformation$, which should reflect the characteristic features of the expected solution behavior as encoded in $\{S(t)\}_{t\geq 0}$. 
	In fact, by \Cref{ass:equivarianceGroupAction} we inherit two important properties of this semigroup.
\end{remark}

\begin{remark}
	\label{rem:pathIndependentROM}
	It is important to note that due to the isometry property of $\Hom(\Path(t))$ and due to the equivariance assumption \eqref{eq:equivariance}, the matrix $\PODmassMatrixState$ and the right-hand side $\FROMstate$ do not depend on the path $\Path$ anymore.
	If additionally, $\Transformation(-\Path)[\Transformation'(\Path)\phi]$ and $\ipX{ [\Transformation'(\Path)\phi]v,[\Transformation'(\Path)\psi]w}$ do not depend on $\Path$ for all $\phi,\psi\in\Y$ and $v,w\in\PathSpace=\RealNumbers^q$, then also $\PODpathMatrix$, $\FROMpath$, and $\PODmassMatrixPath$ are independent from the path. 
	For instance, the shift operator as discussed in \Cref{ex:advectionEquationROM} satisfies these properties.
	In this case, the coefficient matrices $\PODmassMatrixState$, $\PODpathMatrix$, and $\PODmassMatrixPath$ can be precomputed in the offline phase.
	If additionally $\F$ is linear, or more generally, a polynomial mapping, the corresponding reduced operator can also be precomputed in the offline phase and, thus, the \gls{ROM} can be evaluated efficiently without requiring computations that scale with the dimension of the \gls{FOM}.
\end{remark}

Using a single transformation $\Hom$ establishes a \emph{reference frame}, c.f.\ \cref{sec:literatureReview}. 
In more detail assume that we are given a smooth path $\Path$ and that we can split the dynamics as
\begin{equation}
	\label{eq:dynamicSplitting}
	\Sol(t) = \Transformation(\Path(t))\FrozenSolution(t),
\end{equation}
where we refer to $\FrozenSolution$ as the \emph{frozen solution}. 
Especially, if \eqref{eq:groupAction} from \Cref{ass:equivarianceGroupAction} is satisfied, the frozen solution is given by $\FrozenSolution(t) = \Transformation(-\Path(t))\Sol(t)$.
Substituting \eqref{eq:dynamicSplitting} into the evolution equation \eqref{eq:FOMPDE} yields
\begin{align*}
	\Transformation(\Path(t))\dot{\FrozenSolution}(t) + [\Transformation'(\Path)\FrozenSolution(t)]\dot{\Path}(t) &= \F(\Transformation(\Path(t))\FrozenSolution(t)).
\end{align*}
If \Cref{ass:equivarianceGroupAction} is satisfied, we can employ the identities $\F(\Transformation(\Path(t))\FrozenSolution(t))=\Transformation(\Path(t))\F(\FrozenSolution(t))$ and $\Transformation(-\Path(t))\Transformation(\Path(t))=\mathrm{Id}_{\X}$ to arrive at the reference frame equation
\begin{equation}
	\label{eq:referencePDE}
	\dot{\FrozenSolution}(t) = \F(\FrozenSolution(t)) - \Transformation(-\Path(t))[\Transformation'(\Path)\FrozenSolution(t)]\dot{\Path}(t).
\end{equation}
Especially, given a continuously differentiable path $\Path$ the assumptions imply that $\FrozenSolution$ is a solution of \eqref{eq:referencePDE} if and only if $\Sol$ is a solution of \eqref{eq:FOMPDE}, see \cite[Thm.\,2.6]{BeyT04} for further details. 
Based on the reference frame equation \eqref{eq:referencePDE}, we can construct a \gls{ROM} via Galerkin projection onto the span of a suitable orthonormal basis $(\Mode_1,\ldots,\Mode_{\dimROM})$ and obtain 
\begin{align}
	\label{eq:referenceFrameROM}
	\dot\FrozenSolROM_i(t) = \ipX{ \Mode_i,\F\left(\sum_{j=1}^{\dimROM} \FrozenSolROM_j(t)\Mode_j\right)} - \sum_{j=1}^{\dimROM} \ipX{ \Mode_i,\Transformation(-\Path(t))[\Transformation'(\Path)\Mode_j]\dot{\Path}(t)} \FrozenSolROM_j(t)
\end{align}
for $i=1,\ldots,\dimROM$. Note that the approximation is given by $\FrozenSolution(t) \approx \sum_{i=1}^{\dimROM} \FrozenSolROM_i(t)\Mode_i$ with associated residual
\begin{equation}
	\label{eq:residual}
	\mathscr{R}(t) = \sum_{i=1}^{\dimROM} \dot{\FrozenSolROM}(t)\Mode_i - \F\left(\sum_{i=1}^{\dimROM} \FrozenSolROM_i(t)\Mode_i\right) + \sum_{i=1}^{\dimROM} \FrozenSolROM_i(t)\Transformation(-\Path(t))[\Transformation'(\Path(t))\Mode_i]\dot{\Path}(t).
\end{equation}
For convenience we introduce the reduced frozen state $\FrozenSolROM \vcentcolon= \smash{[\FrozenSolROM_1\;\, \cdots\;\, \FrozenSolROM_{\dimROM}]^T}$. 
Using the notation from \eqref{eq:ROMcoupledOneTrafo}, we can write the \gls{ROM} \eqref{eq:referenceFrameROM} of the reference frame equation as
\begin{equation}
	\label{eq:referenceFrameROMMatrix}
	\dot\FrozenSolROM(t) = \FROMstate(\Path(t),\FrozenSolROM(t))-\PODpathMatrix(\Path(t))D(\FrozenSolROM(t))\dot{\Path}(t).
\end{equation}
Thus, we immediately arrive at the following relation between the symmetry reduction \gls{ROM} \eqref{eq:referenceFrameROM} and the continuously optimal \gls{ROM} \eqref{eq:ROMcoupledOneTrafo}.

\begin{lemma}\label{lem:equivalenceSingleFrame}
	Let $(\Mode_1,\ldots,\Mode_{\dimROM})$ be an orthonormal basis of an $\dimROM$-dimensional subspace of $\Y$ and consider the single-frame approximation \eqref{eq:referenceFrameApproach}. 
	If the transformation operator $\Transformation$ is an isometry and satisfies \Cref{ass:TransformationDifferentiability,ass:equivarianceGroupAction}, then \eqref{eq:ROMcoupled:ROMOneTrafo} and \eqref{eq:referenceFrameROM} are equivalent in the sense that for every continuously differentiable path $\Path$, any solution of \eqref{eq:ROMcoupled:ROMOneTrafo} is a solution of \eqref{eq:referenceFrameROM} and vice versa.
\end{lemma}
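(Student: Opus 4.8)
The plan is to show that, under the stated hypotheses, the two state equations are literally the same ordinary differential equation once the reduced frozen state $\FrozenSolROM$ is identified with the reduced coefficient vector $\SolROM = (\Coefficient_1,\ldots,\Coefficient_{\dimROM})$; this identification is the natural one, since inserting the splitting $\Sol(t) = \Transformation(\Path(t))\FrozenSolution(t)$ together with $\FrozenSolution\approx\sum_i\FrozenSolROM_i\Mode_i$ reproduces exactly the single-frame ansatz \eqref{eq:referenceFrameApproach}. With this in mind I would resolve \eqref{eq:ROMcoupled:ROMOneTrafo} into the form $\dot{\SolROM} = \PODmassMatrixState(\Path)^{-1}\bigl(\FROMstate(\Path,\SolROM) - \PODpathMatrix(\Path)D(\SolROM)\dot{\Path}\bigr)$ and compare it term by term with \eqref{eq:referenceFrameROMMatrix}, that is, with $\dot{\FrozenSolROM} = \FROMstate(\Path,\FrozenSolROM) - \PODpathMatrix(\Path)D(\FrozenSolROM)\dot{\Path}$. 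The first simplification is that $\PODmassMatrixState(\Path) = I_{\dimROM}$: since $\Transformation(\Path)$ is an isometry of the real Hilbert space $\X$, it preserves inner products (by polarization), so $\ipX{\Transformation(\Path)\Mode_i,\Transformation(\Path)\Mode_j} = \ipX{\Mode_i,\Mode_j} = \delta_{ij}$ by orthonormality of the $\Mode_i$.

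Next I would treat the right-hand side. Using equivariance \eqref{eq:equivariance} to pull the transformation out of $\F$ and then the inner-product-preserving property of $\Transformation(\Path)$ once more, each component collapses to $\FROMstate(\Path,\SolROM)_i = \ipX{\Transformation(\Path)\Mode_i,\Transformation(\Path)\F(\sum_j\Coefficient_j\Mode_j)} = \ipX{\Mode_i,\F(\sum_j\Coefficient_j\Mode_j)}$, which is precisely the first summand of the componentwise form \eqref{eq:referenceFrameROM} and, in particular, is independent of $\Path$ (cf.\ \Cref{rem:pathIndependentROM}).

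For the path-coupling term I would unpack the Kronecker structure $D(\SolROM) = \SolROM\otimes I_{\dimPathSpace}$, so that $D(\SolROM)\dot{\Path}$ stacks the blocks $\Coefficient_j\dot{\Path}$; summing over the basis $(e_1,\ldots,e_{\dimPathSpace})$ of $\PathSpace$ and exploiting linearity of the bounded map $[\Transformation'(\Path)\Mode_j]\in\mathcal{L}(\PathSpace,\X)$ yields $(\PODpathMatrix(\Path)D(\SolROM)\dot{\Path})_i = \sum_j\Coefficient_j\ipX{\Transformation(\Path)\Mode_i,[\Transformation'(\Path)\Mode_j]\dot{\Path}}$. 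The final step is to move $\Transformation(\Path)$ to the other slot of the inner product: because $\Transformation(\Path)$ is an isometry that is additionally surjective — its inverse being $\Transformation(-\Path)$ by the group-action axiom $\Transformation(\Path)\Transformation(-\Path) = \Transformation(0) = \mathrm{Id}_{\X}$ in \eqref{eq:groupAction} — it is unitary, whence $\Transformation(\Path)^\star = \Transformation(\Path)^{-1} = \Transformation(-\Path)$. Therefore $\ipX{\Transformation(\Path)\Mode_i,[\Transformation'(\Path)\Mode_j]\dot{\Path}} = \ipX{\Mode_i,\Transformation(-\Path)[\Transformation'(\Path)\Mode_j]\dot{\Path}}$, which matches the second summand of \eqref{eq:referenceFrameROM} verbatim.

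Combining these three identities, the resolved form of \eqref{eq:ROMcoupled:ROMOneTrafo} coincides with \eqref{eq:referenceFrameROMMatrix} under the identification $\SolROM = \FrozenSolROM$; hence, for any fixed continuously differentiable $\Path$, the two equations are the same ODE in the state, so $\SolROM$ solves \eqref{eq:ROMcoupled:ROMOneTrafo} if and only if $\FrozenSolROM = \SolROM$ solves \eqref{eq:referenceFrameROM}, which is the asserted equivalence. I expect the only genuinely delicate points to be the passage from \emph{isometric} to \emph{unitary} — which requires invoking the surjectivity supplied by \eqref{eq:groupAction} to make the adjoint identity $\Transformation(\Path)^\star = \Transformation(-\Path)$ legitimate — and the bookkeeping that rewrites the block product $\PODpathMatrix(\Path)D(\SolROM)\dot{\Path}$ as the single sum appearing in \eqref{eq:referenceFrameROM}; the inner-product cancellations and the remaining componentwise matching are otherwise routine.
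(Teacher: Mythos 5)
Your proposal is correct and follows essentially the same route as the paper, which leaves the proof implicit: the identities you establish (isometry plus orthonormality giving $\PODmassMatrixState(\Path)=I_{\dimROM}$, equivariance collapsing $\FROMstate$ to $\ipX{\Mode_i,\F(\sum_j\Coefficient_j\Mode_j)}$, and $\Transformation(\Path)^\star=\Transformation(-\Path)$ for the path-coupling term) are exactly the computations the paper embeds in its rewriting of \eqref{eq:referenceFrameROM} as the matrix equation \eqref{eq:referenceFrameROMMatrix}, after which the lemma is read off by comparison with \eqref{eq:ROMcoupled:ROMOneTrafo}. Your explicit attention to the isometric-versus-unitary point, using the group-action axiom \eqref{eq:groupAction} to supply surjectivity, is a legitimate and welcome sharpening of a step the paper takes for granted.
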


\Cref{lem:equivalenceSingleFrame} establishes the equivalence between the \gls{ROM} obtained by symmetry reduction and the one obtained by our framework in the case that the same path is chosen for both \glspl{ROM}.
In our framework, the path is fixed by adding the phase condition \eqref{eq:ROMcoupled:PhaseConstraintOneTrafo}, which ensures to minimize the residual.
In the symmetry reduction framework, different phase conditions have been proposed in the literature.
For instance, in \cite{BeyT04} the authors propose to derive a phase condition for the reference frame equation \eqref{eq:referencePDE} by minimizing the temporal change of the frozen solution, \ie, by minimizing $\frac{1}{2}\lVert \dot\FrozenSolution(t)\rVert_\X^2$ over $\dot{\Path}(t)$.
This idea of choosing the path is usually referred to as \emph{freezing}.
The associated phase condition $\PhaseCondition_{\mathrm{freezeFOM}}(\Path,\dot{\Path},\FrozenSolution)$\footnote{The authors of \cite{BeyT04} denote the phase condition $\PhaseCondition_{\mathrm{freezeFOM}}$ as $\PhaseCondition_{\min}$. Since all phase conditions discussed in our exposition are based on a minimization problem, we use a different name here.} is given by the first-order necessary optimality condition 
\begin{equation}
	\label{eq:phaseConditionPDE}
	[\Hom'(\Path(t))\FrozenSolution(t)]^*[\Hom'(\Path(t))\FrozenSolution(t)]\dot{\Path}(t) = [\Hom'(\Path(t))\FrozenSolution(t)]^*\F(\Hom(\Path(t))\FrozenSolution(t)),
\end{equation}
where we use that $\Hom(\Path(t))$ is isometric as well as \Cref{ass:equivarianceGroupAction}. 
In \cite{BeyT04}, the authors propose to discretize the phase condition in space by replacing the occurring operators and the frozen solution $\FrozenSolution$ by their finite difference approximations.
In the context of model reduction, this strategy corresponds to enforcing
\begin{equation*}
	\PhaseCondition_{\mathrm{freeze}}(\Path,\dot{\Path},\SolROM) \vcentcolon= \PhaseCondition_{\mathrm{freezeFOM}}\left(\Path,\dot{\Path},\sum_{i=1}^{\dimROM}\FrozenSolROM_i\Mode_i\right) = 0
\end{equation*}
or equivalently,
\begin{equation}
	\label{eq:phaseConditionROM1}
	D(\FrozenSolROM(t))^T\PODmassMatrixPath(\Path(t))D(\FrozenSolROM(t))\dot{\Path}(t) = D(\FrozenSolROM(t))^T\FROMpath(\Path(t),\FrozenSolROM(t)).
\end{equation}
It is important to note that in general, \eqref{eq:phaseConditionROM1} is not equivalent to the first-order necessary optimality condition for minimizing the temporal change of the reduced state, i.e., by minimizing $\frac{1}{2}\lVert \dot\FrozenSolROM(t)\rVert_2$ over $\dot{\Path}(t)$. This  optimality condition, which we call~$\rom{\PhaseCondition}_{\mathrm{freeze}}$, is given by
\begin{equation}
	\label{eq:phaseConditionROM2}
	D(\FrozenSolROM(t))^T\PODpathMatrix(\Path(t))^T\PODpathMatrix(\Path(t))D(\FrozenSolROM(t))\dot{\Path}(t) = D(\FrozenSolROM(t))^T\PODpathMatrix(\Path(t))^T\FROMstate(\Path(t),\FrozenSolROM(t)).
\end{equation}
The relation between the three different phase conditions $\PhaseCondition_{\mathrm{Res}}$, $\PhaseCondition_{\mathrm{freeze}}$, and $\rom{\PhaseCondition}_{\mathrm{freeze}}$, defined in \eqref{eq:ROMcoupled:PhaseConstraintOneTrafo}, \eqref{eq:phaseConditionROM1}, and \eqref{eq:phaseConditionROM2}, respectively, is provided in the next result.

\begin{theorem}
	\label{thm:phaseCondition}
	Let the assumptions of \Cref{lem:equivalenceSingleFrame} be satisfied. Then, for each $t>0$, the  phase condition $\PhaseCondition_{\mathrm{freeze}}$ given in \eqref{eq:phaseConditionROM1} is the necessary first-order optimality condition for the optimization problem
	\begin{displaymath}
		\min_{\dot{\Path}(t)} \frac{1}{2}\lVert \mathscr{R}(t)\rVert_\X^2+\frac{1}{2}\lVert\dot{\FrozenSolROM}(t)\rVert_2^2,
	\end{displaymath}
	where $\mathscr{R}$ denotes the residual defined in \eqref{eq:residual} and $\dot{\FrozenSolROM}$ is given by \eqref{eq:referenceFrameROMMatrix}.
\end{theorem}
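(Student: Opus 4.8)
The plan is to treat the minimization at a fixed time $t>0$ as an unconstrained convex problem in the single variable $\dot{\Path}(t)=\vcentcolon\eta\in\PathSpace=\RealNumbers^q$, keeping in mind that $\dot{\FrozenSolROM}(t)=\vcentcolon x$ is not free but is pinned to $\eta$ through the reduced-state equation \eqref{eq:referenceFrameROMMatrix}. First I would record that, because $\Transformation$ is isometric and the modes are orthonormal, $\PODmassMatrixState=I_{\dimROM}$, so \eqref{eq:referenceFrameROMMatrix} reads $x=\FROMstate(\Path,\FrozenSolROM)-\PODpathMatrix(\Path)D(\FrozenSolROM)\eta$; in particular $x$ is affine in $\eta$, hence both summands of the cost are convex in $\eta$ and the stationarity condition I derive is not only necessary but also sufficient.

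The conceptual key is that \eqref{eq:referenceFrameROMMatrix} is precisely the stationarity of $\tfrac12\|\mathscr{R}(t)\|_\X^2$ in $x$: since $\partial_{x_i}\mathscr{R}=\Mode_i$ and the modes are orthonormal, using equivariance and isometry to identify $\ipX{\Mode_i,\F(\sum_j\FrozenSolROM_j\Mode_j)}$ with $(\FROMstate)_i$, the equation $x=\FROMstate-\PODpathMatrix D(\FrozenSolROM)\eta$ is equivalent to $\ipX{\Mode_i,\mathscr{R}(t)}=0$ for all $i$, i.e.\ $\partial_x\big(\tfrac12\|\mathscr{R}\|_\X^2\big)=0$. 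This lets me invoke an envelope argument: writing $g(\eta)\vcentcolon=\tfrac12\|\mathscr{R}\|_\X^2+\tfrac12\|x(\eta)\|_2^2$, the chain-rule contribution $\partial_x(\tfrac12\|\mathscr{R}\|_\X^2)\cdot\tfrac{\mathrm dx}{\mathrm d\eta}$ drops out, so only the explicit $\eta$-dependence of the residual survives. Thus $\partial_{\eta_k}g=\ipX{\mathscr{R},\psi_k}+x^T\tfrac{\partial x}{\partial\eta_k}$, where $\psi_k\vcentcolon=\sum_i\FrozenSolROM_i\Transformation(-\Path)[\Transformation'(\Path)\Mode_i]e_k=\partial_{\eta_k}\mathscr{R}$, and from the affine constraint $\tfrac{\partial x}{\partial\eta_k}=-\PODpathMatrix D(\FrozenSolROM)e_k$, so the second term equals $-\big(D(\FrozenSolROM)^T\PODpathMatrix^Tx\big)_k$.

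The bulk of the work is expanding $[\ipX{\mathscr{R},\psi_k}]_k$ into the matrix language of \eqref{eq:phaseConditionROM1}. I would substitute the three pieces of $\mathscr{R}$ and evaluate the resulting inner products using orthonormality, the isometry identity $\Transformation(-\Path)^*=\Transformation(\Path)$, and the equivariance $\F(\Transformation(\Path)w)=\Transformation(\Path)\F(w)$. These identifications give $\ipX{\Mode_i,\psi_k}=(\PODpathMatrix D(\FrozenSolROM))_{i,k}$, $\ipX{\psi_k,\psi_m}=(D(\FrozenSolROM)^T\PODmassMatrixPath D(\FrozenSolROM))_{k,m}$, and $\ipX{\F(\sum_j\FrozenSolROM_j\Mode_j),\psi_m}=(D(\FrozenSolROM)^T\FROMpath)_m$, so that $[\ipX{\mathscr{R},\psi_k}]_k=D(\FrozenSolROM)^T\PODpathMatrix^Tx-D(\FrozenSolROM)^T\FROMpath+D(\FrozenSolROM)^T\PODmassMatrixPath D(\FrozenSolROM)\eta$. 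Adding the regularization contribution $-D(\FrozenSolROM)^T\PODpathMatrix^Tx$ computed above, the two $D(\FrozenSolROM)^T\PODpathMatrix^Tx$ terms cancel exactly, and $\partial_\eta g=0$ collapses to $D(\FrozenSolROM)^T\PODmassMatrixPath D(\FrozenSolROM)\dot{\Path}=D(\FrozenSolROM)^T\FROMpath$, which is \eqref{eq:phaseConditionROM1}.

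I expect the main obstacle to be the careful bookkeeping in the third step — in particular tracking the Kronecker structure $D(\FrozenSolROM)=\FrozenSolROM\otimes I_q$ through the block index pairs $(i,k)$ and verifying that isometry and equivariance convert each transformed inner product into the correct entry of $\PODpathMatrix$, $\PODmassMatrixPath$, or $\FROMpath$. The conceptually delicate point, rather than merely computational, is recognizing that the regularizer $\tfrac12\|\dot{\FrozenSolROM}\|_2^2$ is exactly what supplies the $-D(\FrozenSolROM)^T\PODpathMatrix^T\dot{\FrozenSolROM}$ term needed to cancel the coupling term present in $\PhaseCondition_{\mathrm{Res}}$ (cf.\ \eqref{eq:ROMcoupled:PhaseConstraintOneTrafo}); without this regularizer one would recover $\PhaseCondition_{\mathrm{Res}}$, and this cancellation is the whole reason the plain freezing condition $\PhaseCondition_{\mathrm{freeze}}$ emerges.
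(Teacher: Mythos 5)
Your proposal is correct, and every identification you make checks out against the paper's definitions: the Galerkin equation \eqref{eq:referenceFrameROMMatrix} is indeed equivalent to $\ipX{\Mode_i,\mathscr{R}(t)}=0$ (orthonormality plus isometry/equivariance), the entries $\ipX{\Mode_i,\psi_k}=(\PODpathMatrix D(\FrozenSolROM))_{i,k}$, $\ipX{\psi_k,\psi_m}$ and $\ipX{\F(\cdot),\psi_k}$ land in $\PODmassMatrixPath$ and $\FROMpath$ exactly as you claim via $\Transformation(-\Path)^\star=\Transformation(\Path)$, and the cancellation of the two $D(\FrozenSolROM)^T\PODpathMatrix^T\dot{\FrozenSolROM}$ terms yields \eqref{eq:phaseConditionROM1}. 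However, the mechanics differ from the paper's proof. The paper never differentiates through the constraint: it first rewrites the objective, using orthonormality to lift $\tfrac12\|\dot{\FrozenSolROM}\|_2^2$ to $\tfrac12\|\sum_i\dot{\FrozenSolROM}_i\Mode_i\|_\X^2$ and using Galerkin orthogonality to insert the (vanishing) cross term $-\ipX{\mathscr{R},\sum_i\dot{\FrozenSolROM}_i\Mode_i}$, so that the whole cost collapses to the single square
\begin{displaymath}
	J_{\mathrm{freeze}}(\dot{\Path}(t)) = \frac{1}{2}\left\| \sum_{i=1}^{\dimROM} \FrozenSolROM_i(t)[\Transformation'(\Path(t))\Mode_i]\dot{\Path}(t) - \F\left(\sum_{j=1}^{\dimROM} \FrozenSolROM_j(t)\Transformation(\Path(t))\Mode_j\right)\right\|_\X^2,
\end{displaymath}
in which $\dot{\FrozenSolROM}$ has disappeared entirely; differentiating this manifestly $\dot{\Path}$-only expression gives \eqref{eq:phaseConditionROM1} immediately, with no envelope step and no term-by-term matrix bookkeeping. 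Your route — a reduced-gradient/envelope argument exploiting that Galerkin orthogonality is precisely stationarity of $\tfrac12\|\mathscr{R}\|_\X^2$ in $\dot{\FrozenSolROM}$, followed by explicit expansion — is longer but buys something the paper's completion-of-the-square hides: it shows directly that dropping the regularizer recovers $\PhaseCondition_{\mathrm{Res}}$, i.e.\ that the regularizer contributes exactly the $-D(\FrozenSolROM)^T\PODpathMatrix^T\dot{\FrozenSolROM}$ term canceling the coupling in \eqref{eq:ROMcoupled:PhaseConstraintOneTrafo} — a relation the paper only records separately via \eqref{eq:phaseConditionROMpsiRes} and the identity $\PhaseCondition_{\mathrm{Res}} = \PhaseCondition_{\mathrm{freeze}} - \rom{\PhaseCondition}_{\mathrm{freeze}}$. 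Your added convexity observation (both summands are convex in $\dot{\Path}$ since the constraint is affine) is sound and slightly strengthens the statement, though the theorem only asserts the necessary condition.
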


\begin{proof}
	Using \eqref{eq:referenceFrameROMMatrix}, we calculate for $t>0$
	\begin{align*}
		J_{\mathrm{freeze}}(\dot{\Path}(t)) 
		&\vcentcolon= \frac{1}{2} \|\mathscr{R}(t)\|_\X^2 + \frac{1}{2}\|\dot{\FrozenSolROM}(t)\|_2^2 
		= \frac{1}{2} \| \mathscr{R}(t)\|_\X^2 + \frac{1}{2}\left\|\sum_{i=1}^\dimROM\dot{\FrozenSolROM}_i(t)\Mode_i\right\|_\X^2\\
		&= \frac{1}{2}\|\mathscr{R}(t)\|_\X^2 + \frac{1}{2}\left\|\sum_{i=1}^\dimROM \dot{\FrozenSolROM}_i(t)\Mode_i\right\|_\X^2 - \ipX{\mathscr{R}(t), \sum_{i=1}^\dimROM \dot{\FrozenSolROM}_i(t)\Mode_i}\\
		&= \frac{1}{2}\left\| \mathscr{R}(t) - \sum_{i=1}^\dimROM \dot{\FrozenSolROM}_i(t)\Mode_i\right\|_\X^2\\
		&= \frac{1}{2}\left\| \sum_{i=1}^\dimROM \FrozenSolROM_i(t)[\Transformation'(\Path(t))\Mode_i]\dot{\Path}(t) - \F\left(\sum_{j=1}^\dimROM \FrozenSolROM_j(t)\Transformation(\Path(t))\Mode_j\right)\right\|_\X^2,
	\end{align*}
	where the second equality follows from the fact that the Galerkin \gls{ROM} \eqref{eq:referenceFrameROMMatrix} enforces the residual to be orthogonal to $\Span\{\Mode_1,\ldots,\Mode_\dimROM\}$.
	The necessary first-order optimality condition for this minimization problem is obtained by differentiating $J_{\mathrm{freeze}}$ with respect to $\dot{\Path}(t)$ and setting the derivative to zero. 
	The resulting equation is \eqref{eq:phaseConditionROM1} and, thus, the claim follows.
\end{proof}

The different phase conditions together with their associated optimization problems are summarized in \Cref{tab:phaseConditions}. Note that \eqref{eq:ROMcoupled:ROMOneTrafo} implies that \eqref{eq:ROMcoupled:PhaseConstraintOneTrafo} is equivalent to 
\begin{equation}
	\label{eq:phaseConditionROMpsiRes}
	\begin{aligned}
		&D(\SolROM(t))^T\left(\PODmassMatrixPath(\Path(t))-\PODpathMatrix(\Path(t))^T\PODpathMatrix(\Path(t))\right)D(\SolROM(t))\dot{\Path}(t) \\
		&= D(\SolROM(t))^T\left(\FROMpath(\Path(t),\SolROM(t))-\PODpathMatrix(\Path(t))^T\FROMstate(\Path(t),\SolROM(t))\right),
	\end{aligned}
\end{equation}
where we used that $\Transformation(\Path(t))$ is an isometry. In particular, we have $\PhaseCondition_{\mathrm{Res}} = \PhaseCondition_{\mathrm{freeze}} -  \rom{\PhaseCondition}_{\mathrm{freeze}}$.

\begin{table}[ht]
	\centering
	\caption{Phase conditions}
	\label{tab:phaseConditions}
	
	\begin{tabular}{ccc}
		\toprule
		\textbf{Phase condition} & \textbf{Equation} & \textbf{Optimization problem}\\\midrule
		$\PhaseCondition_{\mathrm{Res}} = 0$ & \eqref{eq:ROMcoupled:PhaseConstraintOneTrafo} & $\min_{\dot{\Path}} \frac{1}{2} \|\mathscr{R}\|^2$ \\[.3em]
		$\PhaseCondition_{\mathrm{freeze}} = 0$ & \eqref{eq:phaseConditionROM1} &  $\min_{\dot{\Path}} \frac{1}{2}\|\mathscr{R}\|^2 + \frac{1}{2}\|\dot{\FrozenSolROM}\|^2$ \\[.3em]
		$\rom{\PhaseCondition}_{\mathrm{freeze}} = 0$ & \eqref{eq:phaseConditionROM2} & $\min_{\dot{\Path}} \frac{1}{2}\|\dot{\FrozenSolROM}\|^2$\\\bottomrule
	\end{tabular}
\end{table}

\section{Numerical Examples}
\label{sec:examples}

In this section we illustrate the \gls{ROM} \eqref{eq:ROMcoupled} derived in \cref{sec:onlinePhase} for several test cases. In \cref{subsec:advectionDiffusion}, we discuss the one-dimensional advection-diffusion equation with periodic boundary condition and illustrate the advantages of a restriction of the transformations as outlined in \Cref{rem:issueMultiframeApproach}. A numerical study with non-periodic boundary conditions is performed in \cref{subsec:ADE_nonPeriodic}. For an example that exhibits more than one transport we discuss the linear wave equation in \cref{subsec:waveEquation}. We conclude our numerical results with the nonlinear (viscous) Burgers' equation (cf.\ \cref{subsec:burgers}) and demonstrate the advantages in comparison to standard \gls{POD}. For all our examples with periodic boundary conditions we use the shift operator discussed in \Cref{ex:shiftOperator} for $\Transformation_i$. In particular, \Cref{ass:TransformationDifferentiability} is satisfied provided that the data is sufficiently smooth. Moreover, this immediately implies $\PathSpace_i = \mathbb{R}$, i.e., $\dimPathSpace_i = 1$. For the case of non-periodic boundary conditions the transformation operator has to be modified, which is discussed in \cref{subsec:ADE_nonPeriodic}.

As is common in the model-order reduction literature, we refer to the solution of the spatially discretized equation \eqref{eq:FOMPDE} as the \emph{truth solution} and, by abuse of notation, denote it by~$\Sol$. 
For all examples except for the one in \cref{subsec:ADE_nonPeriodic}, we use a $6$-th order central finite difference scheme (see for instance \cite{For88}) on a grid with $200$ equidistant points for the spatial discretization.
The stencils for the finite-difference matrices $\partial_{\xi} \approx D_{1}$ and $\partial_{\xi}^{2} \approx D_{2}$ approximating the spatial derivatives are given by
\begin{displaymath}
    \left[ -\frac{1}{60}, \frac{3}{20}, -\frac{3}{4}, 0, \frac{3}{4}, -\frac{3}{20}, \frac{1}{60} \right]\qquad\text{and}\qquad
    \left[ \frac{1}{90}, -\frac{3}{20}, \frac{3}{2}, -\frac{49}{18}, \frac{3}{2}, -\frac{3}{20}, \frac{1}{90} \right],
\end{displaymath}
respectively. 
For the time integration, we use the implicit trapezoidal rule with constant stepsize of $\tau=\num{5e-3}$ and we use the \textsc{Matlab} function \texttt{fsolve} with standard tolerances to solve the resulting implicit system of equations. 
The $\LTwo{0,1; \X}$ norm is approximated with the trapezoidal rule, and for a truth solution $\Sol$ with approximation $\SolROMlifted$ we denote by
\begin{displaymath}
    e_{\rel} \vcentcolon= \frac{\Vert \Sol - \SolROMlifted \Vert_{\LTwo{0,1; \X}}}{\Vert \Sol \Vert_{\LTwo{0,1; \X}}}
\end{displaymath}
the relative error. 
The computation of the dominant modes for our approach, i.e., the solution of the minimization problem \eqref{eq:sPODminimization} is performed with the algorithm described in~\cite{ReiSSM18}. 
Consequently, we denote the \gls{ROM} \eqref{eq:ROMcoupled} as \emph{\gls{sPOD}}.

The shift operator's application may require the evaluation of the modes at points not captured by the spatial grid.
In the numerical experiments, we compute these evaluations utilizing polynomial interpolation with cubic Lagrange polynomials.
Furthermore, we also need to evaluate the spatial derivatives of the shifted modes in the online phase, cf.~\Cref{ex:advectionEquationROM}.
To this end, we use the same discretization of the space derivative as in the simulation of the \gls{FOM}.

\paragraph{Code availability} The \textsc{Matlab} source code for the numerical examples can be obtained from the doi \href{https://doi.org/10.5281/zenodo.3902924}{10.5281/zenodo.3902924}.

\subsection{Advection-Diffusion Equation}
\label{subsec:advectionDiffusion}

The one-dimensional advection-diffusion equation with periodic boundary conditions is given by
	\begin{equation}
		\label{eq:advectionDiffusionEquation}
		\left\{\begin{aligned}
			\ParDer{t}{\Sol}(t,\xi) + c \ParDer{\xi}{\Sol}(t,\xi) - \mu \partial_{\xi}^{2} \Sol \left(t,\xi\right) &= 0, & (t,\xi)\in(0,1)\times(0,1),\\
			\Sol(t,0) &= \Sol(t,1), & t\in(0,1),\\
			\Sol(0,\xi) &= \exp\left(-\left(\tfrac{\xi-0.5}{0.1}\right)^2\right), & \xi\in(0,1).
		\end{aligned}\right.
	\end{equation}
The parameters $c$ and $\mu$ denote the transport velocity and the diffusion coefficient, respectively, and are chosen as $c = 1$ and $\mu = \num{0.002}$. 
Choosing $\dimROM = 2$ modes, the algorithm described in \cite{ReiSSM18}, yields a relative offline error of $\num{8.6e-3}$. 

For the online phase (cf.~\cref{sec:onlinePhase}) we compare two different \glspl{ROM}: the first one uses only one path $\Path$ for both modes as described in \eqref{eq:referenceFrameApproach}, i.e., the approximate solution $\SolROMlifted_1$ of \eqref{eq:advectionDiffusionEquation} is given by $\SolROMlifted_1(t) \vcentcolon= \sum_{i=1}^{\dimROM} \Coefficient_i(t)\Transformation(\Path)\Mode_i$. 
The second \gls{ROM} is constructed as proposed in \eqref{eq:ROMcoupled} and the corresponding approximation is denoted by $\SolROMlifted_2$. The relative errors are
\begin{displaymath}
    e_{\rel, \SolROMlifted_1} \approx \num{4.3e-3} \qquad \text{and} \qquad e_{\rel, \SolROMlifted_2} \approx \num{3.6e-2}
\end{displaymath}
and the evolution of the relative error with respect to time is presented in  \Cref{fig:advectionDiffusionEquation:relativeError}. To understand the larger error in the second approximation, we consider the coefficient functions for the second mode for both approaches, which are presented in \Cref{fig:advectionDiffusionEquation:secondCoefficient}. We observe that at time $t \approx 0.45$ the coefficient functions are almost equal to zero. Following the discussion in \Cref{rem:issueMultiframeApproach}, the solution of the \gls{ROM} \eqref{eq:ROMcoupled} in a neighborhood of such a point is ill-conditioned, which results in the weaker approximation quality of $\SolROMlifted_2$.

\begin{figure}
	\centering
	\begin{subfigure}[t]{.45\linewidth}
    	\includegraphics{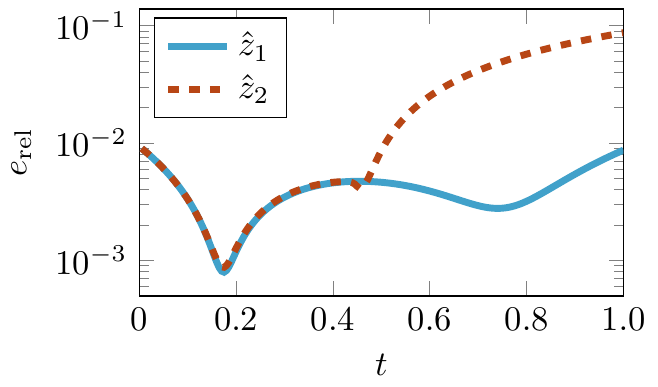}
    	\caption{Time evolution of the relative error for the \glspl{ROM} $\SolROMlifted_1$ and $\SolROMlifted_2$.}
    	\label{fig:advectionDiffusionEquation:relativeError}
    \end{subfigure}\quad
    \begin{subfigure}[t]{.50\linewidth}
    	\includegraphics{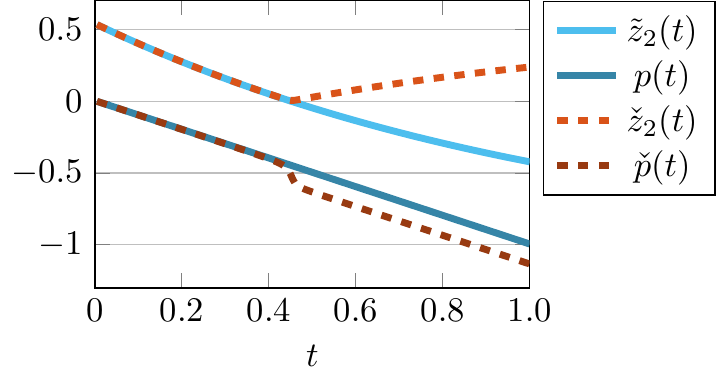}
    	\caption{Evolution of the coefficient for the second mode and the corresponding path for $\SolROMlifted_1$ and $\SolROMlifted_2$.}
    	\label{fig:advectionDiffusionEquation:secondCoefficient}
    \end{subfigure}
    \caption{Advection-diffusion equation -- Comparison of the \gls{ROM} with a single transformation for all modes ($\SolROMlifted_1$, solid blue) and the \gls{ROM} with one transformation per mode ($\SolROMlifted_2$, dashed red).}
    \label{fig:advectionDiffusionEquation}
\end{figure}

The \gls{ROM} is not only able to reproduce the \gls{FOM} with the same set of parameters, but can also accurately predict the behavior of the \gls{FOM} for different parameters. The relative error of the \gls{sPOD} approximation with $\dimROM=2$ for different values of the transport velocity is presented in the left image in \Cref{fig:prediction}. 
Note that \gls{POD} requires $\dimROM=11$ modes to obtain a similar accuracy as the \gls{sPOD} albeit with a larger computational time (cf.~right image in \Cref{fig:prediction}). 
Here, the computational time is the median time of $5$ simulation runs. 
A similar computational time as the \gls{sPOD} is achieved with \gls{POD} with $\dimROM=3$ modes although with a larger relative error. For the comparison of the computational time we use the \textsc{MATLAB} solver \texttt{ode45} (see also the forthcoming discussion in \cref{subsec:waveEquation}) and the \gls{sPOD} approach is implemented utilizing equivariance, see \Cref{rem:pathIndependentROM}. Nevertheless, the numerical simulation code for the \gls{FOM}, \gls{POD}, and \gls{sPOD} are not optimized for performance, which is the main reason for omitting the computational times in the following examples. 
An efficient implementation of our framework is subject to further research.

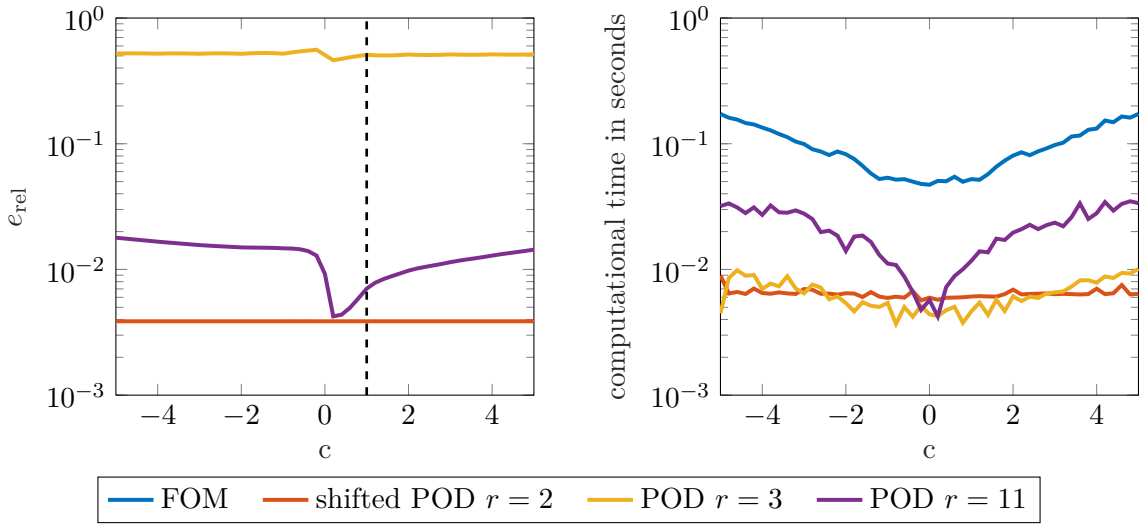
\begin{figure}
	\captionsetup[subfigure]{width=0.9\textwidth}
	\centering
	\newcommand{\tikzLineWidth}{1.5pt}
%
%
\definecolor{mycolor1}{rgb}{0.00000,0.44700,0.74100}%
\definecolor{mycolor2}{rgb}{0.85000,0.32500,0.09800}%
\definecolor{mycolor3}{rgb}{0.92900,0.69400,0.12500}%
\definecolor{mycolor4}{rgb}{0.49400,0.18400,0.55600}%
\begin{tikzpicture}

\begin{axis}[%
width=5.5cm,
height=5cm,
at={(1.733in,1.037in)},
scale only axis,
xmin=-5,
xmax=5,
xlabel style={font=\color{white!15!black}},
xlabel={c},
ymode=log,
ymin=0.001,
ymax=1,
yminorticks=true,
ylabel style={font=\color{white!15!black}},
ylabel={$e_{\rel}$},
axis background/.style={fill=white},
legend style={legend cell align=left, align=left, draw=white!15!black}
]
\addplot [color=mycolor2, line width=\tikzLineWidth]
  table[row sep=crcr]{%
-5	0.00386967847604406\\
-4.8	0.00386966103431156\\
-4.6	0.00386964784785877\\
-4.4	0.00386963529621725\\
-4.2	0.00386963049742059\\
-4	0.00386964119043856\\
-3.8	0.00386966788209981\\
-3.6	0.00386969953776035\\
-3.4	0.0038697133773787\\
-3.2	0.00386970524048692\\
-3	0.00386968779707408\\
-2.8	0.00386969069471797\\
-2.6	0.00386971250853129\\
-2.4	0.00386972907431153\\
-2.2	0.00386975260785472\\
-2	0.00386976957075442\\
-1.8	0.00386978723131165\\
-1.6	0.00386973747845119\\
-1.4	0.00386964357177608\\
-1.2	0.00386962386655272\\
-1	0.00386965225790253\\
-0.8	0.00387045572124401\\
-0.6	0.0038701257003808\\
-0.399999999999999	0.00387027472350457\\
-0.199999999999999	0.00386950182949443\\
0	0.00386949016592376\\
0.199999999999999	0.00386950086562298\\
0.399999999999999	0.00387028256845312\\
0.6	0.00387012576697636\\
0.8	0.00387046180185104\\
1	0.00386965659716536\\
1.2	0.00386962337174073\\
1.4	0.00386964299966729\\
1.6	0.00386973657646748\\
1.8	0.00386978617769173\\
2	0.00386976836274144\\
2.2	0.00386975154447394\\
2.4	0.00386972802477724\\
2.6	0.00386971147757052\\
2.8	0.00386968967715818\\
3	0.00386968676800098\\
3.2	0.00386970419873851\\
3.4	0.00386971233382129\\
3.6	0.00386969849685306\\
3.8	0.00386966683631058\\
4	0.00386964015743337\\
4.2	0.00386962946042555\\
4.4	0.00386963423617974\\
4.6	0.00386964677290076\\
4.8	0.00386965994480861\\
5	0.00386967736038646\\
};

\addplot [color=mycolor3, line width=\tikzLineWidth]
  table[row sep=crcr]{%
-5	0.521749450394667\\
-4.8	0.523306003510603\\
-4.6	0.523752459591373\\
-4.4	0.523141987603984\\
-4.2	0.521964881698926\\
-4	0.520420426112923\\
-3.8	0.522511686076284\\
-3.6	0.523312707576786\\
-3.4	0.52278687296483\\
-3.2	0.521470192072975\\
-3	0.519444954994802\\
-2.8	0.522410539948458\\
-2.6	0.52383084116909\\
-2.4	0.523487034035321\\
-2.2	0.521951058566322\\
-2	0.519042191914825\\
-1.8	0.523846419902207\\
-1.6	0.526749595798606\\
-1.4	0.527047986228229\\
-1.2	0.525256392788406\\
-1	0.520254354848909\\
-0.8	0.531703300812305\\
-0.6	0.542441022201044\\
-0.399999999999999	0.550896691952164\\
-0.199999999999999	0.559269888888441\\
0	0.50770647596969\\
0.199999999999999	0.461613611624179\\
0.399999999999999	0.473705905101852\\
0.6	0.487294610704181\\
0.8	0.498968521174149\\
1	0.510603249838266\\
1.2	0.504987755046588\\
1.4	0.503094936893621\\
1.6	0.504540706044411\\
1.8	0.507642834269075\\
2	0.512574427003936\\
2.2	0.509457227044136\\
2.4	0.507968675724116\\
2.6	0.508419603107708\\
2.8	0.510103241103494\\
3	0.51332771359839\\
3.2	0.511294959612827\\
3.4	0.510151421773639\\
3.6	0.510334924066261\\
3.8	0.511485522141596\\
4	0.513945602529443\\
4.2	0.512532050779049\\
4.4	0.511633496695581\\
4.6	0.511723064950559\\
4.8	0.512611616282318\\
5	0.514645750586387\\
};

\addplot [color=mycolor4, line width=\tikzLineWidth]
  table[row sep=crcr]{%
-5	0.0178788070891656\\
-4.8	0.0176260376920596\\
-4.6	0.0173753291234193\\
-4.4	0.0171276458659947\\
-4.2	0.0168823634873666\\
-4	0.0166319562960081\\
-3.8	0.0164215182577895\\
-3.6	0.0162169697334891\\
-3.4	0.0160175005114719\\
-3.2	0.0158225734914234\\
-3	0.0156220101791785\\
-2.8	0.0154783386222416\\
-2.6	0.0153463431624339\\
-2.4	0.0152225492721226\\
-2.2	0.0151057131929689\\
-2	0.0149795043826727\\
-1.8	0.0149339254838024\\
-1.6	0.0149045682724666\\
-1.4	0.0148763130527432\\
-1.2	0.0148344128473114\\
-1	0.0147258464225205\\
-0.8	0.0146712420634004\\
-0.6	0.0144887034996947\\
-0.399999999999999	0.0139988331293679\\
-0.199999999999999	0.0128881731673737\\
0	0.00921466695656504\\
0.199999999999999	0.00423458845551911\\
0.399999999999999	0.00436660868013633\\
0.6	0.0049560641423865\\
0.8	0.00587939279662032\\
1	0.00704320291030501\\
1.2	0.0078255167406588\\
1.4	0.00837729662674771\\
1.6	0.00884237861433302\\
1.8	0.00929142389458409\\
2	0.00978349760318817\\
2.2	0.0101712331951821\\
2.4	0.0104791308031152\\
2.6	0.0107690558528379\\
2.8	0.0110733041429464\\
3	0.0114225230660041\\
3.2	0.0117358580419074\\
3.4	0.0120116705646205\\
3.6	0.0122782088152387\\
3.8	0.0125633420873211\\
4	0.0128863505622512\\
4.2	0.0131964013120657\\
4.4	0.0134777158157394\\
4.6	0.0137551363218074\\
4.8	0.0140482921119708\\
5	0.0143740217515739\\
};

\addplot [color=black, dashed, line width=1]
  table[row sep=crcr]{%
1 0.001\\
1 1.0\\
};

\end{axis}
\end{tikzpicture}
%
%
\definecolor{mycolor1}{rgb}{0.00000,0.44700,0.74100}%
\definecolor{mycolor2}{rgb}{0.85000,0.32500,0.09800}%
\definecolor{mycolor3}{rgb}{0.92900,0.69400,0.12500}%
\definecolor{mycolor4}{rgb}{0.49400,0.18400,0.55600}%
\begin{tikzpicture}

\begin{axis}[%
width=5.5cm,
height=5cm,
at={(1.733in,1.068in)},
scale only axis,
xmin=-5,
xmax=5,
xlabel style={font=\color{white!15!black}},
xlabel={c},
ymode=log,
ymin=0.001,
ymax=1,
yminorticks=true,
ylabel style={font=\color{white!15!black}},
ylabel={computational time in seconds},
axis background/.style={fill=white},
legend style={
	at={(0.5,-0.1)},
	anchor=north,
	/tikz/column 2/.style={
                column sep=10pt,
            },
    /tikz/column 4/.style={
                column sep=10pt,
            },
    /tikz/column 6/.style={
                column sep=10pt,
            },
},
legend cell align={left},
legend columns=4,
legend to name=legPrediction, 
]
\addplot [color=mycolor1, line width=\tikzLineWidth]
  table[row sep=crcr]{%
-5	0.172618\\
-4.8	0.161033\\
-4.6	0.155636\\
-4.4	0.146133\\
-4.2	0.142342\\
-4	0.134449\\
-3.8	0.127821\\
-3.6	0.119526\\
-3.4	0.112887\\
-3.2	0.104006\\
-3	0.099496\\
-2.8	0.090383\\
-2.6	0.086727\\
-2.4	0.081257\\
-2.2	0.086764\\
-2	0.082532\\
-1.8	0.075644\\
-1.6	0.06664\\
-1.4	0.058015\\
-1.2	0.052387\\
-1	0.053661\\
-0.8	0.051796\\
-0.6	0.052261\\
-0.399999999999999	0.04997\\
-0.199999999999999	0.047901\\
0	0.047168\\
0.199999999999999	0.0506\\
0.399999999999999	0.050294\\
0.6	0.054641\\
0.8	0.0498\\
1	0.052272\\
1.2	0.051698\\
1.4	0.05728\\
1.6	0.065644\\
1.8	0.073397\\
2	0.08044\\
2.2	0.085467\\
2.4	0.081067\\
2.6	0.086982\\
2.8	0.091806\\
3	0.097732\\
3.2	0.101952\\
3.4	0.11422\\
3.6	0.1163\\
3.8	0.129208\\
4	0.131952\\
4.2	0.153055\\
4.4	0.148311\\
4.6	0.164791\\
4.8	0.161293\\
5	0.172637\\
};
\addlegendentry{\textcolor{black}{FOM}}

\addplot [color=mycolor2, line width=\tikzLineWidth]
  table[row sep=crcr]{%
-5	0.008965\\
-4.8	0.006437\\
-4.6	0.006623\\
-4.4	0.00638\\
-4.2	0.007045\\
-4	0.006495\\
-3.8	0.006408\\
-3.6	0.006569\\
-3.4	0.006447\\
-3.2	0.006385\\
-3	0.006988\\
-2.8	0.006893\\
-2.6	0.006401\\
-2.4	0.006425\\
-2.2	0.006302\\
-2	0.006468\\
-1.8	0.006428\\
-1.6	0.006033\\
-1.4	0.006607\\
-1.2	0.00608\\
-1	0.005933\\
-0.8	0.006072\\
-0.6	0.005962\\
-0.399999999999999	0.006731\\
-0.199999999999999	0.005677\\
0	0.005967\\
0.199999999999999	0.00574\\
0.399999999999999	0.005951\\
0.6	0.005975\\
0.8	0.006005\\
1	0.006089\\
1.2	0.006132\\
1.4	0.006096\\
1.6	0.006075\\
1.8	0.006366\\
2	0.006909\\
2.2	0.006331\\
2.4	0.006385\\
2.6	0.006419\\
2.8	0.006451\\
3	0.006354\\
3.2	0.006397\\
3.4	0.006362\\
3.6	0.006326\\
3.8	0.006364\\
4	0.007029\\
4.2	0.006359\\
4.4	0.006464\\
4.6	0.007517\\
4.8	0.006328\\
5	0.006399\\
};
\addlegendentry{\textcolor{black}{shifted POD $\dimROM=2$}}

\addplot [color=mycolor3, line width=\tikzLineWidth]
  table[row sep=crcr]{%
-5	0.004466\\
-4.8	0.008541\\
-4.6	0.009853\\
-4.4	0.008907\\
-4.2	0.009035\\
-4	0.007005\\
-3.8	0.007725\\
-3.6	0.007337\\
-3.4	0.008835\\
-3.2	0.007047\\
-3	0.006455\\
-2.8	0.007591\\
-2.6	0.00717\\
-2.4	0.005793\\
-2.2	0.006108\\
-2	0.005405\\
-1.8	0.004634\\
-1.6	0.005491\\
-1.4	0.005138\\
-1.2	0.005033\\
-1	0.005416\\
-0.8	0.003685\\
-0.6	0.005023\\
-0.399999999999999	0.004205\\
-0.199999999999999	0.005125\\
0	0.004398\\
0.199999999999999	0.004255\\
0.399999999999999	0.004742\\
0.6	0.005044\\
0.8	0.003763\\
1	0.004635\\
1.2	0.005452\\
1.4	0.004345\\
1.6	0.005738\\
1.8	0.004663\\
2	0.006111\\
2.2	0.005605\\
2.4	0.006062\\
2.6	0.005926\\
2.8	0.006443\\
3	0.006572\\
3.2	0.006684\\
3.4	0.007306\\
3.6	0.008182\\
3.8	0.008207\\
4	0.007851\\
4.2	0.008794\\
4.4	0.008552\\
4.6	0.009388\\
4.8	0.009286\\
5	0.010082\\
};
\addlegendentry{\textcolor{black}{POD $\dimROM=3$}}

\addplot [color=mycolor4, line width=\tikzLineWidth]
  table[row sep=crcr]{%
-5	0.031924\\
-4.8	0.033513\\
-4.6	0.031134\\
-4.4	0.028079\\
-4.2	0.031155\\
-4	0.027166\\
-3.8	0.032272\\
-3.6	0.028499\\
-3.4	0.028247\\
-3.2	0.0295\\
-3	0.027808\\
-2.8	0.025169\\
-2.6	0.01982\\
-2.4	0.020346\\
-2.2	0.018561\\
-2	0.014026\\
-1.8	0.018267\\
-1.6	0.018599\\
-1.4	0.016554\\
-1.2	0.013049\\
-1	0.011134\\
-0.8	0.01085\\
-0.6	0.008759\\
-0.399999999999999	0.006632\\
-0.199999999999999	0.004764\\
0	0.005711\\
0.199999999999999	0.004229\\
0.399999999999999	0.007241\\
0.6	0.008862\\
0.8	0.010042\\
1	0.01169\\
1.2	0.013951\\
1.4	0.013663\\
1.6	0.017583\\
1.8	0.017147\\
2	0.019639\\
2.2	0.020887\\
2.4	0.022663\\
2.6	0.020927\\
2.8	0.022507\\
3	0.023568\\
3.2	0.022041\\
3.4	0.026071\\
3.6	0.033519\\
3.8	0.025213\\
4	0.027987\\
4.2	0.034364\\
4.4	0.029358\\
4.6	0.033226\\
4.8	0.034875\\
5	0.0337\\
};
\addlegendentry{\textcolor{black}{POD $\dimROM=11$}}

\end{axis}
\end{tikzpicture}
    \ref{legPrediction}
    \caption{Advection-diffusion equation -- Approximation quality of the \gls{sPOD} reduced model with $\dimROM=2$ and \gls{POD} reduced model with $\dimROM\in\{3,11\}$ for different values of the transport velocity $c$. The dashed line at $c=1$ denotes the parameter that is used to construct the \glspl{ROM}. Left image: relative online error. Right image: median computational time in seconds.}
    \label{fig:prediction}
\end{figure}

\subsection{Non-Periodic Boundary Conditions}
\label{subsec:ADE_nonPeriodic}
To illustrate that the presented framework is not restricted to problems with periodic boundary conditions, we consider the advection-diffusion equation with Dirichlet-Neumann boundary conditions of the form
\begin{equation*}
	\left\{\begin{aligned}
		\ParDer{t}{\Sol}(t,\xi) + c \ParDer{\xi}{\Sol}(t,\xi) - \mu \partial_{\xi}^{2} \Sol \left(t,\xi\right) &= 0, & (t,\xi)\in(0,1)\times(0,1.5),\\
		\Sol(t,0) &= \frac12\exp\left(-\left(\tfrac{t-0.2}{0.03}\right)^2\right), & t\in(0,1),\\
		\partial_\xi \Sol(t,1) &= 0, & t\in(0,1),\\
		\Sol(0,\xi) &= \frac12\exp\left(-\left(\tfrac{\xi-0.5}{0.02}\right)^2\right), & \xi\in(0,1).
	\end{aligned}\right.
\end{equation*}
In the following we use the values $c=1$ and $\mu = 0.001$.
For the spatial discretization we employ a standard central second-order finite difference scheme with mesh width $\Delta \xi = \num{1.25e-3}$.
The solution of the \gls{FOM} and the solution of the corresponding \gls{ROM} with our framework with $\dimROM=4$ and a single transformation operator for all modes is depicted in \Cref{fig:ADEFOM}.

\begin{figure}
	\centering
	\includegraphics{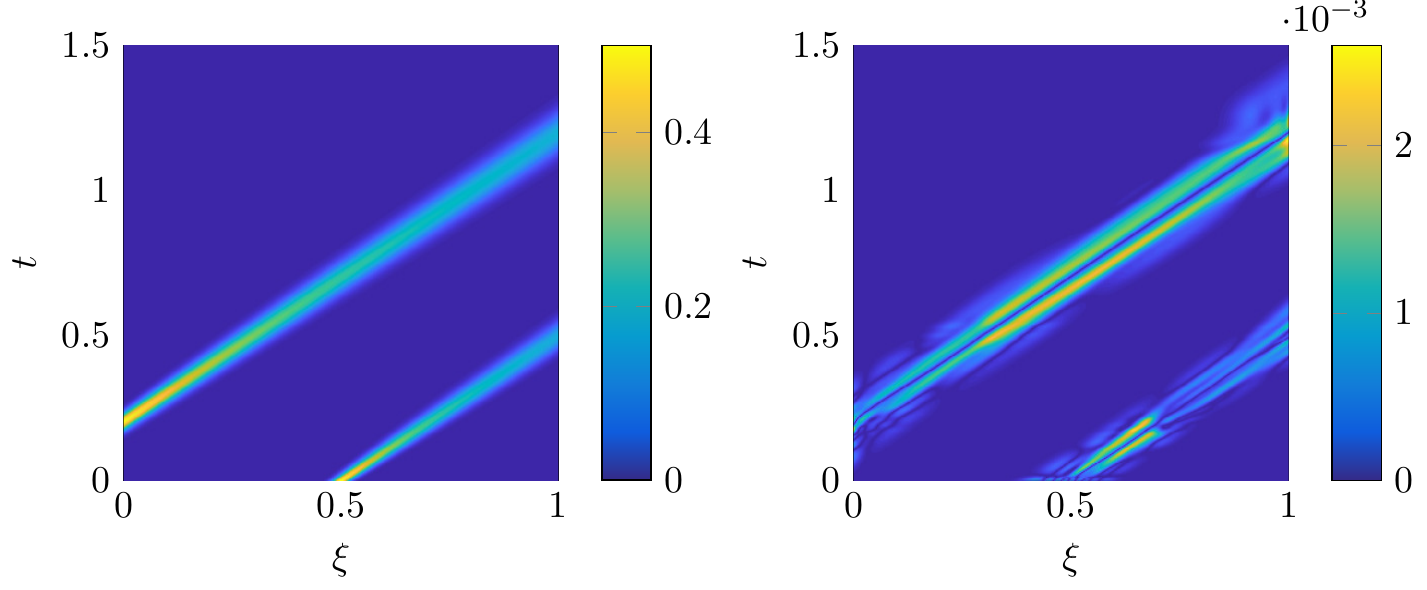}
	\caption{Advection diffusion equation with non-periodic boundary conditions -- \gls{FOM} (left) and absolute error between the \gls{FOM} and the shifted POD \gls{ROM} (right) with $r=4$.}
	\label{fig:ADEFOM}
\end{figure}

Note that a proper treatment of the non-periodic boundary conditions require a modification of the (periodic) shift operator introduced in \Cref{ex:shiftOperator}. To mimic the behavior of the upstream boundary, we propose the following strategy: Instead of using modes on the original domain $\Omega \vcentcolon= [0,1]$, we define the modes on a \emph{virtual} domain $\Omega_{\mathrm{virt}}$ satisfying
\begin{displaymath}
	\{\xi-\Path(t) \mid \xi\in \Omega, t\in [0,T)\} \subseteq \Omega_{\mathrm{virt}}.
\end{displaymath}
The modified shift operator can then be defined as
\begin{equation}
	\label{eq:modifiedShiftOperator}
	\left( \mathcal{T}(\eta)\Mode_i\right)(\cdot) \vcentcolon= \left.\Mode_i(\cdot-\eta)\right|_{\Omega},
\end{equation}
where $\left.\Mode_i(\cdot-\eta)\right|_{\Omega}$ denotes the restriction of $\Mode_i(\cdot-\eta)$ to $\Omega$.
If the path $\Path$ is known, this virtual domain can, for instance, be chosen as
\begin{equation*}	
	\Omega_{\mathrm{virt}}=[-\sup\limits_{t\in [0,T]}\Path(t),\;1-\inf\limits_{t\in [0,T]}\Path(t)].
\end{equation*}
With the parameters defined above we thus obtain $\Omega_{\mathrm{virt}} = [-1.5,1]$. 
At this point, we emphasize that the shift operator~\eqref{eq:modifiedShiftOperator} does formally not fit into the framework presented in this paper. 
In particular, \Cref{ass:offline,ass:TransformationDifferentiability} are not satisfied. 
Nevertheless, the right image in \Cref{fig:ADEFOM} details that our approach with $\dimROM=4$ modes can represent the solution of the \gls{FOM} accurately. It is important to note that the approximation quality of the \gls{sPOD} strongly depends on the accuracy of the \gls{FOM}. If the discretization of the \gls{FOM} is not sufficiently fine, then the online and the offline error of \gls{sPOD} may stagnate. The specific case where the \gls{FOM} is computed with the same spatial and temporal step size is presented in the left plot of \Cref{fig:errorDecay}. In contrast, the approximation with \gls{POD} is less sensitive, albeit with a larger error. A detailed analysis of the impact of the discretization error is subject to further research.

\begin{figure}
	\centering
	\newcommand{\tikzLineWidth}{1.5pt}
%
%
\definecolor{mycolor1}{rgb}{0.00000,0.44700,0.74100}%
\definecolor{mycolor2}{rgb}{0.85000,0.32500,0.09800}%
\definecolor{mycolor3}{rgb}{0.92900,0.69400,0.12500}%
\definecolor{mycolor4}{rgb}{0.49400,0.18400,0.55600}%
\begin{tikzpicture}

\begin{axis}[%
width=3.5cm,
height=5cm,
at={(2.6in,1.239in)},
scale only axis,
xmin=1,
xmax=5,
xlabel style={font=\color{white!15!black}},
xlabel={r},
ymode=log,
ymin=0.0004,
ymax=2,
yminorticks=true,
xtick={1,2,3,4,5},
ylabel style={font=\color{white!15!black}},
ylabel={$e_{\rel}$},
axis background/.style={fill=white},
legend style={
	at={(0.5,-0.1)},
	anchor=north,
	/tikz/column 2/.style={
                column sep=10pt,
            },
},
legend columns=2,
legend cell align={left},
legend to name=legPODDiscretization, 
]


\addplot [color=mycolor1, line width=\tikzLineWidth]
  table[row sep=crcr]{%
1	0.1733937164433\\
2	0.097265400101354\\
3	0.10470033693215\\
4	0.101587155043874\\
5	0.0934144765252241\\
};
\addlegendentry{\textcolor{black}{$\Delta \xi = \Delta t = \num{5.0e-3}$}}

\addplot [color=mycolor2, line width=\tikzLineWidth]
  table[row sep=crcr]{%
1	0.164011983829888\\
2	0.0399181278536028\\
3	0.0307054741286651\\
4	0.0294321934010227\\
5	0.0292226832081255\\
};
\addlegendentry{\textcolor{black}{$\Delta \xi = \Delta t = \num{2.5e-3}$}}

\addplot [color=mycolor3, line width=\tikzLineWidth]
  table[row sep=crcr]{%
1	0.164851840145943\\
2	0.0309552200567041\\
3	0.0118545381552422\\
4	0.00648324625152186\\
5	0.00631515777844174\\
};
\addlegendentry{\textcolor{black}{$\Delta \xi = \Delta t = \num{1.25e-3}$}}

\addplot [color=mycolor4, line width=\tikzLineWidth]
  table[row sep=crcr]{%
1	0.176292836769453\\
2	0.0321653342815226\\
3	0.0113568210336846\\
4	0.00458820012288318\\
5	0.00424445827103984\\
};
\addlegendentry{\textcolor{black}{$\Delta \xi = \Delta t = \num{6.25e-4}$}}


\addplot [color=mycolor1, dashed, line width=\tikzLineWidth]
  table[row sep=crcr]{%
1	0.170049815100066\\
2	0.03861940958073\\
3	0.0121644295594069\\
4	0.00615899421411685\\
5	0.00643046711291943\\
};

\addplot [color=mycolor2, dashed, line width=\tikzLineWidth]
  table[row sep=crcr]{%
1	0.163175136405693\\
2	0.0298693531377606\\
3	0.00709528186321321\\
4	0.00118632433667488\\
5	0.000730337970134829\\
};

\addplot [color=mycolor3, dashed, line width=\tikzLineWidth]
  table[row sep=crcr]{%
1	0.163172570809012\\
2	0.0296030028332028\\
3	0.00687234050519471\\
4	0.00106494850897536\\
5	0.000440999326123829\\
};

\addplot [color=mycolor4, dashed, line width=\tikzLineWidth]
  table[row sep=crcr]{%
1	0.163323227084295\\
2	0.0296605881007445\\
3	0.00689073835791871\\
4	0.0010954071736604\\
5	0.000485847157369381\\
};

\end{axis}
\end{tikzpicture}%
%
%
\definecolor{mycolor1}{rgb}{0.00000,0.44700,0.74100}%
\definecolor{mycolor2}{rgb}{0.85000,0.32500,0.09800}%
\definecolor{mycolor3}{rgb}{0.92900,0.69400,0.12500}%
\definecolor{mycolor4}{rgb}{0.49400,0.18400,0.55600}%
\definecolor{mycolor5}{rgb}{0.46600,0.67400,0.18800}%
\definecolor{mycolor6}{rgb}{0.30100,0.74500,0.93300}%
\definecolor{mycolor7}{rgb}{0.63500,0.07800,0.18400}%
\begin{tikzpicture}

\begin{axis}[%
width=7.5cm,
height=5cm,
at={(2.6in,1.239in)},
scale only axis,
xmin=1,
xmax=35,
xlabel style={font=\color{white!15!black}},
xlabel={r},
ymode=log,
ymin=0.0004,
ymax=2,
yminorticks=true,
xtick={1,5,10,15,20,25,30,35},
ylabel style={font=\color{white!15!black}},
axis background/.style={fill=white},
legend style={legend cell align=left,align=left,draw=white!15!black}
]
\addplot [color=mycolor1,dashed,line width=\tikzLineWidth]
  table[row sep=crcr]{%
1	0.909415380122657\\
2	0.829310286735522\\
3	0.753975597759961\\
4	0.685807350312545\\
5	0.619219035400633\\
6	0.564537327926101\\
7	0.507427237371515\\
8	0.461437614286522\\
9	0.418251806645735\\
10	0.370774598856815\\
11	0.328175579770978\\
12	0.293199746449194\\
13	0.256580344534409\\
14	0.224618889898036\\
15	0.197460477787164\\
16	0.168573482335001\\
17	0.147834838710071\\
18	0.127502016343999\\
19	0.105210835714001\\
20	0.0917113903364226\\
21	0.0764438229946202\\
22	0.0628459578756402\\
23	0.0528807816751659\\
24	0.0429161460070848\\
25	0.0357067544059695\\
26	0.0295066025065821\\
27	0.0228767275031773\\
28	0.0190289117203847\\
29	0.0152801408983398\\
30	0.0117070128187686\\
31	0.00965355730936967\\
32	0.00726335899813329\\
33	0.00579770206992276\\
34	0.00455035842524842\\
35	0.0033029917284139\\
};

\addplot [color=mycolor1,solid,line width=\tikzLineWidth]
  table[row sep=crcr]{%
1	0.96190285368787\\
2	0.97157669088934\\
3	0.827754032246713\\
4	1.02746574494472\\
5	0.693445565870332\\
6	0.731941384246162\\
7	0.576593200089677\\
8	0.554989348196029\\
9	0.601961934455632\\
10	0.482779664700689\\
11	0.414029470057212\\
12	0.44410747626943\\
13	0.371560037154013\\
14	0.29561681809536\\
15	0.310399296924709\\
16	0.251576347809057\\
17	0.224061066124014\\
18	0.240055605591094\\
19	0.15777763632189\\
20	0.169248816815875\\
21	0.129773820647934\\
22	0.0898598624384707\\
23	0.0924091263410172\\
24	0.0691993382080776\\
25	0.0519731776572646\\
26	0.0593165581517272\\
27	0.0344341568889288\\
28	0.0338849816893005\\
29	0.0259421073883007\\
30	0.0166849882653995\\
31	0.0162975936854726\\
32	0.0116814802825311\\
33	0.00766744866508789\\
34	0.00821802689141089\\
35	0.0049840367097226\\
};

\addplot [color=mycolor2,dashed,line width=\tikzLineWidth]
  table[row sep=crcr]{%
1	0.91041432888373\\
2	0.830413549029162\\
3	0.755327274413465\\
4	0.687213049484557\\
5	0.620850033880685\\
6	0.566245019577174\\
7	0.509131747805039\\
8	0.463302782634509\\
9	0.42005429708483\\
10	0.372597677584885\\
11	0.329961326039619\\
12	0.294936463637615\\
13	0.258321468324071\\
14	0.226236647678954\\
15	0.199179873553559\\
16	0.170409989947784\\
17	0.14964753638295\\
18	0.129343115985359\\
19	0.107191697364085\\
20	0.0935735201525664\\
21	0.0784582389466675\\
22	0.0648465941642508\\
23	0.0547910565123253\\
24	0.0447195920430649\\
25	0.0375301188126047\\
26	0.0311784118924273\\
27	0.0244147284029046\\
28	0.0203934854397404\\
29	0.0165166356595484\\
30	0.012912955209081\\
31	0.0107163981047004\\
32	0.00810788604919748\\
33	0.00666713193628053\\
34	0.00526779280363734\\
35	0.00387957301479616\\
};

\addplot [color=mycolor2,solid,line width=\tikzLineWidth]
  table[row sep=crcr]{%
1	0.958135795334978\\
2	0.971681271210583\\
3	0.826340668944206\\
4	1.02490547565869\\
5	0.689788487306831\\
6	0.731531993355914\\
7	0.569077536814598\\
8	0.543346186995404\\
9	0.595816834601666\\
10	0.467023461187599\\
11	0.397419159426973\\
12	0.416771205126263\\
13	0.348620600781625\\
14	0.277578053920266\\
15	0.29559215672161\\
16	0.229842444702444\\
17	0.199431077564544\\
18	0.220013393845836\\
19	0.142711967707022\\
20	0.148180244965856\\
21	0.112992556488889\\
22	0.0834710483552989\\
23	0.0856879522075152\\
24	0.0610445329357527\\
25	0.0497710733379413\\
26	0.0538788345873244\\
27	0.031927677680155\\
28	0.0316601758700189\\
29	0.0226113497283592\\
30	0.0163672405548336\\
31	0.0177793879347858\\
32	0.0105943621498518\\
33	0.00859707130651554\\
34	0.0082867262135975\\
35	0.00487023032331246\\
};

\addplot [color=mycolor3,dashed,line width=\tikzLineWidth]
  table[row sep=crcr]{%
1	0.91085768367668\\
2	0.830858632081634\\
3	0.755840131887677\\
4	0.687709449857894\\
5	0.621406422140489\\
6	0.566816481634193\\
7	0.509671526522443\\
8	0.463873033976162\\
9	0.420581055199293\\
10	0.373128360900897\\
11	0.330427468104809\\
12	0.295369202111756\\
13	0.258751296791437\\
14	0.226556075638889\\
15	0.199522963760888\\
16	0.170780696514923\\
17	0.14995712632414\\
18	0.129644084117887\\
19	0.107542988947012\\
20	0.0938794501309968\\
21	0.0788409895605112\\
22	0.0651813936953892\\
23	0.0551058506206821\\
24	0.0450451776244294\\
25	0.0378403342350466\\
26	0.0314692769876372\\
27	0.0246757363893513\\
28	0.0206319345764415\\
29	0.0167481605310792\\
30	0.0131161914670331\\
31	0.01088072615451\\
32	0.00826432296242198\\
33	0.00680961123913661\\
34	0.00539024108621366\\
35	0.00398683875728283\\
};

\addplot [color=mycolor3,solid,line width=\tikzLineWidth]
  table[row sep=crcr]{%
1	0.951091593913338\\
2	0.968531268233114\\
3	0.821439002052244\\
4	1.01951839007955\\
5	0.681602386907816\\
6	0.728592275719246\\
7	0.55721224433734\\
8	0.525074969017237\\
9	0.584343911083423\\
10	0.444998670418648\\
11	0.376278104265883\\
12	0.389070308152803\\
13	0.319984999916782\\
14	0.263081150801019\\
15	0.283556283455571\\
16	0.212542911984088\\
17	0.185754822698291\\
18	0.207978812118095\\
19	0.13693090209214\\
20	0.139515551777179\\
21	0.107522739097109\\
22	0.0838040154074598\\
23	0.0859218654185885\\
24	0.061943163189415\\
25	0.0504099729914538\\
26	0.0547223114019383\\
27	0.0335949959806593\\
28	0.0331233961538724\\
29	0.0245379563097291\\
30	0.0173700304153012\\
31	0.0185241677435348\\
32	0.0117231544359658\\
33	0.00924439991016118\\
34	0.00896721133995809\\
35	0.00536188341833267\\
};

\addplot [color=mycolor4,dashed,line width=\tikzLineWidth]
  table[row sep=crcr]{%
1	0.911075084113728\\
2	0.831075124999298\\
3	0.756088215396223\\
4	0.687950513919181\\
5	0.621678519946742\\
6	0.5671081882598\\
7	0.509953325957439\\
8	0.464172429340163\\
9	0.420876174943849\\
10	0.373443697428095\\
11	0.330721365337574\\
12	0.295661749309639\\
13	0.259059847221284\\
14	0.226812211546463\\
15	0.199801690041569\\
16	0.171058165612875\\
17	0.150199253229078\\
18	0.129866779412429\\
19	0.107786403037003\\
20	0.094101487599658\\
21	0.0790713917460964\\
22	0.0653833229838999\\
23	0.0552959516596692\\
24	0.0452313754705128\\
25	0.0380022477097394\\
26	0.0316260820252397\\
27	0.024811835290794\\
28	0.020758645176565\\
29	0.0168599775863922\\
30	0.0132137269590915\\
31	0.0109661666443149\\
32	0.00833859044879242\\
33	0.00687576059918203\\
34	0.00544658713162226\\
35	0.00403390546924495\\
};

\addplot [color=mycolor4,solid,line width=\tikzLineWidth]
  table[row sep=crcr]{%
1	0.943056391691711\\
2	0.960395760104761\\
3	0.817883786388785\\
4	1.01143182612718\\
5	0.677281606691862\\
6	0.732530997668582\\
7	0.555777318603815\\
8	0.519142404656802\\
9	0.584082690439309\\
10	0.439405846571939\\
11	0.381810273759914\\
12	0.39754592690773\\
13	0.325663792105553\\
14	0.281438915555871\\
15	0.295770684894168\\
16	0.235525198137146\\
17	0.200441563888915\\
18	0.222598938842025\\
19	0.158369659383044\\
20	0.156086689351049\\
21	0.13283720335107\\
22	0.0966142455623539\\
23	0.0977433243134553\\
24	0.077373800923692\\
25	0.0570204654619089\\
26	0.0665465225029345\\
27	0.0410111045963215\\
28	0.0396086705564296\\
29	0.0323872320672266\\
30	0.0207823541466605\\
31	0.0213259583715062\\
32	0.0153029589272204\\
33	0.0110537160336895\\
34	0.0110843698953056\\
35	0.00677622512414458\\
};

\end{axis}
\end{tikzpicture}
	\ref{legPODDiscretization} 
	\caption{Advection diffusion equation with non-periodic boundary conditions -- Decay of the relative error for the shifted POD \gls{ROM} (left) and the POD \gls{ROM} (right). The solid lines represent the online error, while the dashed lines represent the offline error.}
	\label{fig:errorDecay}
\end{figure}
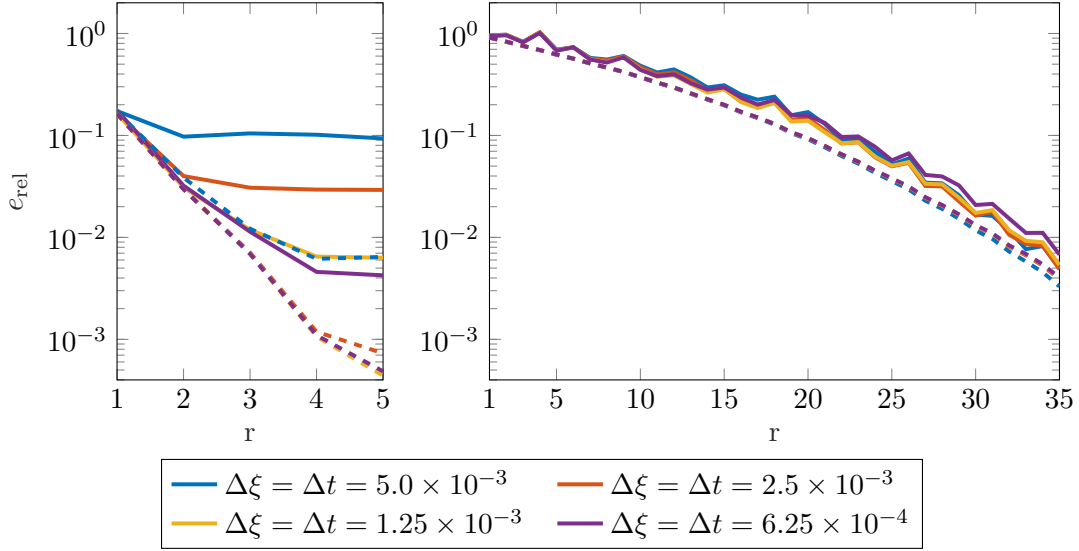	

\subsection{Linear Wave Equation}
\label{subsec:waveEquation}

We consider the one-dimensional acoustic wave equation with periodic boundary conditions as discussed in \Cref{ex:waveEquation}. For the offline phase, we evaluate the analytical solution \eqref{eq:linWaveSolution} on an equally-distributed space-time grid with $200$ points in space and time, respectively. We use the algorithm from \cite{ReiSSM18} to compute $\dimROM=2$ modes with a relative offline approximation error of $\num{7e-16}$. Based on the two dominant modes determined in the offline stage, the \gls{ROM} \eqref{eq:ROMcoupled} yields an online error of $e_{\rel} \approx \num{3.2e-10}$ and thus accurately represents the original equation.

Let us emphasize that our \gls{ROM} not only achieves a considerable reduction in the dimension of the spatial variable, but also benefits the time integration. To detail this, we compare the numerical solution of the \gls{FOM}, a \gls{POD} reduced model and our approach with the \textsc{Matlab} solvers \texttt{ode45} and \texttt{ode23}. Both solvers choose an adaptive step size and the corresponding numbers for the three different models are presented in \Cref{tab:waveEquationTimeSteps} and \Cref{fig:odeSolver}. 
\begin{table}[ht]
	\centering
	\caption{Number of adaptive timesteps for the \gls{FOM}, the \gls{POD} reduced model, and the \gls{sPOD} reduced model}
	\label{tab:waveEquationTimeSteps}
	
	\begin{tabular}{lccc}
		\toprule
		& \gls{FOM} & \gls{POD} & \gls{sPOD} \\\midrule
		\texttt{ode45} & 202 & 51 (\SI{25}{\percent}) & 15 (\SI{7}{\percent}) \\
		\texttt{ode23} & 189 & 194 (\SI{103}{\percent}) & 16 (\SI{8}{\percent}) \\\bottomrule
	\end{tabular}
\end{table}
\begin{figure}
	\captionsetup[subfigure]{width=0.9\textwidth}
	\centering
    \begin{subfigure}{0.45\textwidth}
        \definecolor{myblue}{rgb}{0.30100,0.74500,0.93300}%
\definecolor{myred}{rgb}{0.85000,0.32500,0.09800}%
\definecolor{mygreen}{rgb}{0.09800,0.85000, 0.32500}%
\begin{tikzpicture}
        \begin{axis}[
            xlabel=$t$,
            width = 6.8cm,
            height = 4.8cm,
            xmin = 0,
            xmax = 1,
            ymin = 0,
            ymax = 250,
            ylabel={number of time steps},
			legend entries = {FOM, shifted POD, POD},
			legend pos = north west,
			legend cell align={left},
			legend style={column sep=3pt}
            ]
            
            \addplot+[color=myblue, only marks, mark options={fill=myblue}, mesh/ordering=y varies]
              table[row sep=crcr]{%
0	1\\
2.34296745301183e-05	2\\
0.00014057804718071	3\\
0.000726319910433667	4\\
0.00365502922669845	5\\
0.0182985758080224	6\\
0.0380243751953939	7\\
0.0603569118516186	8\\
0.0741951511126449	9\\
0.0880333903736712	10\\
0.0963430747962767	11\\
0.104652759218882	12\\
0.113200069910291	13\\
0.1210879192762	14\\
0.128631944541123	15\\
0.136503783937339	16\\
0.143812103278498	17\\
0.151303654432105	18\\
0.158257730466903	19\\
0.165457875716229	20\\
0.172335126782281	21\\
0.179372398697541	22\\
0.18608351067289	23\\
0.19285321684096	24\\
0.199801373488182	25\\
0.206283164632595	26\\
0.21303851691158	27\\
0.219719488329683	28\\
0.226062747392603	29\\
0.232731962125451	30\\
0.239280052276379	31\\
0.245659300131262	32\\
0.252145566683731	33\\
0.258657504328329	34\\
0.266297604252848	35\\
0.272476507921227	36\\
0.278961858546461	37\\
0.286002237643461	38\\
0.293042616740461	39\\
0.300519599631059	40\\
0.308635781774798	41\\
0.316751963918538	42\\
0.324825833156915	43\\
0.333084946971379	44\\
0.342404681487127	45\\
0.351047295408205	46\\
0.359689909329282	47\\
0.368219602513591	48\\
0.376307711016645	49\\
0.384363940757639	50\\
0.393096023128906	51\\
0.400882061157675	52\\
0.408668099186444	53\\
0.416467711630223	54\\
0.423996159004309	55\\
0.432040649014777	56\\
0.43899001724828	57\\
0.445939385481784	58\\
0.452896517400531	59\\
0.460443575365851	60\\
0.466846415393488	61\\
0.473249255421124	62\\
0.479646133649235	63\\
0.485130565087384	64\\
0.490614996525533	65\\
0.496451658528039	66\\
0.500525340876191	67\\
0.504599023224342	68\\
0.509589980519093	69\\
0.514969838903639	70\\
0.5205841002853	71\\
0.526740861325346	72\\
0.53325832660179	73\\
0.539413587087312	74\\
0.54575593391476	75\\
0.552098280742208	76\\
0.558643771785314	77\\
0.564941149927239	78\\
0.571238528069164	79\\
0.577928710694308	80\\
0.584114932778536	81\\
0.590301154862764	82\\
0.597058785919989	83\\
0.604072162946646	84\\
0.61006656107042	85\\
0.616331842750818	86\\
0.623101424729318	87\\
0.628731593959663	88\\
0.63483766448397	89\\
0.640565537983144	90\\
0.645875636879603	91\\
0.650779502293467	92\\
0.655683367707332	93\\
0.660431916587598	94\\
0.665180465467864	95\\
0.669676720628999	96\\
0.674172975790135	97\\
0.678592629271885	98\\
0.683012282753634	99\\
0.688162455470161	100\\
0.692487572163397	101\\
0.696982329240772	102\\
0.701088423455996	103\\
0.70521591161875	104\\
0.708682889879324	105\\
0.712149868139898	106\\
0.715850112352783	107\\
0.719550356565667	108\\
0.723097586396091	109\\
0.726240154945931	110\\
0.729382723495772	111\\
0.732424007096681	112\\
0.735625295881972	113\\
0.738826584667262	114\\
0.742012440093052	115\\
0.745566376734474	116\\
0.748558350317707	117\\
0.751546386929463	118\\
0.754940467745236	119\\
0.757985240392548	120\\
0.760919953734317	121\\
0.763869164757368	122\\
0.766898775737482	123\\
0.769762733608781	124\\
0.772768642975562	125\\
0.775704923958332	126\\
0.778611239800412	127\\
0.781678521035178	128\\
0.784534434900302	129\\
0.787474748767915	130\\
0.790391924722437	131\\
0.7932714981379	132\\
0.796276669189934	133\\
0.799147720765215	134\\
0.802125431152955	135\\
0.805014613926733	136\\
0.807915474555853	137\\
0.810912554724385	138\\
0.813781380763196	139\\
0.816776888843615	140\\
0.819673047767486	141\\
0.822590245833747	142\\
0.825547139057891	143\\
0.828432282005649	144\\
0.831433695335654	145\\
0.834315316280378	146\\
0.837256060053218	147\\
0.840212358422677	148\\
0.843101679792475	149\\
0.846114046700319	150\\
0.849006952823278	151\\
0.851960247172008	152\\
0.854890356643117	153\\
0.857796470295356	154\\
0.860848297579002	155\\
0.863734130764473	156\\
0.866708924454355	157\\
0.869648117834785	158\\
0.872560107690408	159\\
0.875592119995512	160\\
0.878491808779681	161\\
0.881468314822221	162\\
0.884391897455914	163\\
0.887318576670886	164\\
0.890357984299303	165\\
0.89325477744458	166\\
0.896247016899707	167\\
0.899182355719339	168\\
0.902115785023046	169\\
0.905130865539729	170\\
0.908041033902803	171\\
0.911040910314651	172\\
0.913964785354663	173\\
0.91691068223196	174\\
0.919938318467552	175\\
0.92284694631966	176\\
0.925852337356077	177\\
0.928790135121817	178\\
0.931742104115314	179\\
0.934752404578226	180\\
0.937673412169376	181\\
0.940692009828013	182\\
0.943621530581047	183\\
0.946582439547039	184\\
0.949610863092485	185\\
0.952530441940939	186\\
0.955545993452585	187\\
0.958491438400819	188\\
0.961455848030483	189\\
0.964473477795565	190\\
0.967404351314625	191\\
0.970432929837764	192\\
0.97337267209438	193\\
0.976342384611744	194\\
0.979383881313634	195\\
0.982312721186325	196\\
0.985333149802107	197\\
0.988291129894004	198\\
0.991260888353798	199\\
0.994298080333055	200\\
0.997237633327454	201\\
1	202\\
};
            \addplot+[color=red, only marks, mark options={fill=red}, mesh/ordering=y varies]
  table[row sep=crcr]{%
0	1\\
0.000201075859380892	2\\
0.00120645515628535	3\\
0.00623335164080764	4\\
0.0313678340634191	5\\
0.131367834063419	6\\
0.231367834063419	7\\
0.331367834063419	8\\
0.431367834063419	9\\
0.531367834063419	10\\
0.631367834063419	11\\
0.731367834063419	12\\
0.831367834063419	13\\
0.931367834063419	14\\
1	15\\
};
            \addplot+[mygreen, color=mygreen, only marks, mark=diamond*, mark options={fill=mygreen}, mesh/ordering=y varies]
             table[row sep=crcr]{%
0	1\\
0.000510566136167275	2\\
0.00306339681700365	3\\
0.0158275502211855	4\\
0.0363311067750301	5\\
0.0588676859995937	6\\
0.0797269176651853	7\\
0.102310853401465	8\\
0.123376182955344	9\\
0.146107974394122	10\\
0.167328943658313	11\\
0.189430464364618	12\\
0.21078832652398	13\\
0.232173636561374	14\\
0.253585786539226	15\\
0.274447268079206	16\\
0.295929299646651	17\\
0.316537336796379	18\\
0.338045381931398	19\\
0.358430487301783	20\\
0.379977082662026	21\\
0.400267560134332	22\\
0.421821678411442	23\\
0.4420154352197	24\\
0.463589848660595	25\\
0.483740498631899	26\\
0.505315909892181	27\\
0.525430285711517	28\\
0.547017689823406	29\\
0.567104834171283	30\\
0.588690185023562	31\\
0.608777287234278	32\\
0.630368702995129	33\\
0.650421691665185	34\\
0.672011885873753	35\\
0.692088759234685	36\\
0.713682424188005	37\\
0.733714569587661	38\\
0.7553071471073	39\\
0.775385933757358	40\\
0.796978664906405	41\\
0.816990122537594	42\\
0.83858523184113	43\\
0.858665861027583	44\\
0.880261414161089	45\\
0.900248023916233	46\\
0.92186379165163	47\\
0.94194677934989	48\\
0.963586032443442	49\\
0.983551374061858	50\\
1	51\\
};
        \end{axis}
    \end{tikzpicture}%
        \caption{\textsc{Matlab} solver \texttt{ode45}}
        \label{fig:ode45}
    \end{subfigure}\qquad
    \begin{subfigure}{0.45\textwidth}
        \input{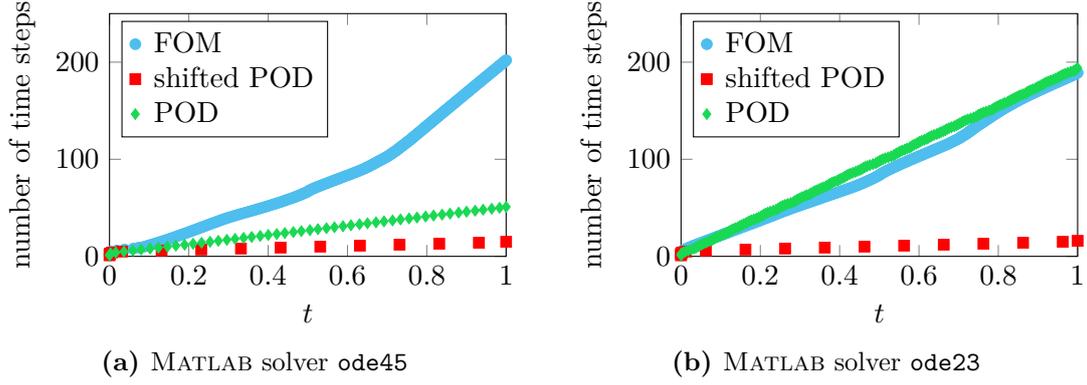}
        \caption{\textsc{Matlab} solver \texttt{ode23}}
        \label{fig:ode23}
    \end{subfigure}
    \caption{Linear wave equation -- Number of time steps chosen by an adaptive time integrator in the computation of the \gls{FOM} (blue circles), the \gls{sPOD} reduced model (red squares), and \gls{POD} reduced model (green diamonds).}
    \label{fig:odeSolver}
\end{figure}
Clearly, our approach requires considerably less time steps for the numerical integration compared to the \gls{FOM} and the \gls{POD} reduced model. The main reason for this behavior is the fact that the reduced state $\SolROM$ in our framework is changing only slightly, i.e., $\|\dot{\SolROM}(t)\|_2$ is small. 
In view of the different phase conditions (cf.~\Cref{tab:phaseConditions}) this is interesting, since our \gls{ROM} is based on the minimization of the residual.

\subsection{Burgers' Equation}
\label{subsec:burgers}
We consider the one-dimensional (viscous) Burgers' equation with periodic boundary conditions, given by
\begin{equation} 
	\label{eq:burgersEquation}
    \left\{
        \begin{aligned}
            \ParDer{t}{\Sol} \left( t, \xi \right) &= \mu \partial_{\xi}^{2} \Sol \left( t, \xi \right) - \Sol \left( t, \xi \right)\ParDer{\xi}{\Sol} \left( t, \xi \right), & \left( t, \xi \right) \in \left( 0, 1 \right) \times \left( 0, 1 \right), \\
            \Sol \left( t, 0 \right) &= \Sol \left( t, 1 \right), & t \in \left( 0, 1 \right), \\
            \Sol \left( 0, \xi \right) &= \exp \left( \left( \tfrac{\xi - 0.5}{0.1} \right)^2 \right), & \xi \in \left( 0, 1 \right),
        \end{aligned}
    \right.
\end{equation}
as a nonlinear test case for the methodology presented within this paper. For our experiments we use the viscosity parameter $\mu = \num{2e-3}$.
The solution is presented in \Cref{fig:burgers:FOM}.
As for the other examples, we use the shift operator $\Transformation \left( \Path \right) \Sol = \Sol \left( \cdot - \Path \right)$ from \Cref{ex:advectionEquationSPOD} as transformation operator, and transform each mode with the same transformation, i.e., we use the approximation \eqref{eq:referenceFrameApproach}. 
Let us emphasize that the right-hand side of Burgers' equation is equivariant with respect to the shift operator (cf.~\Cref{ass:equivarianceGroupAction}) and hence the \gls{ROM} simplifies as described in \Cref{rem:pathIndependentROM}. Note that we can exploit the polynomial form of the nonlinear right-hand side to eliminate the dependency of the original space by computing the reduced right-hand side via
{\small
\begin{multline*}
    \FROMstate(\SolROM) = -
    \begin{bmatrix}
        \ipX{ \Mode_1, \Mode_1 \ParDer{\xi}{\Mode_1} } &
        \ldots &
        \ipX{ \Mode_1, \Mode_1 \ParDer{\xi}{\Mode_{\dimROM}} } &
        \ipX{ \Mode_1, \Mode_2 \ParDer{\xi}{\Mode_1} } &
        \ldots &
        \ipX{ \Mode_1, \Mode_{\dimROM} \ParDer{\xi}{\Mode_{\dimROM}} } \\
        \vdots &
        &
        \vdots &
        \vdots &
        &
        \vdots \\
        \ipX{ \Mode_{\dimROM}, \Mode_1 \ParDer{\xi}{\Mode_1} } &
        \ldots &
        \ipX{ \Mode_{\dimROM}, \Mode_1 \ParDer{\xi}{\Mode_{\dimROM}} } &
        \ipX{ \Mode_{\dimROM}, \Mode_2 \ParDer{\xi}{\Mode_1} } &
        \ldots &
        \ipX{ \Mode_{\dimROM}, \Mode_{\dimROM} \ParDer{\xi}{\Mode_{\dimROM}} }
    \end{bmatrix}
    \left( \Coefficient \otimes \Coefficient \right) \\
   - \mu
    \begin{bmatrix}
        \ipX{ \partial_{\xi}\Mode_1, \partial_{\xi} \Mode_1 } &
        \ldots &
        \ipX{ \partial_{\xi}\Mode_1, \partial_{\xi} \Mode_{\dimROM} } \\
        \vdots &
        &
        \vdots \\
        \ipX{ \partial_{\xi}\Mode_{\dimROM}, \partial_{\xi} \Mode_1 } &
        \ldots &
        \ipX{ \partial_{\xi}\Mode_{\dimROM}, \partial_{\xi} \Mode_{\dimROM} } \\
    \end{bmatrix}
    \Coefficient.
\end{multline*}}%
For our \gls{ROM}, we identify $\dimROM=7$ modes with a relative offline error of $\num{4.4e-3}$. We compare our \gls{ROM} with a \gls{POD} reduced model with $\dimROM = 7$ and $\dimROM = 32$. The results are presented in \Cref{tab:burgersEquation} and \Cref{fig:burgers}. Clearly, our approach outperforms \gls{POD} with the same number of modes. To achieve a comparable error, \gls{POD} requires more than four times as many modes as our framework. 
\begin{table}[ht]
	\centering
	\caption{Approximation quality of \glspl{ROM} for Burgers' equation}
	\label{tab:burgersEquation}
	
	\begin{tabular}{lrrr}
		\toprule
		& \gls{sPOD} $\dimROM=7$ & \gls{POD} $\dimROM = 7$ & \gls{POD} $\dimROM = 32$\\\midrule
		relative offline error & \num{4.4e-3} & \num{7.2e-2} & \num{1.7e-3} \\
		relative online error & \num{3.8e-3} & \num{2.1e-1} & \num{3.5e-3}\\\bottomrule
	\end{tabular}
\end{table}
Comparing the error plots in \Cref{fig:burgers} we observe, in agreement with \Cref{ex:advectionEquationPOD}, strong oscillations in the case of the \gls{POD} approximation with $\dimROM=7$ modes, and they remain present even for the approximation with $\dimROM=32$ modes. 
In contrast, the \gls{sPOD} absolute error does not exhibit this behaviour, the error is dominated by the region where the verge of the transport front lies. 
We remark that the shift $\Path \left( t \right)$ computed from the \gls{ROM} is indeed nonlinear (see \Cref{fig:burgersShift}), as is expected, since the transport velocity for the nonlinear test case of the Burgers' equation depends on the solution itself. 

\begin{figure}
	\captionsetup[subfigure]{width=0.9\textwidth}
	\centering
	\begin{subfigure}{.48\linewidth}
		\centering
        \includegraphics{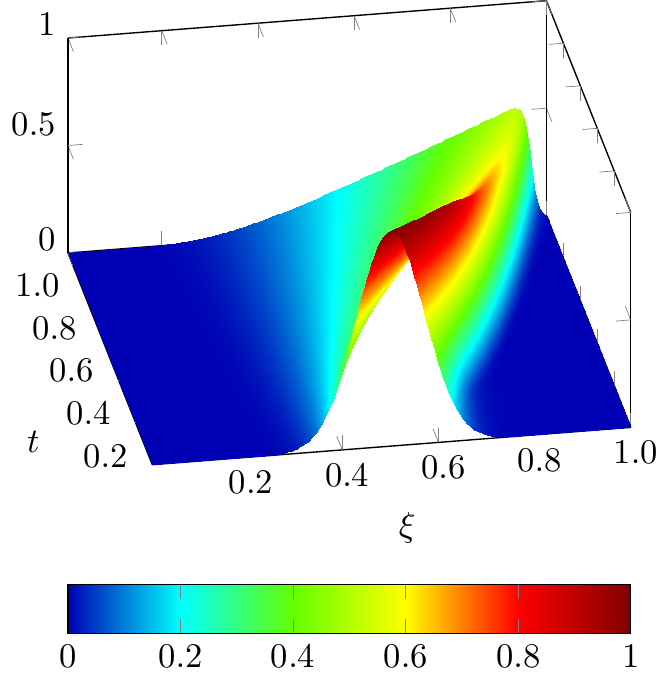}
        \caption{\gls{FOM} with parameter $\mu=\num{2e-3}$ and $200$ degrees of freedom.}
        \label{fig:burgers:FOM}
    \end{subfigure}\quad
    \begin{subfigure}{.48\linewidth}
    	\centering
        \includegraphics{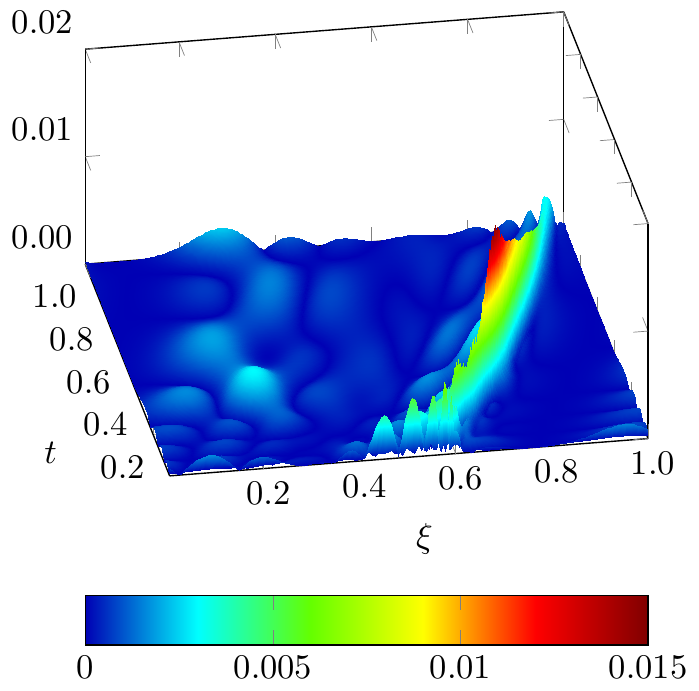}
        \caption{Absolute error for the shifted POD approximation with $\dimROM=7$ modes.}
    \end{subfigure}\\[.7em]
    \begin{subfigure}{.48\linewidth}
    	\centering
        \includegraphics{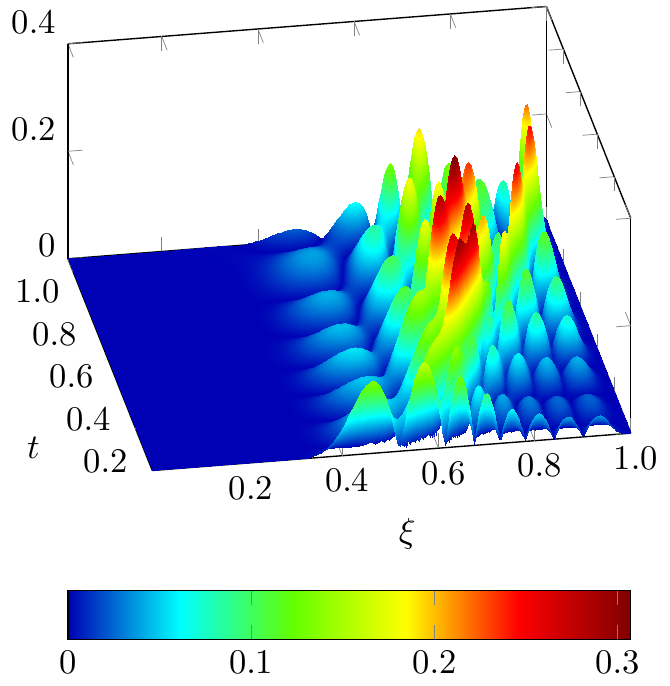}
        \caption{Absolute error for the POD approximation with $\dimROM=7$ modes.}
    \end{subfigure}\quad
    \begin{subfigure}{.48\linewidth}
    	\centering
        \includegraphics{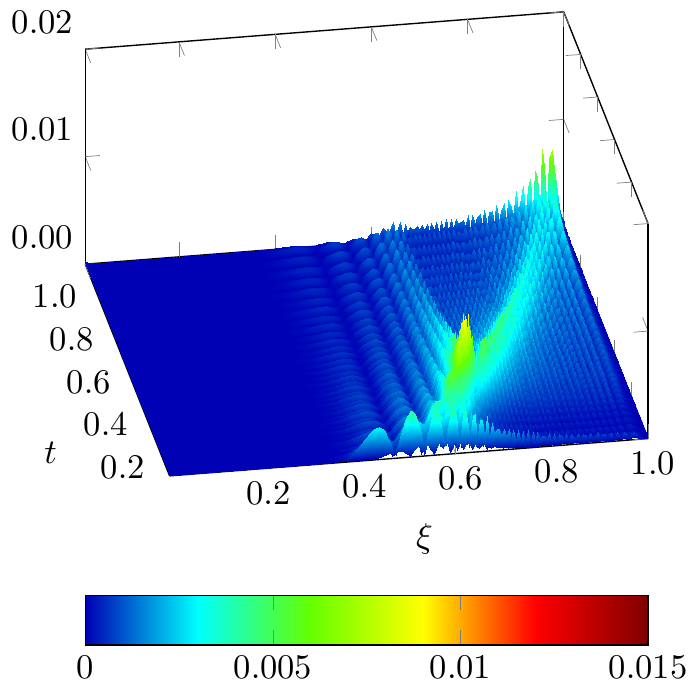}
        \caption{Absolute error for the POD approximation with $\dimROM=32$ modes.}
    \end{subfigure}
    \caption{Burgers' equation -- Solution of the \gls{FOM} and absolute errors for different \glspl{ROM}.}
    \label{fig:burgers}
\end{figure}

\begin{figure}
    \centering
    \includegraphics{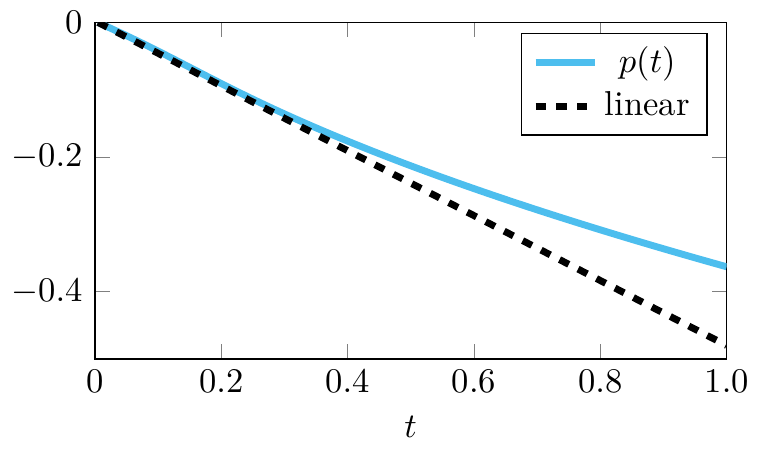}
    \caption{Burgers' equation -- The shift variable is nonlinear.}
    \label{fig:burgersShift}
\end{figure}

\section{Conclusions}

In this paper, we introduce a new framework for constructing reduced order models based on the approximation ansatz \eqref{eq:multiframeAnsatz}, which features multiple time-dependent transformation operators.
This ansatz allows obtaining accurate low-dimensional surrogate models even for systems whose dynamics are dominated by multiple transport modes with potentially large gradients.
The construction of the \gls{ROM} is based on residual minimization and extends the ideas of the moving finite element method to model reduction.
Furthermore, we provide a residual-based a-posteriori error bound.
For the particular case that only one isometric transformation operator is employed, we show a connection between our method and the symmetry reduction framework, c.f. \cite{RowM00, BeyT04}.
Further contributions include a thorough literature review of related approaches and analysis for the identification of optimal basis functions on the infinite-dimensional level.
We illustrate our theoretical findings with several analytical and numerical examples.

The problem of identifying optimal basis functions is currently solved by a first-discretize-then-optimize approach. For future work, it is interesting to analyze the first-optimize-then-discretize approach and compare the results with the current strategy. 
Furthermore, we plan to investigate the efficient implementation of the \gls{ROM}. Notably, the combination with hyper reduction techniques is a promising research direction to obtain an efficient offline/online decomposition.

\subsection*{Acknowledgements} 

We are grateful for helpful discussions with Mathieu Rosi\`ere, Fredi Tröltzsch, and Christoph Zimmer from TU Berlin. 

\bibliographystyle{plain}
\bibliography{literature}    

\vfill
\printglossary[type=acronym,title=List of Abbreviations,toctitle=List of Abbreviations]

\end{document}